\newtheorem{thm}{Theorem}[section]
\newtheorem{lem}[thm]{Lemma}
\newtheorem{prop}[thm]{Proposition}
\theoremstyle{definition}
\newtheorem{defi}[thm]{Definition}
\newtheorem{ex}[thm]{Example}
\newtheorem{rem}[thm]{Remark}
\newtheorem{question}[thm]{Question}
\DeclareFontFamily{T1}{pzc}{} 
\DeclareFontShape{T1}{pzc}{m}{it}{<-> s * [1.15] pzcmi8t}{} 
\DeclareMathAlphabet{\mathpzc}{T1}{pzc}{m}{it} 
\newcommand{\A}{\mathcal{A}}
\newcommand{\B}{\mathcal{B}}
\newcommand{\Th}{\mathrm{Th}}
\newcommand{\M}{\mathpzc{M}}
\newcommand{\C}{\mathcal{C}}
\newcommand{\BB}{\mathscr{B}}
\newcommand{\D}{\mathcal{D}}
\newcommand{\E}{\mathcal{E}}
\newcommand{\hiB}{(\B_i)_{i \geq -1}}
\newcommand{\V}{\mathcal{V}}
\newcommand{\U}{\mathcal{U}}
\newcommand{\hA}{(\A_i)_{i \in \omega}}
\newcommand{\hB}{(\B_i)_{i \in \omega}}
\newcommand{\hC}{(\C_i)_{i \in \omega}}
\newcommand{\Pow}{\mathcal{P}}
\newcommand{\Mw}{{\M_w}}
\newcommand{\bC}{\mathbf{C}}
\renewcommand{\P}{\mathcal{P}}
\newcommand{\gM}{\mathfrak{M}}
\newcommand{\gK}{\mathfrak{K}}
\newcommand{\Mh}{{\M_\omega}}
\newcommand{\Mhw}{{\M_{w\omega}}}
\newcommand{\dom}{\mathrm{dom}}
\renewcommand{\phi}{\varphi}
\newcommand{\conc}{%
  \mathord{
    \mathchoice
    {\raisebox{1ex}{\scalebox{.7}{$\frown$}}}
    {\raisebox{1ex}{\scalebox{.7}{$\frown$}}}
    {\raisebox{.7ex}{\scalebox{.5}{$\frown$}}}
    {\raisebox{.7ex}{\scalebox{.5}{$\frown$}}}
  }
}
\begin{document}

\title{First-order logic in the Medvedev lattice}
\author[R. Kuyper]{Rutger Kuyper}
\address[Rutger Kuyper]{Radboud University Nijmegen\\
Department of Mathematics\\
P.O.\ Box 9010, 6500 GL Nijmegen, the Netherlands.}
\email{mail@rutgerkuyper.com}
\thanks{Research supported by NWO/DIAMANT grant 613.009.011 and by 
John Templeton Foundation grant 15619: `Mind, Mechanism and Mathematics: Turing Centenary Research Project'.}
\subjclass[2010]{03D30, 03B20, 03G30}
\keywords{Medvedev degrees, Intuitionistic logic, First-order logic}
\date{\today}
\maketitle

\begin{abstract}
Kolmogorov introduced an informal calculus of problems in an attempt to provide a classical semantics for intuitionistic logic. This was later formalised by Medvedev and Muchnik as what has come to be called the Medvedev and Muchnik lattices. However, they only formalised this for propositional logic, while Kolmogorov also discussed the universal quantifier. We extend the work of Medvedev to first-order logic, using the notion of a first-order hyperdoctrine from categorical logic, to a structure which we will call the hyperdoctrine of mass problems. We study the intermediate logic that the hyperdoctrine of mass problems gives us, and we study the theories of subintervals of the hyperdoctrine of mass problems in an attempt to obtain an analogue of Skvortsova's result that there is a factor of the Medvedev lattice characterising intuitionistic propositional logic. Finally, we consider Heyting arithmetic in the hyperdoctrine of mass problems and prove an analogue of Tennenbaum's theorem on computable models of arithmetic.
\end{abstract}

\section{Introduction}

In \cite{kolmogorov-1932}, Kolmogorov introduced an interpretation of intuitionistic logic through the use of \emph{problems} (or \emph{Aufgaben}). In this paper, he argued that proving a formula in intuitionistic logic is very much like solving a problem. The exact definition of a problem is kept informal, but he does define the necessary structure on problems corresponding to the logical connectives. His ideas were later formalised by Medvedev \cite{medvedev-1955} as the Medvedev lattice, and a variation of this was introduced by Muchnik \cite{muchnik-1963}.

However, Medvedev and Muchnik only studied propositional logic, while Kolmogorov also briefly discussed the universal quantifier in his paper:

\begin{quote}
``Im allgemeinen bedeutet, wenn $x$ eine Variable (von beliebiger Art) ist und $a(x)$ eine aufgabe bezeichnet, deren Sinn von dem Werte von $x$ abh\"angt, $(x)a(x)$ die Aufgabe ``eine allgemeine Methode f\"ur die L\"osung von $a(x)$ bei jedem einzelnen Wert von $x$ anzugeben''. Man soll dies so verstehen: Die aufgabe $(x)a(x)$ zu l\"osen, bedeutet, imstande sein, f\"ur jeden gegebenen Einzelwert $x_0$ von $x$ die Aufgabe $a(x_0)$ nach einer endlichen Reihe von im voraus (schon vor der Wahl von $x_0$) bekannten Schritten zu l\"osen.''
\end{quote}

In the English translation \cite{kolmogorov-1932-tr} this reads as follows:

\begin{quote}
``In the general case, if $x$ is a variable (of any kind) and $a(x)$ denotes a problem whose meaning depends on the values of $x$, then $(x)a(x)$ denotes the problem ``find a general method for solving the problem $a(x)$ for each specific value of $x$''. This should be understood as follows: the problem $(x)a(x)$ is solved if the problem $a(x_0)$ can be solved for each given specific value of $x_0$ of the variable $x$ by means of a finite number of steps which are fixed in advance (before $x_0$ is set).''
\end{quote}

It is important to note that, when Kolmogorov says that the steps should be fixed before $x_0$ is set, he probably does not mean that we should have one solution that works for every $x_0$; instead, the solution is allowed to depend on $x_0$, but it should do so uniformly. This belief is supported by one of the informal examples of a problem he gives: ``given one solution of $ax^2 + bx + c = 0$, give the other solution''. Of course there is no procedure to transform one solution to the other one which does not depend on the parameters $a$, $b$ and $c$; however, there is one which does so uniformly. More evidence can be found in Kolmogorov's discussion of the law of the excluded middle, where he says that a solution of the problem $\forall a (a \vee \neg a)$, where $a$ quantifies over all problems, should be ``a general method which for any problem $a$ allows one either to find its solution or to derive a contradiction from the existence of such a solution'' and that ``unless the reader considers himself omniscient, he will perhaps agree that [this formula] cannot be in the list of problems that he has solved''. In other words, a solution of $\forall a (a \vee \neg a)$ should be a solution of $a \vee \neg a$ for every problem $a$ which is allowed to depend on $a$, and it should be uniform because we are not omniscient.

In this paper, we will formalise this idea in the spirit of Medvedev. To do this, we will use the notion of a first-order hyperdoctrine from categorical logic, which naturally extends the notion of Brouwer algebras used to give algebraic semantics for propositional intuitionistic logic, to first-order intuitionistic logic. We will give a short overview of the necessary definitions and properties in section \ref{sec-hyperdoc}. After that, in section \ref{sec-omega-med} we will introduce the \emph{degrees of $\omega$-mass problems}, which combine the idea of Medvedev that `solving' should be interpreted as `computing' with the idea of Kolmogorov that `solving' should be uniform in the variables. Using these degrees of $\omega$-mass problems, we will introduce the hyperdoctrine of mass problems in section \ref{sec-med-hyperdoc}. Next, in section \ref{sec-theory} we study the intermediate logic which this hyperdoctrine of mass problems gives us, and we start looking at subintervals of it to try and obtain analogous results to Skvortsova's \cite{skvortsova-1988} remarkable result that intuitionistic propositional logic can be obtained from a factor of the Medvedev lattice. In section \ref{sec-ha} we show that even in these intervals we cannot get every intuitionistic theory, by showing that there is an analogue of Tennenbaum's theorem \cite{tennenbaum-1959} that every computable model of Peano arithmetic is the standard model. Finally, in section \ref{sec-dec} we prove a partial positive result on which theories can be obtained in subintervals of the hyperdoctrine of mass problems, through a characterisation using Kripke models.

Recently, Basu and Simpson \cite{basu-simpson-2014} have independently studied an interpretation of higher-order intuitionistic logic based on the Muchnik lattice. One of the main differences between our approach and their approach is that our approach follows Kolmogorov's philosophy that the interpretation of the universal quantifier should depend uniformly on the variable. On the other hand, in their approach, depending on the view taken either the interpretation does not depend on the quantified variable at all or does so non-uniformly (as we will discuss below in Remark \ref{rem-basu-simpson}). Of course, an important advantage of their approach is that it is suitable for higher-order logic, while we can only deal with first-order logic. Another important difference between our work and theirs is that we start from the Medvedev lattice, while they take the Muchnik lattice as their starting point.

Our notation is mostly standard. We let $\omega$ denote the natural numbers and $\omega^\omega$ the Baire space of functions from $\omega$ to $\omega$. We denote concatenation of strings $\sigma$ and $\tau$ by $\sigma \conc \tau$. For functions $f,g \in \omega^\omega$ we denote by $f \oplus g$ the join of the functions $f$ and $g$, i.e.\ $(f \oplus g)(2n) = f(n)$ and $(f \oplus g)(2n+1) = g(n)$. We let $\langle a_1,\dots,a_n\rangle$ denote a fixed computable bijection between $\omega^n$ and $\omega$. For any set $\A \subseteq \omega^\omega$ we denote by $\overline{\A}$ its complement in $\omega^\omega$. When we say that a set is countable, we include the possibility that it is finite. We denote the join operation in lattices by $\oplus$ and the meet operation in lattices by $\otimes$. A Brouwer algebra is a bounded distributive lattice together with an implication operation $\to$ such that $x \oplus y \geq z$ if and only if $y \geq x \to z$. For unexplained notions from computability theory, we refer to Odifreddi \cite{odifreddi-1989}, for the Muchnik and Medvedev lattices, we refer to the surveys of Sorbi \cite{sorbi-1996} and Hinman \cite{hinman-2012}, for lattice theory, we refer to Balbes and Dwinger \cite{balbes-dwinger-1975}, and finally for unexplained notions about Kripke semantics we refer to Chagrov and Zakharyaschev \cite{chagrov-zakharyaschev-1997} and Troelstra and van Dalen \cite{troelstra-vandalen-1988}.

\section{Categorical semantics for IQC}\label{sec-hyperdoc}

In this section we will discuss the notion of \emph{first-order hyperdoctrine}, as formulated by Pitts \cite{pitts-1989}, based on the important notion of hyperdoctrine introduced by Lawvere \cite{lawvere-1969}. These first-order hyperdoctrines can be used to give sound and complete categorical semantics for IQC (intuitionistic first-order logic). Our notion of first-order logic in the Medvedev lattice will be based on this, so we will discuss the basic definitions and the basic properties before we proceed with our construction. We use the formulation from Pitts \cite{pitts-2002} (but we use Brouwer algebras instead of Heyting algebras, because the Medvedev lattice is normally presented as a Brouwer algebra).

Let us first give the definition of a first-order hyperdoctrine. After that we will discuss an easy example and discuss how first-order hyperdoctrines interpret first-order intuitionistic logic. We will not discuss all details and the full motivation behind this definition, instead referring the reader to the works by Pitts \cite{pitts-1989, pitts-2002}. However, we will discuss some of the motivation behind this definition in Remark \ref{rem-hyp} below.

\begin{defi}{\rm (\cite[Definition 2.1]{pitts-2002})}\label{def-hyperdoctrine}
Let $\bC$ be a category such that for every object $X \in \bC$ and every $n \in \omega$, the $n$-fold product $X^n$ of $X$ exists. A \emph{first-order hyperdoctrine} $\P$ over $\bC$ is a contravariant functor $\P: \bC^{\rm{op}} \to \mathbf{Poset}$ from $\bC$ into the category $\mathbf{Poset}$ of partially ordered sets and order homomorphisms, satisfying:
\begin{enumerate}[\rm (i)]
\item\label{def-hyp-1} For each object $X \in \bC$, the partially ordered set $\P(X)$ is a Brouwer algebra;
\item\label{def-hyp-2} For each morphism $f: X \to Y$ in $\bC$, the order homomorphism $\P(f): \P(Y) \to \P(X)$ is a homomorphism of Brouwer algebras;
\item\label{def-hyp-3} For each diagonal morphism $\Delta_X: X \to X \times X$ in $\bC$ (i.e.\ a morphism such that $\pi_1 \circ \Delta_X = \pi_2 \circ \Delta_X = 1_X$), the right adjoint to $\P(\Delta_X)$ at the bottom element $0 \in \P(X)$ exists. In other words, there is an element ${=_X} \in \P(X \times X)$ such that for all $A \in \P(X \times X)$ we have
\[\P(\Delta_X)(A) \leq 0 \text{ if and only if } A \leq {=_X}.\]
\item\label{def-hyp-4} For each product projection $\pi: \Gamma \times X \to \Gamma$ in $\bC$, the order homomorphism $\P(\pi): \P(\Gamma) \to \P(\Gamma \times X)$ has both a right adjoint $(\exists x)_\Gamma$
and a left adjoint $(\forall x)_\Gamma$, i.e.:
\begin{align*}
\P(\pi)(B) \leq A \text{ if and only if } B \leq (\exists x)_\Gamma(A)\\
A \leq \P(\pi)(B) \text{ if and only if } (\forall x)_\Gamma(A) \leq B.
\end{align*}
Moreover, these adjoints are natural in $\Gamma$, i.e.\ given $s: \Gamma \to \Gamma'$ in $\bC$ we have\label{condiv}
\begin{figure}[H]
\centering
\begin{subfigure}{.45\textwidth}
\xymatrix{\P(\Gamma' \times X)\ar[r]_{\P(s \times 1_X)}\ar[d]_{(\exists x)_{\Gamma'}}&\P(\Gamma \times X)\ar[d]_{(\exists x)_\Gamma}\\
    \P(\Gamma') \ar[r]_{\P(s)}&\P(\Gamma)}
\end{subfigure}
\hspace{0.1\textwidth}
\begin{subfigure}{.45\textwidth}
\xymatrix{\P(\Gamma' \times X)\ar[r]_{\P(s \times 1_X)}\ar[d]_{(\forall x)_{\Gamma'}}&\P(\Gamma \times X)\ar[d]_{(\forall x)_\Gamma}\\
    \P(\Gamma') \ar[r]_{\P(s)}&\P(\Gamma).}
\end{subfigure}
\end{figure}
\noindent This condition is called the \emph{Beck-Chevalley condition}.
\end{enumerate}

We will also denote $P(f)$ by $f^*$.
\end{defi}

\begin{rem}
We emphasise that the adjoints $(\exists x)_\Gamma$ and $(\forall x)_\Gamma$ only need to be order homomorphisms, and that they do no need to preserve the lattice structure. This should not come as a surprise: after all, the universal quantifier does not distribute over logical disjunction, and neither does the existential quantifier distribute over conjunction.
\end{rem}

\begin{ex}{\rm (\cite[Example 2.2]{pitts-2002})}\label{ex-compl-br}
Let $\BB$ be a complete Brouwer algebra. Then $\BB$ induces a first-order hyperdoctrine $\P$ over the category $\mathbf{Set}$ of sets and functions as follows. We let $\P(X)$ be $\BB^X$, which is again a Brouwer algebra under coordinate-wise operations. Furthermore, for each function $f: X \to Y$ we let $\P(f)$ be the function which sends $(B_y)_{y \in Y}$ to the set given by $A_x = B_{f(x)}$. The equality predicates $=_X$ are given by
\begin{align*}
{=_X}(x,z) = \begin{cases}
0 & \text{if } x=z\\
1 & \text{otherwise.}
\end{cases}
\end{align*}
For the adjoints we use the fact that $\BB$ is complete: given $B \in \P(\Gamma \times X)$ we let
\[((\forall x)_\Gamma(B))_\gamma = \bigoplus_{x \in X} B_{(\gamma,x)}\]
and
\[((\exists x)_\Gamma(B))_\gamma = \bigotimes_{x \in X} B_{(\gamma,x)}.\]
Then $\P$ is directly verified to be a first-order hyperdoctrine.
\end{ex}

\begin{rem}\label{rem-basu-simpson}
A special case of Example \ref{ex-compl-br} is when we take $\BB$ to be the Muchnik lattice. In that case we obtain a fragment of the first-order part of the structure studied by Basu and Simpson \cite{basu-simpson-2014} mentioned in the introduction. Let us consider $\Gamma = \{\emptyset\}$ and $X = \omega$. Thus, if we have a sequence of problems $\B_{(\emptyset,0)},\B_{(\emptyset,1)},\dots$ (which we will write as $\B_0,\B_1,\dots$), we have
\[(\forall x)_\Gamma((\B_i)_{i \in \omega}) = \bigoplus_{i \in \omega} \B_i = \left\{f \in \omega^\omega \mid \forall i \in \omega \exists g \in \B_i (f \geq_T g)\right\},\]
in other words a solution of the problem $\forall x (\B(x))$ computes a solution of every $\B_i$ but does so non-uniformly.

If, as in \cite{basu-simpson-2014}, we take each $\B_i$ to be the canonical representative of its Muchnik degree, i.e.\ we take $\B_i$ to be upwards closed under Turing reducibility, then we have that
\[(\forall x)_\Gamma((\B_i)_{i \in \omega}) = \bigoplus_{i \in \omega} \B_i = \bigcap_{i \in \omega} \B_i,\]
i.e.\ a solution of the problem $\forall x (\B(x))$ is a single solution that solves every $\B_i$. Thus, depending on the view one has on the Muchnik lattice, either the solution is allowed to depend on $x$ but non-uniformly, or it is not allowed to depend on $x$ at all.
\end{rem}

Next, let us discuss how first-order intuitionistic logic can be interpreted in first-order hyperdoctrines. Most of the literature on this subject deals with multi-sorted first-order logic; however, to keep the notation easy and because we do not intend to discuss multi-sorted logic in our particular application, we will give the definition only for single-sorted first-order logic.

\begin{defi}{\rm (Pitts \cite[p.\ B2]{pitts-1989})}
Let $\P$ be a first-order hyperdoctrine over $\bC$ and let $\Sigma$ be a first-order language. Then a \emph{structure} $\gM$ for
$\Sigma$ in $\P$ consists of:
\begin{enumerate}[\rm (i)]
\item an object $M \in \bC$ (the universe),
\item a morphism $\llbracket f \rrbracket_\gM : M^n \to M$ in $\bC$ for every $n$-ary function symbol $f$ in $\Sigma$,
\item an element $\llbracket R \rrbracket_\gM \in \P(M^n)$ for every $n$-ary relation in $\Sigma$.\label{def-mod-case-3}
\end{enumerate}
\end{defi}

Case \eqref{def-mod-case-3} is probably the most interesting part of this definition, since it says that elements of $\P(M^n)$ should be seen as generalised $n$-ary predicates on $M$.

\begin{defi}{\rm (\cite[Table 6.4]{pitts-1989})}
Let $t$ be a first-order term in a language $\Sigma$ and let $\gM$ be a structure in a first-order hyperdoctrine $\P$. Let $\vec{x} = (x_1,\dots,x_n)$ be a context (i.e.\ an ordered list of distinct variables) containing all free variables in $t$.
Then we define the interpretation $\llbracket t(\vec{x}) \rrbracket_\gM \in M^n \to M$ inductively as follows:
\begin{enumerate}[\rm (i)]
\item If $t$ is a variable $x_i$, then $\llbracket t(\vec{x}) \rrbracket_\gM$ is the projection of $M^n$ to the $i$th coordinate.
\item If $t$ is $f(s_1,\dots,s_m)$ for $f$ in $\Sigma$, then $\llbracket t(\vec{x}) \rrbracket_\gM$ is $\llbracket f \rrbracket_\gM \circ (\llbracket s_1(\vec{x}) \rrbracket_\gM,\dots,\llbracket s_m(\vec{x}) \rrbracket_\gM)$.
\end{enumerate}
\end{defi}

Thus, we identify a term with the function mapping a valuation of the variables occurring in the term to the value of the term when evaluated at that valuation.

\begin{defi}{\rm (\cite[Table 8.2]{pitts-1989})}
Let $\phi$ be a first-order formula in a language $\Sigma$ and let $\gM$ be a structure in a first-order hyperdoctrine $\P$. Let $\vec{x} = (x_1,\dots,x_n)$ be a context (i.e.\ an ordered list of distinct variables) containing all free variables in $\phi$.
Then we define the interpretation $\llbracket \phi(\vec{x}) \rrbracket_\gM \in \P(M^n)$ (relative to the context $\vec{x}$) inductively as follows:
\begin{enumerate}[\rm (i)]
\item If $\phi$ is $R(t_1,\dots,t_m)$, then $\llbracket \phi(\vec{x}) \rrbracket_\gM$ is $(\llbracket t_1(\vec{x}) \rrbracket_\gM,\dots,\llbracket t_m(\vec{x}) \rrbracket_\gM)^*(\llbracket R \rrbracket_\gM)$.\label{hyp-int-case-1}
\item If $\phi$ is $t_1 = t_2$, then $\llbracket \phi(\vec{x}) \rrbracket_\gM$ is defined as $(\llbracket t_1(\vec{x}) \rrbracket_\gM,\llbracket t_2(\vec{x}) \rrbracket_\gM)^*(=_M)$.\label{hyp-int-case-2}
\item If $\phi$ is $\top$, then $\llbracket \phi(\vec{x}) \rrbracket_\gM$ is defined as $0 \in \P(M^n)$; i.e.\ the smallest element of $\P(M^n)$.
\item If $\phi$ is $\bot$, then $\llbracket \phi(\vec{x}) \rrbracket_\gM$ is defined as $1 \in \P(M^n)$; i.e.\ the largest element of $\P(M^n)$.
\item If $\phi$ is $\psi \vee \theta$, then $\llbracket \phi(\vec{x}) \rrbracket_\gM$ is defined as $\llbracket \psi(\vec{x}) \rrbracket_\gM \otimes \llbracket \theta(\vec{x}) \rrbracket_\gM$.
\item If $\phi$ is $\psi \wedge \theta$, then $\llbracket \phi(\vec{x}) \rrbracket_\gM$ is defined as $\llbracket \psi(\vec{x}) \rrbracket_\gM \oplus \llbracket \theta(\vec{x}) \rrbracket_\gM$.
\item If $\phi$ is $\psi \to \theta$, then $\llbracket \phi(\vec{x}) \rrbracket_\gM$ is defined as $\llbracket \psi(\vec{x}) \rrbracket_\gM \to \llbracket \theta(\vec{x}) \rrbracket_\gM$.
\item If $\phi$ is $\exists y.\psi$, then $\llbracket \phi(\vec{x}) \rrbracket_\gM$ is defined as $(\exists y)_{M^n}(\llbracket \psi(\vec{x},y) \rrbracket_\gM)$.
\item If $\phi$ is $\forall y.\psi$, then $\llbracket \phi(\vec{x}) \rrbracket_\gM$ is defined as $(\forall y)_{M^n}(\llbracket \psi(\vec{x},y) \rrbracket_\gM)$.
\end{enumerate}
\end{defi}

\begin{defi}{\rm (\cite[Definition 8.4]{pitts-1989})}
Let $\phi$ be a formula in a language $\Sigma$ and a context $\vec{x} = (x_1,\dots,x_n)$, and let $\gM$ be a structure in a first-order hyperdoctrine $\P$. Then we say that $\phi(\vec{x})$ is \emph{satisfied} if $\llbracket \phi(\vec{x}) \rrbracket_\gM = 0$ in $\P(M^n)$. We let the \emph{theory} of $\gM$ be the set of sentences which are satisfied in the empty context, i.e.\ those sentences $\phi$ for which $\phi(\emptyset)$ is satisfied, where $\emptyset$ is the empty sequence. We denote the theory by $\mathrm{\Th}(\gM)$. Given a language $\Sigma$, we let the \emph{theory} of $\P$ be the intersection of the theories of all structures $\gM$ for $\Sigma$ in $\P$, and we denote this theory by $\mathrm{Th}(\P)$.
\end{defi}

\begin{rem}\label{rem-hyp}
Let us make some remarks on the definitions given above.
\begin{itemize}
\item As mentioned above, we identify terms $t(\vec{x})$ with functions $\llbracket t(\vec{x}) \rrbracket_\gM$, and $m$-ary predicates $R(y_1,\dots,y_m)$ are elements of $\P(M^n)$. Since we required our category $\bC$ to contain $n$-fold products, if we have terms $t_1,\dots,t_m$, then $(\llbracket t_1(\vec{x})\rrbracket_\gM,\dots,\llbracket t_m(\vec{x})\rrbracket_\gM): M^n \to M^m$, so $(\llbracket t_1(\vec{x})\rrbracket_\gM,\dots,\llbracket t_m(\vec{x})\rrbracket_\gM)^*: \P(M^m) \to \P(M^n)$. This should be seen as the \emph{substitution of $t_1(\vec{x}),\dots,t_m(\vec{x})$ for $y_1,\dots,y_m$}, which explains case \eqref{hyp-int-case-1} and \eqref{hyp-int-case-2}.
\item Quantifiers are interpreted as adjoints, which is an idea due to Lawvere. For example, for the universal quantifier this says that
\[\llbracket\psi\rrbracket_\gM \geq \llbracket \forall x \phi(x) \rrbracket_\gM \Leftrightarrow \llbracket \psi(x) \rrbracket_\gM \geq \llbracket \phi(x) \rrbracket_\gM,\]
where we assume $x$ does not occur freely in $\psi$. Reading $\geq$ as $\vdash$, the two implications are essentially the introduction and elimination rules for the universal quantifier.
\item The Beck-Chevalley condition is necessary to ensure that substitutions commute with the quantifiers (modulo restrictions on bound variables).
\end{itemize}
\end{rem}

Let us introduce a notational convention: \emph{when the structure is clear from the context, we will omit the subscript $\gM$ in $\llbracket - \rrbracket_\gM$}. Having finished giving the definition of first-order hyperdoctrines, let us just mention that they are sound and complete for intuitionistic first-order logic IQC.

\begin{prop}{\rm (\cite[Proposition 8.8]{pitts-1989})}\label{prop-sound}
Structures in first-order hyperdoctrines are \emph{sound} for IQC, i.e.\ the deductive closure of $\mathrm{\Th}(\gM)$ in IQC is equal to $\mathrm{\Th}(\gM)$.
\end{prop}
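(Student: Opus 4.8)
The plan is to prove the standard \emph{soundness lemma} by induction on derivations in a natural-deduction calculus for IQC formulated with sequents-in-context $\vec{x} \mid \Gamma \vdash \psi$, where $\vec{x} = (x_1,\dots,x_n)$ is a context containing all free variables of $\Gamma \cup \{\psi\}$. The assertion to be proved is: whenever $\vec{x} \mid \Gamma \vdash \psi$ is derivable, then $\bigoplus_{\phi \in \Gamma} \llbracket \phi(\vec{x}) \rrbracket_{\gM} \geq \llbracket \psi(\vec{x}) \rrbracket_{\gM}$ in $\P(M^n)$, where an empty join is read as $0 \in \P(M^n)$. Here $\geq$ in the Brouwer-algebra order plays the role of $\vdash$, and an interpretation is ``satisfied'' exactly when it equals the bottom element $0$; so once the lemma is established the proposition follows at once. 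Indeed, if $T := \Th(\gM)$ and $T \vdash_{\mathrm{IQC}} \psi$, then since IQC-derivations are finite there is a finite $\Gamma = \{\phi_1,\dots,\phi_k\} \subseteq T$ with $\emptyset \mid \Gamma \vdash \psi$ derivable (all the formulas involved are sentences, so the context is empty); as $\llbracket \phi_i \rrbracket = 0$ for each $i$, the lemma yields $0 = \bigoplus_i \llbracket \phi_i \rrbracket \geq \llbracket \psi \rrbracket$ in $\P(M^0)$, hence $\llbracket \psi \rrbracket = 0$ and $\psi \in \Th(\gM)$. The inclusion of $\Th(\gM)$ in its deductive closure is trivial, so this gives equality.

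Two preliminary facts about the interpretation are needed, each proved by a routine induction on the structure of the formula. First, a \emph{weakening lemma}: if $(\vec{x}, y)$ extends the context $\vec{x}$ by a fresh variable $y$ and $\pi : M^n \times M \to M^n$ is the projection, then $\llbracket \phi(\vec{x}, y) \rrbracket = \P(\pi)(\llbracket \phi(\vec{x}) \rrbracket)$ for every $\phi$ in which $y$ does not occur free. The atomic cases use functoriality of $\P$; the propositional cases use that $\P(\pi)$ is a homomorphism of Brouwer algebras (clause (ii) of Definition~\ref{def-hyperdoctrine}); and the quantifier cases use the Beck--Chevalley condition (clause (iv)), which states precisely that the adjoints $(\exists y)$ and $(\forall y)$ commute with the reindexing maps $\P(s \times 1_X)$. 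Second, a \emph{substitution lemma}: for a term $t$, writing $\langle 1_{M^n}, \llbracket t(\vec{x}) \rrbracket \rangle : M^n \to M^n \times M$ for the induced map, one has $\llbracket \phi(t/y) \rrbracket = \langle 1_{M^n}, \llbracket t \rrbracket \rangle^{*}(\llbracket \phi(\vec{x}, y) \rrbracket)$; again the connective cases are functoriality together with clause (ii), and the quantifier cases are Beck--Chevalley.

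With these in hand the induction on derivations is a rule-by-rule check in which each logical rule corresponds to one of the defining (in)equalities of a first-order hyperdoctrine. The structural rules (weakening, contraction and exchange of hypotheses) and the rules for $\wedge$, $\vee$, $\bot$, $\top$ reduce to the bounded-distributive-lattice structure of $\P(M^n)$ --- for instance, $\vee$-elimination is an application of distributivity, and $\bot$-elimination uses that $\llbracket \bot \rrbracket = 1$ is the top element --- while weakening the context is exactly the weakening lemma. The rules for $\to$ are nothing but the adjunction $x \oplus y \geq z \iff y \geq x \to z$ defining the Brouwer implication. For $\forall$-introduction (discharging a variable $y$ not free in $\Gamma$) one uses the weakening lemma to rewrite $\bigoplus \llbracket \Gamma \rrbracket$ over the extended context as $\P(\pi)(\bigoplus \llbracket \Gamma \rrbracket)$, and then the adjunction $A \leq \P(\pi)(B) \iff (\forall y)(A) \leq B$ of clause (iv); $\forall$-elimination follows by applying clause (iv), functoriality of $\P$ (via $\pi \circ \langle 1_{M^n}, \llbracket t \rrbracket \rangle = 1_{M^n}$) and the substitution lemma. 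The two $\exists$-rules are handled dually using the right adjoint $(\exists y)$ (for $\exists$-elimination one also invokes clause (ii) to push $\P(\pi)$ through an implication). Finally the equality rules --- reflexivity of $=$ and Leibniz replacement --- come from clause (iii): reflexivity is the identity $\P(\Delta_M)({=_M}) = 0$, and replacement uses the characterisation $A \leq {=_M}$ together with the substitution lemma.

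I expect the only real difficulty to be bookkeeping rather than anything conceptual: carefully tracking contexts as rules move interpretations between $\P(M^n)$ and $\P(M^{n+1})$, and applying consistently the order-reversal built into phrasing everything with Brouwer algebras, so that $\vdash$ reads as $\geq$, $0$ is ``true'', and the \emph{left} adjoint $(\forall y)$ and \emph{right} adjoint $(\exists y)$ of Definition~\ref{def-hyperdoctrine} take over the roles one naively expects of the opposite adjoints. Since this is exactly \cite[Proposition~8.8]{pitts-1989} (stated there with Heyting algebras), I would rely on that reference for the full rule-by-rule verification and only make explicit the points where the Brouwer-algebra convention changes the bookkeeping.
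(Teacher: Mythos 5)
Your outline is correct and matches the paper's treatment: the paper gives no proof of its own but simply cites Pitts [Proposition 8.8], and your rule-by-rule soundness induction (with the weakening and substitution lemmas, and the careful reversal whereby $\vdash$ reads as $\geq$, $(\forall y)$ is the left adjoint and $(\exists y)$ the right adjoint in the Brouwer-algebra convention) is exactly the standard argument that reference contains. The one point worth having made explicit — deriving Frobenius reciprocity for $\exists$-elimination from the fact that $\P(\pi)$ preserves the Brouwer implication — you do address, so nothing is missing.
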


\begin{thm}{\rm (Pitts \cite[Corollary 5.31]{pitts-2000})}
The class of first-order hyperdoctrines is \emph{complete} for IQC.
\end{thm}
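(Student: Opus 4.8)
The plan is to prove completeness by the standard Lindenbaum--Tarski, or ``term model'', construction (essentially following Pitts): from a sentence that is \emph{not} derivable in IQC, manufacture one particular first-order hyperdoctrine together with a structure in it that fails to satisfy the sentence. Fix a first-order language $\Sigma$ (in the end, the language of the given sentence) and let $\bC_\Sigma$ be the \emph{category of contexts}: objects are the natural numbers $n \in \omega$, where $n$ stands for the context $(x_1,\dots,x_n)$, and a morphism $n \to m$ is an $m$-tuple of $\Sigma$-terms in the variables $x_1,\dots,x_n$, identified when IQC proves them equal, with composition given by substitution. Concatenation of contexts gives finite products, so $1^{n}=n$ and the product hypothesis of Definition~\ref{def-hyperdoctrine} holds. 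Define $\P_\Sigma(n)$ to be the set of $\Sigma$-formulas in the context $(x_1,\dots,x_n)$ modulo IQC-provable equivalence, ordered by $[\psi]\leq[\theta]\iff\,\vdash_{\mathrm{IQC}}\theta\to\psi$; by the deduction theorem this is a Brouwer algebra with $0=[\top]$, $1=[\bot]$, $[\psi]\oplus[\theta]=[\psi\wedge\theta]$, $[\psi]\otimes[\theta]=[\psi\vee\theta]$ and $[\psi]\to[\theta]=[\psi\to\theta]$. For $f\colon n\to m$ given by a tuple of terms $\vec{t}$, let $\P_\Sigma(f)$ be the substitution map $[\psi(\vec{y})]\mapsto[\psi(\vec{t})]$, a homomorphism of Brouwer algebras by routine syntactic manipulation; this gives conditions (i) and (ii) of the definition.

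For condition (iii), the equality predicate ${=_n}\in\P_\Sigma(n\times n)=\P_\Sigma(2n)$ is the class of $\bigwedge_{i=1}^{n}(x_i = x_{n+i})$, and the adjointness $\P_\Sigma(\Delta_n)(A)\leq 0\iff A\leq{=_n}$ unwinds to exactly reflexivity of equality together with the Leibniz substitution rule of IQC. For condition (iv), the left and right adjoints to the weakening map $\P_\Sigma(\pi)\colon\P_\Sigma(\Gamma)\to\P_\Sigma(\Gamma\times X)$ are $(\forall x)_\Gamma([\psi])=[\forall x\,\psi]$ and $(\exists x)_\Gamma([\psi])=[\exists x\,\psi]$; the two adjunction inequalities unwind precisely to the introduction and elimination rules for $\forall$ and $\exists$ in IQC, the side condition ``$x$ does not occur in $\Gamma$'' being built into the setup, and the Beck--Chevalley condition is the purely syntactic fact that substitution along $s\colon\Gamma\to\Gamma'$ commutes with quantification once the bound variable is renamed to avoid capture. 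Hence $\P_\Sigma$ is a first-order hyperdoctrine.

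Now let $\gM_\Sigma$ be the \emph{term structure} for $\Sigma$ in $\P_\Sigma$: take the universe to be the object $1$, interpret an $n$-ary function symbol $f$ by the morphism $n\to 1$ named by the term $f(x_1,\dots,x_n)$, and interpret an $n$-ary relation symbol $R$ by $[R(x_1,\dots,x_n)]\in\P_\Sigma(n)$. The crux is a \emph{truth lemma}, proved by induction on syntax: for every term $t$ in a context $\vec{x}$ the morphism $\llbracket t(\vec{x})\rrbracket_{\gM_\Sigma}$ is the one named by $t$, and for every formula $\varphi$ in a context $\vec{x}=(x_1,\dots,x_n)$ one has $\llbracket\varphi(\vec{x})\rrbracket_{\gM_\Sigma}=[\varphi]\in\P_\Sigma(n)$. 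The atomic and propositional clauses are immediate from the definitions of the operations of $\P_\Sigma$, and the two quantifier clauses use the identification of the adjoints from the previous paragraph. Consequently a sentence $\varphi$ is satisfied in $\gM_\Sigma$ iff $[\varphi]=0$ in $\P_\Sigma(0)$, i.e.\ iff $\vdash_{\mathrm{IQC}}\varphi$. Therefore, if $\varphi\in\Th(\P)$ for \emph{every} first-order hyperdoctrine $\P$, then taking $\Sigma$ to be the language of $\varphi$ gives $\varphi\in\Th(\gM_\Sigma)$ and hence $\vdash_{\mathrm{IQC}}\varphi$; together with the soundness half (Proposition~\ref{prop-sound}) this yields $\bigcap_\P\Th(\P)=\{\varphi:\ \vdash_{\mathrm{IQC}}\varphi\}$, which is the asserted completeness.

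I expect the genuinely delicate bookkeeping to be: checking that everything above is well defined modulo provable equality and provable equivalence, fixing the exact indexing of variables in the product projections and in the equality predicates, and verifying Beck--Chevalley with correct renaming of bound variables. A less syntactic alternative would be to invoke the Kripke completeness of IQC and turn a Kripke countermodel over a poset $P$ into a hyperdoctrine --- for instance the subobject hyperdoctrine of the presheaf topos $\mathbf{Set}^{P}$, or a hand-built ``Kripke hyperdoctrine'' whose fibres consist of $P$-indexed families of upward-closed sets of worlds --- but the term model is the most self-contained, and is the argument I would record here.
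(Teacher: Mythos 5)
The paper does not prove this theorem itself but cites it from Pitts \cite{pitts-2000}; your Lindenbaum--Tarski term-model construction is precisely the standard argument behind that citation, and it is correct --- including the reversal of the usual adjoint roles ($\forall$ as left adjoint, $\exists$ as right adjoint) forced by the paper's Brouwer-algebra ordering, and the correct reading of condition (iii) as reflexivity plus the Leibniz rule. Nothing further is needed.
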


\section{The degrees of $\omega$-mass problems}\label{sec-omega-med}

In this section, we will introduce an extension of the Medvedev lattice, which we will need to define our first-order hyperdoctrine based on the Medvedev lattice. As mentioned in the introduction, Kolmogorov mentioned in his paper that solving the problem $\forall x \phi(x)$ is the same as solving the problem $\phi(x)$ for all $x$, \emph{uniformly in $x$}. We formalise this in the spirit of Medvedev and Muchnik in the following way.

\begin{defi}
An \emph{$\omega$-mass problem} is an element $\hA \in (\Pow(\omega^\omega))^\omega$. Given two $\omega$-mass problems $\hA,\hB$, we say that $\hA$ \emph{reduces} to $\hB$ (notation: $\hA \leq_\Mh \hB$) if there exists a partial Turing functional $\Phi$ such that for every $n \in \omega$ we have $\Phi(n \conc \B_n) \subseteq \A_n$. If both $\hA \leq_\Mh \hB$ and $\hB \leq_\Mh \hA$ we say that $\hA$ and $\hB$ are \emph{equivalent} (notation: $\hA \equiv_\Mh \hB$). We call the equivalence classes of this equivalence the \emph{degrees of $\omega$-mass problems} and denote the set of the degrees of $\omega$-mass problems by $\Mh$.
\end{defi}

\begin{defi}
Let $\hA, \hB$ be $\omega$-mass problems. We say that $\hA$ \emph{weakly reduces} to $\hB$ (notation: $\hA \leq_\Mhw \hB$) if for every sequence $(g_i)_{i \in \omega}$ with $g_i \in \B_i$ there exists a partial Turing functional $\Phi$ such that for every $n \in \omega$ we have $\Phi(n \conc g_n) \in \A_n$. If both $\hA \leq_\Mhw \hB$ and $\hB \leq_\Mhw \hA$ we say that $\hA$ and $\hB$ are \emph{weakly equivalent} (notation: $\hA \equiv_\Mhw \hB$). We call the equivalence classes of weak equivalence the \emph{weak degrees of $\omega$-mass problems} and denote the set of the weak degrees of $\omega$-mass problems by $\Mhw$.
\end{defi}

The next proposition tells us that $\Mh$ is a Brouwer algebra, like the Medvedev lattice.

\begin{prop}\label{prop-mh-brouwer}
The degrees of $\omega$-mass problems form a Brouwer algebra.
\end{prop}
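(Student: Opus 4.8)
The plan is to mimic the classical proof that the Medvedev lattice is a Brouwer algebra, carrying out every construction coordinatewise in $\omega$ while keeping the reductions uniform. First I would fix representatives and define the operations. For the join, set $\left(\hA \oplus \hB\right)_n = \{f \oplus g \mid f \in \A_n, g \in \B_n\}$; for the meet, set $\left(\hA \otimes \hB\right)_n = \{0 \conc f \mid f \in \A_n\} \cup \{1 \conc f \mid f \in \B_n\}$ (tagging by a leading bit, as in the Medvedev case, so that a solution tells us which of the two problems it solves). For the bottom element $0$ take the constant sequence $(\omega^\omega)_{n \in \omega}$ and for the top element $1$ take $(\emptyset)_{n \in \omega}$. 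For implication, define $\left(\hA \to \hB\right)_n = \{h \in \omega^\omega \mid \forall f \in \A_n \, (\Phi_h^{} \text{ total on } n \conc f \text{ and } \Phi_h(n \conc f) \in \B_n)\}$ — that is, $h$ codes (via a fixed effective coding of Turing functionals) a functional taking any solution of $\A_n$, together with the index $n$, to a solution of $\B_n$. One must check these are well-defined on $\equiv_{\Mh}$-classes, which is routine: given uniform reductions witnessing $\hA \equiv_{\Mh} \hA'$ and $\hB \equiv_{\Mh} \hB'$, one composes them with the given functional in each coordinate, using that the witnessing functional $\Phi$ does not depend on $n$.

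Next I would verify the lattice axioms. That $\oplus$ is the join and $\otimes$ the meet follows coordinatewise from the corresponding facts for the Medvedev lattice: the projections $f \oplus g \mapsto f$ and $f \oplus g \mapsto g$ are computable uniformly in $n$, and conversely a uniform functional reducing $\hC$ to both $\hA$ and $\hB$ can be combined (by running both and pairing the outputs) into one reducing $\hC$ to $\hA \oplus \hB$; for the meet, a solution $i \conc f$ of $\left(\hA \otimes \hB\right)_n$ is mapped, depending on the tag bit $i$, through the appropriate reduction, again uniformly in $n$. Distributivity of $\oplus$ over $\otimes$ is inherited coordinatewise from the distributivity of the Medvedev lattice, with the combining functional once more independent of $n$. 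Boundedness is immediate: everything reduces to $(\omega^\omega)_n$ (the empty functional works), and $(\emptyset)_n$ reduces to everything vacuously.

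Finally, the adjunction $\hA \oplus \hC \geq_{\Mh} \hB$ iff $\hC \geq_{\Mh} \hA \to \hB$. For the forward direction, a uniform $\Phi$ with $\Phi(n \conc (f \oplus h)) \in \B_n$ for all $f \in \A_n$, $h \in \C_n$ yields, for each fixed $h$, a functional $f \mapsto \Phi(n \conc (f \oplus h))$ sending $\A_n$ into $\B_n$; the map sending $n \conc h$ to a code for this functional is computable uniformly in $n$ and $h$ (using the $s$-$m$-$n$ theorem together with the fixed coding of functionals), hence witnesses $\hC \geq_{\Mh} \hA \to \hB$. Conversely, given a uniform $\Psi$ with $\Psi(n \conc h) \in \left(\hA \to \hB\right)_n$, define a functional which on input $n \conc (f \oplus h)$ first computes $\Psi(n \conc h)$, a code for some functional $\Theta$, and then outputs $\Theta(n \conc f) \in \B_n$; this is uniform in $n$, witnessing $\hA \oplus \hC \geq_{\Mh} \hB$. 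I expect the main point requiring care — though it is more bookkeeping than genuine obstacle — is the uniformity in the $n$-variable throughout: one must consistently feed the index $n$ into the functionals and check that the single-functional-for-all-$n$ requirement in the definition of $\leq_{\Mh}$ is met, which is exactly where the fixed effective coding of Turing functionals and the $s$-$m$-$n$ theorem are used in the implication clause.
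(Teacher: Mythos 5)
Your proposal is correct and follows essentially the same route as the paper: component-wise operations with a single uniform functional, the standard Medvedev-style implication, and the adjunction verified via the $s$-$m$-$n$ theorem to push the coordinate $n$ into the index. The only cosmetic difference is that you present elements of $(\hA \to \hB)_n$ as reals coding functionals (fed the index $n$), whereas the paper uses pairs $e \conc f$ with $\Phi_e(g \oplus f) \in \B_n$; under the natural coding these are uniformly interchangeable, so the two definitions agree up to $\equiv_{\Mh}$.
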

\begin{proof}
We claim that $\Mh$ is a Brouwer algebra under the component-wise operations on $\M$, i.e.\ the operations induced by:
\begin{align*}
(\hA \oplus \hB)_n &= \{f \oplus g \mid f \in \A_n, g \in \B_n\}\\
(\hA \otimes \hB)_n &= 0 \conc \A_n \cup 1 \conc \B_n\\
(\hA \to \hB)_n &= \{e \conc f \mid \forall g \in \A_n (\Phi_e(g \oplus f) \in \B_n).
\end{align*}
The proof of this is mostly analogous to the proof for the Medvedev lattice, so we will only give the proof for the implication.
Let us first show that $\hA \oplus (\hA \to \hB) \geq_\Mh \hB$. Define a Turing functional $\Phi$ by
\[\Phi(n \conc(g \oplus (e \conc f))) = \Phi_e(g \oplus f).\]
Then $\Phi$ witnesses that $\hA \oplus (\hA \to \hB) \geq_\Mh \hB$.

Conversely, let $\hC$ be such that $\hA \oplus \hC \geq_\Mh \hB$. Let $e \in \omega$ be such that $\Phi_e$ witnesses this fact. Let $\phi$ be a computable function sending $n$ to an index for the functional mapping $h$ to $\Phi_e(n \conc h)$. Let $\Psi$ be the functional sending $n \conc f$ to $\phi(n) \conc f$.
Then $\hC \geq_\Mh \hA \to \hB$ through $\Psi$.
\end{proof}

However, it turns out that this fails for $\Mhw$: it is still a distributive lattice, but it is not a Brouwer algebra.

\begin{prop}
The weak degrees of $\omega$-mass problems form a distributive lattice, but not a Brouwer algebra. In particular, they do not form a complete lattice.
\end{prop}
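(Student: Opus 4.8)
The plan is to split the statement into an easy half --- pure bookkeeping on top of the computation already carried out for the Medvedev lattice --- and a hard half, where all the effort goes into a construction that exploits the one genuine difference between strong and weak $\omega$-reducibility: in a weak reduction the Turing functional may be chosen \emph{after}, and depending on, the solution sequence of the source problem.

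For the first half I would check that $\Mhw$, equipped with the operations induced by the same component-wise $\oplus$ and $\otimes$ as in Proposition~\ref{prop-mh-brouwer}, is a distributive lattice. These operations are monotone for $\leq_\Mhw$, hence descend to weak degrees; that $\oplus$ is the join and $\otimes$ the meet is witnessed by exactly the reductions used for $\M$ --- the coordinate-wise projections $n\conc(f\oplus g)\mapsto n\conc f$, the coordinate-wise insertions $n\conc f\mapsto n\conc(0\conc f)$, and the obvious pairings/case-splits for a common bound --- each of which is a single Turing functional, uniform in the coordinate and independent of the solution sequence, hence in particular a weak reduction; and distributivity, i.e. $(\hA\oplus\hB)\otimes(\hA\oplus\hC)\leq_\Mhw\hA\oplus(\hB\otimes\hC)$, is witnessed by the same coordinate-uniform, oracle-independent functional (read the leading bit to learn whether you are on the $\B$-side or the $\C$-side and re-bracket). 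Two points deserve a remark: an $\omega$-mass problem with an empty component is weakly equivalent to the top element $1$ (there is then no solution sequence, so $\leq_\Mhw$ into it holds vacuously), and in the ``common bound'' direction the combining functional is assembled from the functionals supplied for the fixed solution sequence under consideration; neither causes any trouble.

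For the second half I would produce $\omega$-mass problems $\hA$ and $\hB$ such that the filter $S=\{\hC\in\Mhw: \hA\oplus\hC\geq_\Mhw\hB\}$ has no least element, and in fact a $\leq_\Mhw$-decreasing sequence inside $S$ with no infimum in $\Mhw$ at all. Since a Brouwer algebra has all relative pseudocomplements --- the least element of such an $S$, if it existed, would be $\hA\to\hB$ --- the first fact shows $\Mhw$ is not a Brouwer algebra, and the second shows, in particular, that it is not complete. The design principle: because a weak reduction gets a fresh functional for every solution sequence, ``how much information $\hC$ must add to lift $\hA$ above $\hB$'' is a moving target. Thus one wants $\hB$ to demand a fixed non-computable object in each coordinate, $\hA$ to supply that object with coordinate-dependent finite damage, and a decreasing chain of ``repair'' $\omega$-mass problems, each supplying in coordinate $n$ a finite repair that can be hard-coded over any bounded block of coordinates --- so each repair problem lies in $S$ --- while no $\omega$-mass problem below all of them has, for every one of its solution sequences, a single functional producing every repair; hence none lies in $S$, and, with a little more work, none is an infimum of the chain. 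The delicate point is that the naive realisations (e.g. $\hA$ supplying $X$ correct off an initial segment, with $\B_n=\{g:g\geq_T X\}$) make $S$ principal, since the missing data is then itself a single $\omega$-mass problem; the damage has to be engineered so that the cheapest uniform repair genuinely fails to exist.

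The routine half is genuinely routine; the obstacle is everything in the second half --- choosing $\hA$, $\hB$ and the chain so that one can actually diagonalise against \emph{all} candidate common lower bounds and infima. The argument must be immune to the cheap moves a weak reduction can make: inspecting the oracle to detect which ``branch'' of a solution one is on, copying the oracle to realise a pointwise bound, and gluing together finitely many functionals. It is exactly the need to defeat these that forces the non-uniformity to be spread across infinitely many coordinates and anchored to a genuinely non-computable object, and getting that combinatorics right --- rather than any conceptual subtlety --- is where the work lies.
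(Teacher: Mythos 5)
Your first half is fine and matches the paper's (essentially one-line) treatment: the component-wise operations descend to weak degrees and give a distributive lattice. The problem is the second half, which is where the entire content of the proposition lives: you correctly identify the target (show the filter $S=\{\hC : \hA\oplus\hC\geq_\Mhw\hB\}$ has no least element) and correctly identify the resource to exploit (the functional in a weak reduction may depend on the solution sequence), but you then describe only desiderata for a construction and explicitly stop before building one, conceding that getting the combinatorics right is ``where the work lies''. That construction \emph{is} the proof, and it is missing. Moreover, the design you gesture at (coordinate-dependent \emph{finite} damage to a fixed object, a decreasing chain of finite ``repairs'') is not the one that works, and you yourself flag the reason: finite repairs tend to make $S$ principal. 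The working mechanism is different: the repair is a genuinely non-computable object localised to a \emph{single coordinate}, and the damage to $\hA$ is not built into $\hA$ at all but is applied only at the diagonalisation stage.

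Concretely, the paper takes $f,g$ of incomparable Turing degree, $\A_i=\{h : h\equiv_T f\}$ (a full Turing degree, not a singleton --- this slack is essential), $\B_i=\{f\oplus g\}$, and for each $j$ the problem $\C^j$ with $\C^j_j=\{g\}$ and $\C^j_i=\{f\oplus g\}$ for $i\not=j$. Each $\C^j$ lies in $S$ because a weak reduction may choose its functional after seeing the solution sequence and hence may hard-code the one coordinate $j$ at which the extra help $g$ sits. A least element $\hA\to\hB$ of $S$ would lie $\leq_\Mhw$ every $\C^j$, so its $j$-th component would contain some $g_j\leq_T g$; since $g_j$ does not compute $f$, for each $j$ there is a finite string $\sigma_j$ on which $\Phi_j(j\conc(\sigma_j\oplus g_j))$ converges to a value disagreeing with $f\oplus g$, and then $f_j:=\sigma_j\conc f\in\A_j$ defeats $\Phi_j$ at coordinate $j$. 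The single solution sequence $(f_j\oplus g_j)_{j\in\omega}$ thus defeats every candidate functional simultaneously, so $\hA\oplus(\hA\to\hB)\not\geq_\Mhw\hB$, a contradiction. Note also that the paper obtains non-completeness directly as a consequence of the failure of the Brouwer condition, so your additional project of exhibiting a decreasing chain in $S$ with no infimum is not needed.
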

\begin{proof}
It is easy to see that $\Mhw$ is a distributive lattice under the same operations as $\Mh$. Towards a contradiction, assume $\Mhw$ is a Brouwer algebra, under some implication $\to$. Let $f,g \in \omega^\omega$ be two functions of incomparable Turing degree. Let $\hA$ be given by $\A_i = \{h \mid h \equiv_T f\}$ and let $\hB$ be given by $\B_i = \{f \oplus g\}$. For every $j \in \omega$, let $(\C^j_i)_{i \in \omega}$ be given by $\C^j_i = \{g\}$ for $i=j$, and $\C^j_i = \{f \oplus g\}$ otherwise.

Then, for every $j \in \omega$ we have $\hA \oplus (\C^j_i)_{i \in \omega} \geq_\Mhw \hB$: given a sequence $(h_i)_{i \in \omega}$ with $h_i \in \A_i$, let $e$ be such that $\Phi_e(h_j) = f$. Now let $\Phi(n \conc (s \oplus t))$ be $t$ for $n \not= j$ and $\Phi_e(s) \oplus t$ otherwise. This $\Phi$ is the required witness.

So, since we assumed $\to$ makes $\Mhw$ into a Brouwer algebra, we know that every $(\hA \to \hB) \leq_\Mhw (\C^j_i)_{i \in \omega}$ for every $j \in \omega$. Thus, for every $j \in \omega$ there is some $g_j \leq_T g$ in $(\hA \to \hB)_j$.
For every $j \in \omega$, fix a $\sigma_j \in \omega^{<\omega}$ such that there exists an $n \in \omega$ with $\Phi_j(j \conc (\sigma_j \oplus g_j))(n) {\downarrow} \not= (f \oplus g)(n)$, which exists because $g$, and therefore $g_j \leq_T g$, does not compute $f$. Now let $f_j = \sigma_j \conc f$. Then we have $(f_i)_{i \in \omega} \in \hA$ and $(g_i)_{i \in \omega} \in \hA \to \hB$, but for every $j \in \omega$ we have that $\Phi_j(j \conc (f_j \oplus g_j)) \not\in \B_j$. Thus $\hA \oplus (\hA \to \hB) \not\geq_\Mhw \hB$, a contradiction.
\end{proof}

Finally, let us show that $\Mh$ and $\Mhw$ are extensions of the Medvedev and Muchnik lattices, in the sense that the latter embed into the first. Furthermore, we show that the countable products of $\M$ and $\Mw$ are quotients of $\Mh$ and $\Mhw$.

\begin{prop}
There is a Brouwer algebra embedding of $\M$ into $\Mh$ and a lattice embedding of $\Mw$ into $\Mhw$, both given by
\[\alpha(\A)_n = \A.\]
\end{prop}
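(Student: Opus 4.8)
The plan is to verify that $\alpha$ induces a well-defined, order-reflecting map on the quotient structures which commutes with all the operations. Since $\oplus$, $\otimes$ and (in $\Mh$) $\to$ are computed component-wise by the formulas appearing in the proof of Proposition~\ref{prop-mh-brouwer}, and $\alpha$ produces \emph{constant} sequences, compatibility with the operations will be largely formal once the reductions have been understood.

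For the Medvedev statement I would first establish the equivalence $\A \leq_\M \B \iff \alpha(\A) \leq_\Mh \alpha(\B)$ by translating witnesses. If $\Phi$ witnesses $\A \leq_\M \B$, then the functional $\Psi$ given by $\Psi(n \conc g) := \Phi(g)$, which simply discards the coordinate $n$, satisfies $\Psi(n \conc \B) \subseteq \A$ for every $n$; conversely, if $\Psi$ witnesses $\alpha(\A) \leq_\Mh \alpha(\B)$ then $g \mapsto \Psi(0 \conc g)$ witnesses $\A \leq_\M \B$. This single equivalence gives at once that $\alpha$ is well defined on degrees, injective, and an order-embedding. Unwinding the component-wise formulas, one then checks that $\alpha(\A \oplus \B)$, $\alpha(\A \otimes \B)$ and $\alpha(\A \to \B)$ are literally the same $\omega$-mass problems as $\alpha(\A) \oplus \alpha(\B)$, $\alpha(\A) \otimes \alpha(\B)$ and $\alpha(\A) \to \alpha(\B)$, since each component of the right-hand side is built from the constant sequences by exactly the defining formula of the left-hand side; moreover $\alpha$ sends a computable singleton to $0$ and $\emptyset$ to $1$. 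Hence $\alpha$ is a Brouwer algebra embedding.

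For the Muchnik statement order-reflection is just as easy: given $\alpha(\A) \leq_\Mhw \alpha(\B)$ and $g \in \B$, applying the definition to the constant sequence $g_i = g$ yields a functional $\Phi$ with $\Phi(0 \conc g) \in \A$, so $g$ computes a member of $\A$, whence $\A \leq_\Mw \B$. The step I expect to be the main obstacle is order-\emph{preservation}: here the ``discard the coordinate'' trick breaks down, because for a fixed solution sequence $(g_i)$ the witnessing functional in the definition of $\leq_\Mhw$ must still act uniformly in $n$, while $\A \leq_\Mw \B$ only provides, for each $i$ separately, \emph{some} $\leq_T$-reduction of a member of $\A$ to $g_i$, and these reductions need not be uniform in $i$. (For arbitrary representatives the implication $\A \leq_\Mw \B \Rightarrow \alpha(\A) \leq_\Mhw \alpha(\B)$ really can fail --- for instance taking $\A$ to be the $\Pi^0_1$ class of completions of $\mathrm{PA}$ and $\B$ its upward closure under $\leq_T$, which are Muchnik- but not Medvedev-equivalent.) The way around this is to take $\A$ to be its $\leq_T$-upward closure, i.e.\ the canonical Muchnik representative: then $\A \leq_\Mw \B$ forces $\B \subseteq \A$, so for any sequence $(g_i)$ with $g_i \in \B$ the shift functional $\Phi(n \conc g) := g$ already has $\Phi(n \conc g_n) = g_n \in \A$ for every $n$, witnessing $\alpha(\A) \leq_\Mhw \alpha(\B)$. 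With this convention $\alpha$ is a well-defined order-embedding of $\Mw$ into $\Mhw$, and compatibility with $\oplus$ and $\otimes$ is checked as before, using additionally that any function computing a solution of $\A \oplus \B$ (for upward-closed $\A, \B$) lies in both $\A$ and $\B$, so that $g \mapsto g \oplus g$ supplies the reduction one needs. This makes $\alpha$ a lattice embedding.
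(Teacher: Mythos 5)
Your treatment of the Medvedev half is correct and is essentially the paper's argument (the paper merely calls the verification ``direct'', via the diagonal of $\Mh$). For the Muchnik half you have put your finger on a genuine subtlety that the paper's one-line proof does not engage with: by the diagonalisation you sketch (for each $e$ choose $g_e \in \B$ with $\Phi_e(e \conc g_e) \notin \A$, which is possible whenever $\A \not\leq_\M \B$), a constant sequence over $\A$ reduces in $\Mhw$ to a constant sequence over $\B$ exactly when $\A \leq_\M \B$, so the map is not well defined on Muchnik degrees without fixing representatives. Your repair --- passing to $\leq_T$-upward-closed canonical representatives --- does restore well-definedness, order-reflection, order-preservation and preservation of $\oplus$.

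However, the same diagonalisation defeats your last step, the preservation of $\otimes$, where you say the verification goes ``as before''. With upward-closed representatives the canonical representative of the meet of the Muchnik degrees of $\A$ and $\B$ is $\A \cup \B$, whereas the meet of $\alpha(\A)$ and $\alpha(\B)$ in $\Mhw$ is the constant sequence with components $0 \conc \A \cup 1 \conc \B$, and one direction of the required equivalence fails. Take $\A$ and $\B$ to be the Turing cones above incomparable $f$ and $g$. For each $e$, if $\Phi_e(e \conc h) \in 0 \conc \A \cup 1 \conc \B$ for all $h \in \A \cup \B$, then on input $e \conc f$ the first output bit must be $0$ and is decided by a finite use; replacing the tail of $f$ beyond that use by $g$ yields $h' \equiv_T g$ lying in $\B$, on which $\Phi_e$ still claims to produce an element of $0 \conc \A$, i.e.\ something computing $f$ from $g$ --- a contradiction. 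Hence for every $e$ there is $h_e \in \A \cup \B$ with $\Phi_e(e \conc h_e) \notin 0 \conc \A \cup 1 \conc \B$, and the sequence $(h_e)_{e \in \omega}$ shows that the constant sequence over $\A \cup \B$ does not lie $\Mhw$-above the componentwise meet. So under your convention $\alpha$ preserves order and joins but not meets, and the lattice-embedding claim is not established. (The obstruction appears to be intrinsic to the statement rather than to your argument --- the paper's own assertion that the diagonal of $\Mhw$ is isomorphic to the diagonal of $\Mw^\omega$ meets the same difficulty you identified --- but as written your proof is incomplete at exactly this point.)
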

\begin{proof}
Direct, using the fact that the diagonal of $\Mh$, i.e.\ $\{\hA \in \Mh \mid \forall n,m (\A_n = \A_m)\}$, is isomorphic to the diagonal of $\M^\omega$, which is directly seen to be isomorphic to $\M$. The same holds for $\Mhw$ and $\Mw$.
\end{proof}

\begin{prop}
There is a Brouwer algebra homomorphism of $\Mh$ onto $\M^\omega$ and a lattice homomorphism of $\Mhw$ onto $\Mw^\omega$.
\end{prop}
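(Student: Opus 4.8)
The plan is to check that the obvious ``identity on representatives'' assignments are the required homomorphisms. Define $\rho\colon \Mh \to \M^\omega$ by sending the $\Mh$-degree of $\hA$ to the sequence of Medvedev degrees $([\A_n]_\M)_{n \in \omega}$, and $\rho_w\colon \Mhw \to \Mw^\omega$ by sending the $\Mhw$-degree of $\hA$ to $([\A_n]_{\Mw})_{n \in \omega}$, where $\M^\omega$ and $\Mw^\omega$ are given the coordinatewise order and operations. I will work out the Medvedev case in detail; the Muchnik case runs along the same lines, with $\leq_\Mh$ replaced by $\leq_\Mhw$ and $\M$ by $\Mw$, and yields only a lattice homomorphism since $\Mhw$ is merely a distributive lattice.

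First I would show $\rho$ is well defined and monotone. If $\hA \leq_\Mh \hB$ is witnessed by a functional $\Phi$ with $\Phi(n \conc \B_n) \subseteq \A_n$ for all $n$, then for each fixed $n$ the functional $g \mapsto \Phi(n \conc g)$ witnesses $\A_n \leq_\M \B_n$; hence $([\A_n]_\M)_n \leq ([\B_n]_\M)_n$ coordinatewise. This is the one genuinely content-bearing point: a reduction in $\Mh$ is by definition uniform in the coordinate $n$, so it restricts coordinatewise, whereas $\M^\omega$ imposes no uniformity across coordinates, and the single functional of the $\Mh$-reduction is precisely the data discarded by $\rho$. In particular $\hA \equiv_\Mh \hB$ gives $\A_n \equiv_\M \B_n$ for every $n$, so $\rho$ is well defined on $\Mh$. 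Surjectivity is then immediate: given $(\mathbf a_n)_n \in \M^\omega$, choose $\A_n \in \mathbf a_n$ and observe that $\hA = (\A_n)_n$ is sent to $(\mathbf a_n)_n$. Finally, $\rho$ is a homomorphism because, by Proposition~\ref{prop-mh-brouwer}, the operations $\oplus$, $\otimes$ and $\to$ on $\Mh$ are the coordinatewise extensions of the corresponding operations on $\M$, and $\M^\omega$ likewise carries the coordinatewise operations; it preserves $0$ and $1$ as well, automatically so for a surjective lattice homomorphism out of a bounded lattice. Here one uses that $\M^\omega$ is itself a Brouwer algebra, the adjunction $\mathbf a \oplus \mathbf b \geq \mathbf c \Leftrightarrow \mathbf b \geq \mathbf a \to \mathbf c$ holding coordinatewise.

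The step I expect to be the main obstacle is well-definedness for $\Mhw$. A weak reduction $\hA \leq_\Mhw \hB$ supplies only one functional per choice sequence through $\hB$, so to deduce $\A_n \leq_{\Mw} \B_n$ from a given $g \in \B_n$ one wants to extend $g$ to a choice sequence through all of $\hB$ and then apply the hypothesis; this is harmless when every coordinate of $\hB$ is inhabited, but forces a separate treatment of the $\omega$-mass problems having an empty coordinate, which all represent the top degree of $\Mhw$. That is exactly the place where the coordinatewise description of $\rho_w$ has to be handled with care; once it is settled, surjectivity onto $\Mw^\omega$ and preservation of $\oplus$ and $\otimes$ go through as in the Medvedev case, using that $\Mhw$ and $\Mw$ share the same coordinatewise join and meet.
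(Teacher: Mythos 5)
Your map is exactly the one the paper has in mind: the paper's entire proof is the remark that the operations on $\Mh$ and $\Mhw$ are component-wise and that the reducibilities on $\Mh$ and $\Mhw$ are stronger than the coordinatewise ones, so your Medvedev half is a correct and complete elaboration of the same argument, and the observation you single out as the content-bearing point (a uniform reduction restricts to each coordinate) is indeed the whole point.

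The Muchnik half, however, is left with a genuine gap at precisely the place you flag and then defer. The problem is not just that one must ``treat the empty-coordinate problems separately''; it is that the coordinatewise map $\rho_w$ is not well defined there, and no amount of care with the same formula repairs this. If some $\B_i=\emptyset$ then there are no choice sequences through $\hB$ at all, so $\hA\leq_\Mhw\hB$ holds vacuously for every $\hA$ and $\hB$ lies in the top degree of $\Mhw$; but two such representatives, say $\B_0=\emptyset$ with $\B_n=\omega^\omega$ for $n\geq 1$ versus $\B'_n=\emptyset$ for all $n$, have visibly different images $([\B_n]_\Mw)_n$ and $([\B'_n]_\Mw)_n$ in $\Mw^\omega$. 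So $\rho_w$ must be redefined on the top degree (necessarily to the top of $\Mw^\omega$, since a surjective lattice homomorphism must send the maximum to the maximum), and one must then re-verify the homomorphism identities $\rho_w(\hA\otimes\hB)=\rho_w(\hA)\otimes\rho_w(\hB)$ when exactly one of the arguments has an empty coordinate --- which is where the actual work in the $\Mhw$ case sits, since $(\hA\otimes\hB)_n=0\conc\A_n\cup 1\conc\B_n$ has all coordinates nonempty as soon as $\hB$ does. Saying ``once it is settled, the rest goes through'' skips this verification. In fairness, the paper's own one-line proof is equally silent about the vacuous case, so you have reproduced its argument faithfully; but as a self-contained proof the $\Mhw$ statement is not yet established by what you have written.
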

\begin{proof}
Follows directly from the fact that all operations on $\Mh$ and $\Mhw$ are component-wise, and the fact that the reducibilities on $\Mh$ and $\Mhw$ are stronger than those on $\M^\omega$ respectively $\M_w^\omega$.
\end{proof}

\section{The hyperdoctrine of mass problems}\label{sec-med-hyperdoc}

In this section, we will introduce our first-order hyperdoctrine based on $\M$ and $\Mh$, which we will call the \emph{hyperdoctrine of mass problems} $\P_\M$. We will take the category $\bC$ to be the category with objects $\{1\},\{1,2\},\dots$ and $\omega$, and with functions the computable functions between them. We will define $\P_\M(\omega)$ to be $\Mh$. Now, let us look at how to define $\P_\M(\alpha) = \alpha^*$ for functions $\alpha: \omega \to \omega$.

\begin{defi}
Let $\alpha: \omega \to \omega$. Then $\alpha^*: \Pow(\omega^\omega)^\omega \to \Pow(\omega^\omega)^\omega$ is the function given by
\[(\alpha^*(\hA))_n = \A_{\alpha(n)}.\]
\end{defi}


\begin{prop}\label{prop-star}
Let $\alpha: \omega \to \omega$ be a computable function. Then $\alpha^*$ induces a well-defined function on $\Mh$ by sending $\hA$ to $\alpha^*(\hA)$, which is in fact a Brouwer algebra homomorphism.
\end{prop}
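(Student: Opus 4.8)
The plan is to verify the two claims in order: first that $\alpha^*$ respects the equivalence $\equiv_\Mh$ (hence descends to a well-defined map on $\Mh$), and second that the induced map preserves the Brouwer algebra structure, i.e.\ the bounds $0,1$, the operations $\oplus$, $\otimes$, and the implication $\to$ given component-wise in Proposition~\ref{prop-mh-brouwer}. Throughout I will use that $\alpha$ is computable; this is exactly the hypothesis that makes all the witnessing Turing functionals below uniform in the relevant index.

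For well-definedness, suppose $\hA \leq_\Mh \hB$ via a partial Turing functional $\Phi$, so $\Phi(n \conc \B_n) \subseteq \A_n$ for all $n$. I want a functional $\Psi$ witnessing $\alpha^*(\hA) \leq_\Mh \alpha^*(\hB)$, i.e.\ $\Psi(n \conc \B_{\alpha(n)}) \subseteq \A_{\alpha(n)}$ for all $n$. The obvious choice is $\Psi(n \conc h) = \Phi(\alpha(n) \conc h)$, which is partial computable because $\alpha$ is computable; then for $h \in \B_{\alpha(n)}$ we get $\Phi(\alpha(n) \conc h) \in \A_{\alpha(n)}$ as required. Applying this in both directions shows $\hA \equiv_\Mh \hB$ implies $\alpha^*(\hA) \equiv_\Mh \alpha^*(\hB)$, so $\alpha^*$ is well-defined on $\Mh$; monotonicity comes for free from the same argument.

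For the Brouwer algebra homomorphism property, I would check each operation by exhibiting the defining Turing functionals, all of which are the identity-like reindexings and hence trivially uniform. Since $\alpha^*$ just permutes/repeats coordinates, $\alpha^*(0) = 0$ and $\alpha^*(1) = 1$ on the nose, and $(\alpha^*(\hA \oplus \hB))_n = (\A \oplus \B)_{\alpha(n)} = \{f \oplus g \mid f \in \A_{\alpha(n)}, g \in \B_{\alpha(n)}\} = (\alpha^*\hA \oplus \alpha^*\hB)_n$, with the analogous literal equality for $\otimes$ and for $\to$ using the component-wise formulas from Proposition~\ref{prop-mh-brouwer}. Thus one actually has equality of the underlying $\omega$-mass problems, not merely equivalence, which makes the homomorphism claim immediate once well-definedness is established.

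The only genuine subtlety — and the step I would flag as the main point rather than the main obstacle — is making sure that the reindexing $n \mapsto \alpha(n)$ is carried out \emph{inside} the Turing functional rather than outside: a reduction for $\omega$-mass problems must be a single functional $\Phi$ handling all components $n$ at once by reading the tag $n$, so composing with $\alpha$ is only legitimate because $\alpha$ is computable and can be evaluated by $\Phi$ on the fly. There is no real obstacle here; the proof is a routine unwinding of definitions, and the write-up can be kept short by noting the literal equalities above and spelling out only the well-definedness functional $\Psi(n \conc h) = \Phi(\alpha(n) \conc h)$.
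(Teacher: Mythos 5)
Your proposal is correct and follows essentially the same route as the paper: well-definedness via the functional $\Psi(n \conc h) = \Phi(\alpha(n) \conc h)$, and the homomorphism property from the fact that the operations on $\Mh$ are component-wise (the paper states this last part without even writing out the literal equalities you note). No gaps.
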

\begin{proof}
We need to show that if $\hA \leq_\Mh \hB$, then $\alpha^*(\hA) \leq_\Mh \alpha^*(\hB)$. Let $\Phi$ witness that $\A \leq_\Mh \B$. Let $\Psi$ be the partial Turing functional sending $n \conc f$ to $\Phi(\alpha(n) \conc f)$. Then $\Psi$ witnesses that $\alpha^*(\hA) \leq_\Mh \alpha^*(\hB)$. That $\alpha^*$ is a Brouwer algebra homomorphism follows easily from the fact that the operations on $\Mh$ are component-wise.
\end{proof}

Next, we will show that for every computable $\alpha$ we have that $\alpha^*$ has both right and left adjoints, which will certainly suffice to satisfy condition \eqref{condiv} of Definition \ref{def-hyperdoctrine}.

\begin{prop}\label{prop-adjoint}
Let $\alpha: \omega \to \omega$ be a computable function. Then $\alpha^*: \Mh \to \Mh$ has a right adjoint $\exists_\alpha$ and a left adjoint $\forall_\alpha$.
\end{prop}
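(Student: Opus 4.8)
The plan is to construct $\exists_\alpha$ and $\forall_\alpha$ directly on $\omega$-mass problems by gathering, for each index $n$, the fibres $\{\A_m \mid \alpha(m) = n\}$ and combining them with the lattice meet (for $\exists$) and join (for $\forall$). Concretely, I would set
\[(\exists_\alpha(\hA))_n = \bigotimes_{m \in \alpha^{-1}(n)} \A_m, \qquad (\forall_\alpha(\hA))_n = \bigoplus_{m \in \alpha^{-1}(n)} \A_m,\]
with the convention that an empty meet is $0$ and an empty join is $1$. This mirrors Example~\ref{ex-compl-br}, but now the `index set' $\alpha^{-1}(n)$ for each coordinate is a computably enumerable subset of $\omega$, so I must be careful that the infinitary meets and joins are realised by a single uniform Turing functional rather than by separate witnesses. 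For the meet $\bigotimes_{m \in \alpha^{-1}(n)}\A_m$ I would use the standard tagged-union representation: an element is a string $k \conc h$ where $k$ is (a code for) the position of some $m_k \in \alpha^{-1}(n)$ in the canonical enumeration and $h \in \A_{m_k}$; since $\alpha$ is computable we can compute $m_k$ from $n$ and $k$, which keeps everything uniform. For the join $\bigoplus_{m \in \alpha^{-1}(n)}\A_m$ I would represent an element as a function coding, on the $k$th block, an element of $\A_{m_k}$; again computability of $\alpha$ lets us decode uniformly. When $\alpha^{-1}(n) = \emptyset$, set the $n$th coordinate to the degenerate problem $\{0^\omega\}$ realising $0$, resp.\ $\emptyset$ realising $1$.

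Next I would verify the two adjunctions. For the right adjoint I must show
\[\alpha^*(\hB) \leq_\Mh \hA \iff \hB \leq_\Mh \exists_\alpha(\hA).\]
For the forward direction, a functional $\Phi$ with $\Phi(m \conc \A_m) \subseteq (\alpha^*(\hB))_m = \B_{\alpha(m)}$ is used to build $\Psi$: given $n \conc (k \conc h)$ with $h \in \A_{m_k}$ and $\alpha(m_k) = n$, output $\Phi(m_k \conc h) \in \B_n$; the key point is that $m_k$ is computable from $(n,k)$. Conversely, from $\Psi$ witnessing $\hB \leq_\Mh \exists_\alpha(\hA)$, given $m \conc h$ with $h \in \A_m$, compute $n = \alpha(m)$ and the position $k$ of $m$ in the enumeration of $\alpha^{-1}(n)$, feed $n \conc (k \conc h)$ to $\Psi$, and land in $\B_n = (\alpha^*(\hB))_m$. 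The left adjoint is dual:
\[\hA \leq_\Mh \alpha^*(\hB) \iff \forall_\alpha(\hA) \leq_\Mh \hB.\]
From $\Phi(m \conc \B_{\alpha(m)}) \subseteq \A_m$ one builds, for input $n \conc g$ with $g \in \B_n$, the element of $\bigoplus_{m \in \alpha^{-1}(n)}\A_m$ whose $k$th block is $\Phi(m_k \conc g) \in \A_{m_k}$ — uniform because $\alpha$ is computable; conversely, from a witness $\Psi$ for $\forall_\alpha(\hA) \leq_\Mh \hB$ one recovers, on input $m \conc g$ with $g \in \B_{\alpha(m)}$, the $k$th block of $\Psi(\alpha(m) \conc g)$, where $k$ is the position of $m$ in $\alpha^{-1}(\alpha(m))$.

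I expect the main obstacle to be not the adjunction computations themselves — those are routine currying/uncurrying once the representations are fixed — but getting the \emph{representations of the infinitary meet and join right so that the single-functional ("strong") reducibility of $\Mh$ is respected}, together with the bookkeeping that converts between an index $m$ and its position $k$ in the canonical enumeration of the fibre $\alpha^{-1}(\alpha(m))$. Because $\alpha$ is computable (not merely $\alpha^{-1}$ being c.e.\ in some opaque way), this translation is total and computable, so there is no genuine effectivity gap; but it is the place where the argument could go wrong if one is cavalier, so I would state explicitly the computable pairing $m \leftrightarrow (\alpha(m), k)$ and use it consistently. A final remark: naturality in $\Gamma$ (the Beck–Chevalley condition) is not needed here since our category's objects are $\omega$ and the finite sets $\{1,\dots,k\}$ and a separate check is deferred; this proposition only asserts the existence of the adjoints, exactly as needed to begin verifying condition~\eqref{condiv} of Definition~\ref{def-hyperdoctrine}.
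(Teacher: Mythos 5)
Your overall strategy is the paper's: define the adjoints coordinatewise as fibrewise meets and joins and verify the two adjunctions by currying. But two of the concrete choices you flag as ``the place where the argument could go wrong'' do in fact go wrong. First, your empty-fibre convention is inverted. In this Brouwer-algebra setting $0$ is the bottom (interpreting $\top$) and $1$ is the top (interpreting $\bot$), so the empty meet must be the \emph{top} element $1$, realised by the empty mass problem $\emptyset$, and the empty join must be the \emph{bottom} element $0$, realised by a computable singleton; you have assigned them the other way around. This is not cosmetic: if $n \notin \mathrm{range}(\alpha)$ and you set $(\exists_\alpha(\hA))_n = \{0^\omega\}$, then taking $\B_n = \{h\}$ for noncomputable $h$ and $\B_m = \omega^\omega$ elsewhere gives $\alpha^*(\hB) \leq_\Mh \hA$ trivially while $\hB \leq_\Mh \exists_\alpha(\hA)$ would require computing $h$ from $0^\omega$, so the adjunction fails whenever $\alpha$ is not surjective. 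The paper's tagging convention $(\exists_\alpha(\hA))_m = \{n \conc f \mid f \in \A_n,\ \alpha(n)=m\}$ produces $\emptyset$ on empty fibres automatically.

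Second, your position-indexed representation of $\bigoplus_{m \in \alpha^{-1}(n)} \A_m$ breaks the effectivity of the witness for $\forall_\alpha(\hA) \leq_\Mh \hB$. Given $g \in \B_n$ you must output a \emph{total} function whose $k$th block lies in $\A_{m_k}$; but when $\alpha^{-1}(n)$ is finite of size not computable from $n$ (e.g.\ $\alpha(\langle e,s\rangle) = e+1$ if $\Phi_e(e)$ halts in exactly $s$ steps and $0$ otherwise), the search for $m_k$ diverges for $k$ beyond the fibre size and cannot be detected to fail, so the output is not an element of $\omega^\omega$. The paper avoids this by indexing the join over all of $\omega$ and padding: an element of $(\forall_\alpha(\hA))_m$ is $\bigoplus_{n \in \omega} f_n$ with $f_n \in \A_n$ when $\alpha(n) = m$ and $f_n = 0$ otherwise, so block $n$ is decided just by computing $\alpha(n)$. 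Both defects are repaired by adopting these representations (which also dispose of the $m \leftrightarrow (\alpha(m),k)$ bookkeeping entirely); your adjunction computations are otherwise the same as the paper's.
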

\begin{proof}
Let us first consider the right adjoint. We define:
\[(\exists_\alpha(\hA))_m = \{n \conc f \mid f \in \A_n \wedge \alpha(n) = m\}.\]
Then $\exists_\alpha$ is a well-defined function on $\Mh$.
Namely, assume $\hA \leq_\Mh \hB$, say through $\Phi$. Let $\Psi$ be the partial functional sending $m \conc n \conc h$ to $n \conc \Phi(n \conc h)$, then $\Psi$ witnesses that $\exists_\alpha(\hA) \leq_\Mh \exists_\alpha(\hB)$.

We claim: $\exists_\alpha$ is a right adjoint for $\alpha^*$, i.e.\ $\alpha^*(\hA) \leq_\Mh \hB$ if and only if $\hA \leq_\Mh \exists_\alpha(\hB)$. First, let us assume that $\alpha^*(\hA) \leq_\Mh \hB$; say through $\Phi$.
Let $\Psi$ be the functional sending $j \conc i \conc h$ to $\Phi(i \conc h)$. We claim: for every $m \in \omega$, $\Psi(m \conc (\exists_\alpha(\hB))_m) \subseteq \A_m$. Indeed, let $n \conc f \in (\exists_\alpha(\hB))_m$. Then $\alpha(n) = m$ and $f \in \B_n$. Thus, per choice of $\Phi$ we know that
\[\Psi(m \conc n \conc f) = \Phi(n \conc f) \in \alpha^*(\hA)_n = \A_{\alpha(n)} = \A_m.\]

Conversely, assume $\hA \leq_\Mh \exists_\alpha(\hB)$; say through $\Psi$. Let $\Phi$ be the functional sending $i \conc h$ to $\Psi(\alpha(i) \conc i \conc h)$. Let $n \in \omega$. We claim:
\[\Phi(n \conc \B_n) \subseteq (\alpha^*(\hA))_n = \A_{\alpha(n)}.\]
Indeed, let $f \in \B_n$. Then $n \conc f \in (\exists_\alpha(\hB))_{\alpha(n)}$. Thus:
\[\Phi(n \conc f) = \Psi(\alpha(n) \conc n \conc f) \in \A_{\alpha(n)}.\]

\bigskip
Next, we consider the left adjoint. We define:
\[(\forall_\alpha(\hA))_m = \left\{\bigoplus_{n \in \omega} f_n \mid \forall n \in \omega((\alpha(n) = m \wedge f_n \in \A_n) \vee (\alpha(n) \not= m \wedge f_n = 0))\right\}.\]
Then $\forall_\alpha$ is a well-defined function on $\Mh$, as can be proven in a similar way as for $\exists_\alpha$. We claim that it is a left adjoint for $\alpha^*$, i.e.\ $\hA \leq_\Mh \alpha^*(\hB)$ if and only if $\forall_\alpha(\hA) \leq_\Mh \hB$.

First, assume $\hA \leq_\Mh \alpha^*(\hB)$, say through $\Phi$. Let $m \in \omega$ and let $g \in \B_m$. Now let
\[f = \bigoplus_{n \in \omega} f_n\]
where $f_n = \Phi(n \conc g)$ if $\alpha(n) = m$, and $f_n = 0$ otherwise. Note that, if $\alpha(n) = m$, then $g \in \B_m = (\alpha^*(\hB))_n$, so $\Phi(n \conc g) \in \A_n$. Thus, $f \in (\forall_\alpha(h_a))_m$. Note that this reduction is uniform in $g$ and $m$, so $\forall_\alpha(\hA) \leq_\Mh \hB$.

Conversely, assume $\forall_\alpha(\hA) \leq_\Mh \hB$, say through $\Psi$. Let $n \in \omega$ and let $g \in \alpha^*(\hB)_n = \B_{\alpha(n)}$. Then $\Psi(\alpha(n) \conc g) \in (\forall_\alpha(\hA))_{\alpha(n)}$. Since clearly $\alpha(n) = \alpha(n)$, it follows that $\Psi(\alpha(n) \conc g)^{[n]} \in \A_n$. Again this reduction is uniform in $n$ and $g$, so $\hA \leq_\Mh \alpha^*(\hB)$.
\end{proof}

\begin{rem}\label{rem-proj}
Note that, if $\alpha: \omega \to \omega$ is is the projection to the first coordinate (i.e.\ the function mapping $\langle n,m \rangle$ to $n$), then
\[\forall_\alpha(\hA) \equiv_\Mh \left(\left\{\bigoplus_{m \in \omega} f_m \mid f_m \in \A_{\langle i,m\rangle}\right\}\right)_{i \in \omega}.\]
We will tacitly identify these two.
Similarly,
\[\exists_\alpha(\hA) \equiv_\Mh \left(\left\{m \conc f_m \mid f_m \in \A_{\langle i,m\rangle}\right\}\right)_{i \in \omega}.\]
\end{rem}

We now generalise this notion to include all the functions in our category $\bC$. We will define $\P_\M(\{1,\dots,n\})$ to be the $n$-fold product $\M^n$.

\begin{defi}\label{def-star-1}
Let $X,Y \in \{\{1\},\{1,2\},\dots\} \cup \{\omega\}$. Let $\alpha: X \to Y$ be computable.
Then $\alpha^*: \P_\M(Y) \to \P_\M(X)$ is the function given by
\[\alpha^*((\A))_i = \A_{\alpha(i)}.\]
\end{defi}

\begin{prop}\label{prop-star-1}
The functions from Definition \ref{def-star-1} are well-defined Brouwer algebra homomorphisms.
\end{prop}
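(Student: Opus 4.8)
The plan is to reduce everything to the case $Y = \omega$, which is already handled by Propositions~\ref{prop-star} and~\ref{prop-adjoint}, together with the easy case where $X$ and $Y$ are both finite. First I would observe that the case $X, Y \in \{\{1\},\{1,2\},\dots\}$ is essentially trivial: here $\P_\M(X) = \M^{|X|}$ and $\P_\M(Y) = \M^{|Y|}$, and $\alpha^*$ is just the evident coordinate-permuting-and-repeating map $(\A_1,\dots,\A_{|Y|}) \mapsto (\A_{\alpha(1)},\dots,\A_{\alpha(|X|)})$, which is manifestly a Brouwer algebra homomorphism since the operations on $\M^n$ are component-wise and each projection $\M^{|Y|} \to \M$ is a Brouwer algebra homomorphism. (That $\alpha$ is computable is automatic here and irrelevant.)

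Next I would handle the mixed cases. When $X = \omega$ and $Y = \{1,\dots,n\}$, a computable $\alpha: \omega \to \{1,\dots,n\}$ is the same data as a computable partition of $\omega$ into $n$ (computable) pieces, and $\alpha^*$ sends $(\A_1,\dots,\A_n) \in \M^n$ to the $\omega$-mass problem $(\A_{\alpha(i)})_{i\in\omega}$; well-definedness and the homomorphism property follow exactly as in the proof of Proposition~\ref{prop-star}, using a reduction that on input $i \conc f$ runs the witnessing functional for coordinate $\alpha(i)$ (this is where computability of $\alpha$ is used, to locate $\alpha(i)$), together with component-wiseness of the operations. When $X = \{1,\dots,n\}$ and $Y = \omega$, the map $\alpha^*$ sends $\hA \in \Mh$ to the tuple $(\A_{\alpha(1)},\dots,\A_{\alpha(n)}) \in \M^n$; again it is a well-defined homomorphism since a reduction $\hA \leq_\Mh \hB$ restricts to a reduction in each of the finitely many coordinates $\alpha(1),\dots,\alpha(n)$, and the operations are component-wise.

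The only remaining case, $X = Y = \omega$, is precisely Proposition~\ref{prop-star}. To package all four cases uniformly, I would note that in every case the proof has the same shape: $\alpha^*$ is defined by reindexing along $\alpha$, so preservation of $\oplus$, $\otimes$, $\to$, $0$ and $1$ is immediate from the fact that these are defined component-wise on both $\M^n$ and $\Mh$ (and on the latter the defining formulas in Proposition~\ref{prop-mh-brouwer} only involve the component at index $n$); and monotonicity/well-definedness is witnessed by the functional that, on input $i \conc f$, simulates the reduction at index $\alpha(i)$, which is total because $\alpha$ is computable. I do not anticipate a genuine obstacle here — the statement is bookkeeping over the case split — but if anything needs care it is being explicit that $\alpha^*$ respects the equivalence relation defining $\Mh$ (resp.\ $\M^n$) before speaking of it as a function on degrees, and checking that the implication operation, whose formula $(\hA \to \hB)_n = \{e \conc f \mid \forall g \in \A_n\,(\Phi_e(g\oplus f)\in\B_n)\}$ refers to $\A_n$ and $\B_n$, is genuinely reindexed correctly by $\alpha$, i.e.\ $(\alpha^*(\hA \to \hB))_i = (\hA\to\hB)_{\alpha(i)} = (\alpha^*\hA \to \alpha^*\hB)_i$.
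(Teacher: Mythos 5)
Your proposal is correct and follows essentially the same route as the paper, which simply proves this proposition ``as in Proposition~\ref{prop-star}'': reindexing along a computable $\alpha$ preserves the component-wise operations, and well-definedness is witnessed by the functional sending $i \conc f$ to the reduction at index $\alpha(i)$. Your explicit treatment of the mixed finite/infinite cases (in particular, assembling the finitely many component functionals into one uniform functional using computability of $\alpha$) just spells out the details the paper leaves implicit.
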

\begin{proof}
As in Proposition \ref{prop-star}.
\end{proof}

\begin{prop}\label{prop-adjoint-1}
Let $X,Y \in \{\{1\},\{1,2\},\dots\} \cup \{\omega\}$ and let $\alpha: X \to Y$ be computable. Then $\alpha^*$ has both left and right adjoints.
\end{prop}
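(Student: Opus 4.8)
The plan is to reduce Proposition~\ref{prop-adjoint-1} to Proposition~\ref{prop-adjoint}, which already handles the case $X = Y = \omega$, by a case analysis on the finiteness of $X$ and $Y$. First I would observe that the constructions of $\exists_\alpha$ and $\forall_\alpha$ given in the proof of Proposition~\ref{prop-adjoint} make sense verbatim for an arbitrary computable $\alpha: X \to Y$ between objects of $\bC$: one sets
\[(\exists_\alpha(\hA))_m = \{n \conc f \mid f \in \A_n \wedge \alpha(n) = m\}\]
and
\[(\forall_\alpha(\hA))_m = \left\{\bigoplus_{n} f_n \mid \forall n((\alpha(n) = m \wedge f_n \in \A_n) \vee (\alpha(n) \not= m \wedge f_n = 0))\right\},\]
where now the index $n$ ranges over $X$ and $m$ over $Y$ (and the direct sum is over the appropriate finite or countable index set, using a fixed computable pairing of $X$ into $\omega$). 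The verification that these are well-defined on $\P_\M(X)$ and that the adjunction $\alpha^*(\hA) \leq \hB \iff \hA \leq \exists_\alpha(\hB)$ (respectively $\hA \leq \alpha^*(\hB) \iff \forall_\alpha(\hA) \leq \hB$) holds is then word-for-word the argument already given, since nothing in those computations used that the domains were all of $\omega$; the Turing functionals built there remain partial computable when restricted to the relevant finite index sets, and the uniformity claims are unaffected.

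Alternatively — and this is perhaps cleaner to write — I would note that each finite object $\{1,\dots,k\}$ embeds into $\omega$ via the inclusion $\iota_k$, and that $\P_\M(\{1,\dots,k\}) = \M^k$ is a retract of $\Mh$: there is a Brouwer algebra section $s_k: \M^k \to \Mh$ (pad a $k$-tuple out to an $\omega$-sequence, say by repeating the last coordinate or filling with the top element) and a retraction $r_k = \iota_k^*: \Mh \to \M^k$ with $r_k \circ s_k = \mathrm{id}$. Given a computable $\alpha: X \to Y$ one extends it to a computable $\tilde\alpha: \omega \to \omega$ agreeing with $\alpha$ on (the image of) $X$, applies Proposition~\ref{prop-adjoint} to get $\exists_{\tilde\alpha}, \forall_{\tilde\alpha}$ on $\Mh$, and then conjugates by the appropriate sections and retractions to land the adjoints in the right $\M^k$. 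One checks the adjunction transports across the retraction because $r$ and $s$ are order homomorphisms with $r \circ s = \mathrm{id}$ and because $\tilde\alpha^*$ restricts correctly; this is a routine diagram chase.

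The main obstacle is not conceptual but bookkeeping: one must be careful that when $Y$ is finite the formula for $(\exists_\alpha(\hA))_m$ is only defined for $m \in Y$ (so the output genuinely lives in $\P_\M(Y) = \M^{|Y|}$ and not in $\Mh$), and symmetrically that when $X$ is finite the direct sum defining $(\forall_\alpha(\hA))_m$ is a finite join, so one should fix once and for all how finite joins of mass problems are coded as elements of $\omega^\omega$ and check this coding is compatible with the one used in Proposition~\ref{prop-mh-brouwer}. Once the coding conventions are pinned down, I expect the proof to be, as the authors indicate for the analogous Proposition~\ref{prop-star-1}, essentially ``as in Proposition~\ref{prop-adjoint}'', so I would state it that briefly, perhaps spelling out only the retraction argument for the finite cases.
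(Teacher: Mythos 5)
Your first (direct) argument is exactly what the paper does: its entire proof of Proposition~\ref{prop-adjoint-1} reads ``As in Proposition~\ref{prop-adjoint}'', i.e.\ the same formulas for $\exists_\alpha$ and $\forall_\alpha$ with the indices ranging over $X$ and $Y$, and your bookkeeping remarks about finite index sets are the right things to check. The alternative retraction argument is an unnecessary detour (and transporting adjunctions across a section--retraction pair with $s\circ r\neq\mathrm{id}$ would itself need justification), so the short direct version is the one to keep.
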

\begin{proof}
As in Proposition \ref{prop-adjoint}.
\end{proof}

Thus, everything we have done above leads us to the following definition.

\begin{defi}\label{def-meddoc}
Let $\bC$ be the category with objects $\{1\},\{1,2\},\dots$ and $\omega$ and functions the computable functions between them. Let $\P_\M$ be the functor sending a finite set $\{1,\dots,n\}$ to $\M^n$, $\omega$ to $\Mh$ and $\alpha$ to $\alpha^*$. We call this the \emph{hyperdoctrine of mass problems}.
\end{defi}

We now verify that the remaining conditions of Definition \ref{def-hyperdoctrine} hold for $\P_\M$.

\begin{thm}\label{thm-meddoc}
The functor $\P_\M$ from Definition \ref{def-meddoc} is a first-order hyperdoctrine.
\end{thm}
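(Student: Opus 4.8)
The proof mostly assembles the propositions of Sections \ref{sec-omega-med}--\ref{sec-med-hyperdoc}, the one point requiring separate attention being the Beck--Chevalley condition. First I would settle the structural preliminaries. The category $\bC$ has all finite products: $\{1,\dots,n\}\times\{1,\dots,m\}$ is $\{1,\dots,nm\}$, whereas $\{1,\dots,n\}\times\omega$ and $\omega\times\omega$ are the object $\omega$ via the computable pairing $\langle\cdot,\cdot\rangle$, with the projections and the pairing of two computable maps computable in each case; in particular each power $X^{n}$ exists. Moreover $\P_\M$ is a contravariant functor $\bC^{\mathrm{op}}\to\mathbf{Poset}$, since $\alpha^{*}(\hA)_i=\A_{\alpha(i)}$ gives $(\alpha\circ\beta)^{*}=\beta^{*}\circ\alpha^{*}$ and $\mathrm{id}^{*}=\mathrm{id}$ directly, and $\alpha^{*}$ is order preserving by Proposition \ref{prop-star-1}.

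Conditions (i), (ii) and the adjunction parts of (iii) and (iv) of Definition \ref{def-hyperdoctrine} are then immediate. For (i), $\P_\M(\omega)=\Mh$ is a Brouwer algebra by Proposition \ref{prop-mh-brouwer}, and $\P_\M(\{1,\dots,n\})=\M^{n}$ is a finite power of the Brouwer algebra $\M$, hence a Brouwer algebra under the coordinatewise operations. Condition (ii) is precisely Proposition \ref{prop-star-1}. For (iii), Proposition \ref{prop-adjoint-1} applied to the diagonal $\Delta_X\colon X\to X\times X$ gives a right adjoint $\exists_{\Delta_X}$ of $\Delta_X^{*}$; putting ${=_X}:=\exists_{\Delta_X}(0)$ and specialising the adjunction $\Delta_X^{*}(A)\leq\hB$ iff $A\leq\exists_{\Delta_X}(\hB)$ to $\hB=0$ yields exactly $\Delta_X^{*}(A)\leq 0$ iff $A\leq{=_X}$. (Unwinding $\exists_{\Delta_X}(0)$ and identifying $X\times X$ with $\omega$, one sees that $=_X$ has bottom degree at the indices $\langle n,n\rangle$ and top degree elsewhere, matching the equality predicate of Example \ref{ex-compl-br}.) For the adjunction part of (iv), given a product projection $\pi\colon\Gamma\times X\to\Gamma$ I would take $(\exists x)_\Gamma:=\exists_\pi$ and $(\forall x)_\Gamma:=\forall_\pi$ from Proposition \ref{prop-adjoint-1}, whose two adjunctions are the required ones.

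This leaves the Beck--Chevalley condition, which is the only genuinely new point and which I expect to be the main obstacle only in the bookkeeping sense. Fix $s\colon\Gamma\to\Gamma'$ together with the projections $\pi\colon\Gamma\times X\to\Gamma$ and $\pi'\colon\Gamma'\times X\to\Gamma'$, and note that the square formed by $s\times 1_X$, $\pi$, $\pi'$ and $s$ is a pullback in $\bC$. Substituting the explicit descriptions of $\exists_\pi$ and $\forall_\pi$ from Proposition \ref{prop-adjoint} and Remark \ref{rem-proj}, and of reindexing along $s$ and along $s\times 1_X$, and writing $\Gamma$, $X$ and $\Gamma\times X$ as $\omega$ via $\langle\cdot,\cdot\rangle$, both $(s^{*}\circ(\exists x)_{\Gamma'})(\hA)$ and $((\exists x)_\Gamma\circ(s\times 1_X)^{*})(\hA)$ take at index $\gamma$ the value $\{m\conc f_m\mid f_m\in\A_{\langle s(\gamma),m\rangle}\}$, and likewise both $(s^{*}\circ(\forall x)_{\Gamma'})(\hA)$ and $((\forall x)_\Gamma\circ(s\times 1_X)^{*})(\hA)$ take at index $\gamma$ the value $\{\bigoplus_m f_m\mid f_m\in\A_{\langle s(\gamma),m\rangle}\}$; hence both Beck--Chevalley squares commute. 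The only care needed is in tracking the pairing identifications and the minor variant in which $\Gamma$ or $X$ is finite, which is handled identically using the general $\exists_\alpha$ and $\forall_\alpha$ of Proposition \ref{prop-adjoint}. With (i)--(iv) verified, $\P_\M$ is a first-order hyperdoctrine.
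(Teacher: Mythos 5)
Your proposal is correct and follows essentially the same route as the paper: assemble Propositions \ref{prop-mh-brouwer}, \ref{prop-star-1} and \ref{prop-adjoint-1} for conditions (i)--(iii) and the adjunctions in (iv), then verify Beck--Chevalley by direct computation of both composites. The only (harmless) presentational difference is in the existential Beck--Chevalley square, where the paper computes with the raw formula for $\exists_\alpha$ from Proposition \ref{prop-adjoint} — so the two sides differ by a recoding of prefixes and explicit Turing functionals are exhibited in both directions — whereas you invoke the tacit identification of Remark \ref{rem-proj} to make the two sides coincide literally.
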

\begin{proof}
First note that $\bC$ is closed under all $n$-fold products, because $\omega^n$ is isomorphic to $\omega$ through some fixed computable function $\langle a_1,\dots,a_n\rangle$, and similarly $\{1,\dots,m\}^n$ is isomorphic to $\{1,\dots,mn\}$.

We now verify the conditions from Definition \ref{def-hyperdoctrine}. Condition \eqref{def-hyp-1} follows from Proposition \ref{prop-mh-brouwer}. Condition \eqref{def-hyp-2} follows from Proposition \ref{prop-star-1}. For condition \eqref{def-hyp-3}, use the fact that diagonal morphisms are computable together with Proposition \ref{prop-adjoint-1}.
From the same theorem we know that the projections have left and right adjoints. Thus, we only need to verify that the Beck-Chevalley condition holds for them to verify condition \eqref{def-hyp-4}. Consider the diagram

\centerline{
\xymatrix{\P_\M(\Gamma' \times X)\ar[r]_{(s \times 1_X)^*}\ar[d]_{(\exists x)_{\Gamma'}}&\P_\M(\Gamma \times X)\ar[d]_{(\exists x)_\Gamma}\\
    \P_\M(\Gamma') \ar[r]_{s^*}&\P_\M(\Gamma),}}
    
\noindent we need to show that it commutes.

We have:
\[((\exists x)_\Gamma(({s \times 1_X})^*((\A_i)_{i \in \Gamma' \times X})))_n = \{m \conc \langle n,m \rangle \conc f \mid f \in \A_{\langle s(n),m\rangle}\}\]
and
\[(s^*((\exists x)_{\Gamma'}((\A_i)_{i \in \Gamma' \times X})))_n = \{\langle s(n),m \rangle \conc m \conc f \mid f \in \A_{\langle s(n),m\rangle}\}\]
by Remark \ref{rem-proj}. Then $s^*((\exists x)_{\Gamma'}((\A_i)_{i \in \Gamma \times X})) \leq_\Mh ((\exists x)_\Gamma(({s \times 1_X})^*((\A_i)_{i \in \Gamma \times X})))$ through the functional sending $i \conc k \conc \langle n,m \rangle \conc f$ to $\langle s(n),m \rangle \conc m \conc f$, and the opposite inequality holds through the functional sending $n \conc \langle l,m \rangle \conc k \conc f$ to $m \conc \langle n,m \rangle \conc f$.

Next, consider

\centerline{\xymatrix{\P_\M(\Gamma' \times X)\ar[r]_{(s \times 1_X)^*}\ar[d]_{(\forall x)_{\Gamma'}}&\P_\M(\Gamma \times X)\ar[d]_{(\forall x)_\Gamma}\\
    \P_\M(\Gamma') \ar[r]_{s^*}&\P_\M(\Gamma),}}
    
\noindent we need to show that this also commutes.

Again by Remark \ref{rem-proj} we have:
\begin{align*}
((\forall x)_\Gamma(({s \times 1_X})^*((\A_i)_{i \in \Gamma' \times X})))_n &= \left\{\bigoplus_{m \in \omega} f_m \mid f_m \in \A_{\langle s(n),m\rangle}\right\}\\
&= (s^*((\forall x)_{\Gamma'}((\A_i)_{i \in \Gamma' \times X})))_n,
\end{align*}
as desired.
\end{proof}

For future reference, we state the following lemma which directly follows from the formula for the right adjoint given in the proof of Proposition \ref{prop-adjoint}.

\begin{lem}\label{lem-eq}
For any $X$, the equality $=_X$ in $\P_\M$ is given by:
\begin{align*}
(=_X)_{\langle n,m\rangle} = \begin{cases} \omega^\omega & \text{if } n=m\\
\emptyset & \text{otherwise.}
\end{cases}
\end{align*}
\end{lem}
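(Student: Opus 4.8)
The plan is to unwind Definition~\ref{def-hyperdoctrine}\,\eqref{def-hyp-3}: by definition the element $=_X \in \P_\M(X \times X)$ is the right adjoint of $\Delta_X^* = \P_\M(\Delta_X)$ evaluated at the bottom element $0 \in \P_\M(X)$, i.e.\ an element satisfying $\Delta_X^*(A) \leq 0 \Leftrightarrow A \leq {=_X}$ for all $A$. Since the diagonal morphism is computable, Propositions~\ref{prop-adjoint} and~\ref{prop-adjoint-1} already provide a full right adjoint $\exists_{\Delta_X}$ to $\Delta_X^*$; setting its argument to $0$ shows that $\exists_{\Delta_X}(0)$ has exactly the defining property of $=_X$, and such an element is unique up to $\equiv_\Mh$, so $=_X \equiv_\Mh \exists_{\Delta_X}(0)$. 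The lemma thus reduces to evaluating the explicit formula for $\exists_\alpha$ from the proof of Proposition~\ref{prop-adjoint} at $\alpha = \Delta_X$ and $\hA = 0$.

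For $X = \omega$, the product $X \times X$ is the object $\omega$ with the pairing $\langle\cdot,\cdot\rangle$ as its identification, and $\Delta_X : \omega \to \omega$ is the computable map $k \mapsto \langle k,k\rangle$ (the unique morphism with $\pi_1 \circ \Delta_X = \pi_2 \circ \Delta_X = 1_\omega$). Taking $(\omega^\omega)_{k \in \omega}$ as a representative of $0 \in \Mh$ and substituting into $(\exists_\alpha(\hA))_m = \{n \conc f \mid f \in \A_n \wedge \alpha(n) = m\}$ gives
\[(\exists_{\Delta_X}(0))_{\langle p,q\rangle} = \{n \conc f \mid f \in \omega^\omega \wedge \langle n,n\rangle = \langle p,q\rangle\}.\]
As $\langle\cdot,\cdot\rangle$ is a bijection, the equation $\langle n,n\rangle = \langle p,q\rangle$ is solvable in $n$ precisely when $p = q$, with unique solution $n = p$; so this set is $\emptyset$ when $p \neq q$, and equals $\{p \conc f \mid f \in \omega^\omega\}$ when $p = q$. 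The latter mass problem contains computable functions, hence has Medvedev degree $0$, i.e.\ $\{p \conc f \mid f \in \omega^\omega\} \equiv_\M \omega^\omega$. This is precisely the $\omega$-mass problem displayed in the statement, so $=_X$ is as claimed. The finite case $X = \{1,\dots,p\}$ is handled identically, with $\langle\cdot,\cdot\rangle$ replaced by the fixed bijection $X^2 \cong \{1,\dots,p^2\}$.

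There is no genuine obstacle here beyond bookkeeping; the one point worth stressing is that, as with every identity in $\P_\M$, the asserted formula for $=_X$ holds only up to $\equiv_\Mh$, and it is this that lets us discard the (computable) leading coordinate $n$ and the particular representative of $0$ and simply read off $\omega^\omega$ in the diagonal components.
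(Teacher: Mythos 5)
Your proposal is correct and is exactly the argument the paper intends: the paper's proof is a one-line pointer to the explicit formula for the right adjoint $\exists_\alpha$ in Proposition \ref{prop-adjoint} together with the definition of $=_X$ as the right adjoint of $\P(\Delta_X)$ at $0$, which is precisely the computation you carry out in full. Your additional remarks (uniqueness of the adjoint up to $\equiv_\Mh$, and that the identification only holds up to Medvedev equivalence since $\{p \conc f \mid f \in \omega^\omega\}$ merely has degree $0$) are accurate and consistent with the paper's conventions.
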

\begin{proof}
From the formula given for the right adjoint in the proof of Proposition \ref{prop-adjoint}, and the definition of $=_X$ in a first-order hyperdoctrine in Definition \ref{def-hyperdoctrine}.
\end{proof}

Finally, let us give an easy example of a structure in $\P_\M$. More examples will follow in the next sections.

\begin{ex}
Consider the language consisting of a constant $0$, a unary function $S$ and binary functions $+$ and $\cdot$. We define a structure $\gM$ in $\P_\M$. Let the universe $M$ be $\omega$. Take the interpretation to be the standard model, i.e.\ $\llbracket 0 \rrbracket = 0$, $\llbracket S \rrbracket (n) = S(n)$, $\llbracket + \rrbracket (n,m) = n+m$ and $\llbracket \cdot \rrbracket (n,m) = n\cdot m$. Then the sentences which hold in $\gM$ are exactly those which have a computable realiser in Kleene's second realisability model, see Kleene and Vesley \cite[p.\ 96]{kleene-vesley-1965}.

Thus, the hyperdoctrine of mass problems can be seen as an extension of Kleene's second realisability model with computable realisers. There is also a topos which can be seen as an extension of this model, namely the Kleene--Vesley topos, see e.g.\ van Oosten \cite{vanoosten-2008}. However, this topos does not follow Kolmogorov's philosophy that the interpretation of the universal quantifier should be uniform in the variable. On the other hand, a topos can interpret much more than just first-order logic.
\end{ex}

Note that our category $\bC$ only contains countable sets. On one hand this could be seen as a restriction, but on the other hand this should not come as a surprise since we are dealing with computability. That it is not that much of a restriction is illustrated by the rich literature on computable model theory dealing with computable, countable models.

\section{Theory of the hyperdoctrine of mass problems}\label{sec-theory}

Given a first-order language $\Sigma$, we wonder what the theory of $\P_\M$ is. In particular, we want to know: is the theory of $\P_\M$ equal to first-order intuitionistic logic IQC? To this, the answer is `no' in general: it is well-known that the weak law of the excluded middle $\neg \phi \vee \neg\neg \phi$ holds in the Medvedev lattice; therefore $\neg \phi \vee \neg\neg \phi$ holds in $\P_\M$ for sentences. However, for the Medvedev lattice we have the following remarkable result by Skvortsova:

\begin{thm}{\rm (Skvortsova \cite{skvortsova-1988})}
There is an $\A \in \M$ such that the propositional theory of $\M / \A$, the quotient of $\M$ by the principal filter generated by $\A$, is $\mathrm{IPC}$.
\end{thm}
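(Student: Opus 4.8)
The plan is to find a mass problem $\A \in \M$ such that the factor $\M/\A$ is simultaneously a Brouwer algebra whose validities include all of IPC (which is automatic) and which does \emph{not} validate any non-theorem of IPC. Since $\M$ itself validates the weak excluded middle $\neg\phi \vee \neg\neg\phi$, the quotient must be arranged so that this formula is no longer forced to be $0$; more generally we need the quotient to have enough ``independent'' structure to refute every intermediate formula that IPC does not prove. The standard route, which I would follow, is to build $\A$ so that the interval $[\,0_{\M/\A},\,1_{\M/\A}\,]$ contains, as a sub-Brouwer-algebra (or at least contains enough elements to separate IPC from every proper extension), a copy of the free Heyting algebra on countably many generators, or equivalently an algebra from which every Kripke countermodel can be read off. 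Completeness of IPC with respect to (countable) Heyting algebras then finishes the argument: a formula is in IPC iff it is valid in all Heyting algebras, so it suffices that $\M/\A$ admits a homomorphism onto, or embeds, a Heyting algebra refuting the given non-theorem.

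First I would recall the algebraic machinery: for $\A \in \M$, the factor $\M/\A$ is the quotient of the Brouwer algebra $\M$ by the principal filter $\{\B : \B \geq_\M \A\}$, and this is again a Brouwer algebra with top element the class of $\A$; a propositional formula is valid in $\M/\A$ iff its interpretation always equals the bottom class, i.e.\ lies strictly below $\A$ in a suitable sense. Next I would set up the combinatorial heart: choose a recursively presented family of mass problems $(\B_S)_S$ indexed by, say, finite binary strings or by nodes of a universal countable Kripke frame, with carefully controlled join/meet/implication behaviour modulo $\A$ — this is where one uses mass problems of the form ``diagonally non-computable relative to $X$'' or antichains of Turing degrees with prescribed independence, exactly the kind of construction that makes the Medvedev lattice flexible. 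The key lemma to prove is that for a suitable such $\A$, the subalgebra of $\M/\A$ generated by the classes of the $\B_S$ is free enough: no identity beyond those of IPC holds among them. Finally, given any $\phi \notin \mathrm{IPC}$, pick a finite Kripke countermodel, realise its Heyting algebra of upward-closed sets inside $\M/\A$ via the generators, and transport the refutation, concluding $\phi \notin \mathrm{Th}(\M/\A)$; combined with soundness (Proposition~\ref{prop-sound} in the propositional case), this gives $\mathrm{Th}(\M/\A) = \mathrm{IPC}$.

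The main obstacle, and the part requiring genuine work rather than bookkeeping, is the construction of $\A$ together with the verification that factoring by it simultaneously (a) kills the weak law of excluded middle and all other ``Medvedev-specific'' validities, yet (b) does not collapse the generating family — i.e.\ proving the freeness/independence lemma. This is delicate because implication in a quotient Brouwer algebra is computed in the quotient, not componentwise lifted from $\M$, so one must control how $\B_S \to \B_T$ behaves relative to $\A$ for \emph{all} pairs simultaneously; a naive choice of $\A$ either leaves too much of $\M$'s structure (so weak-LEM survives) or identifies too much (so the generators degenerate). I expect the resolution to hinge on a priority-style or forcing-style construction of $\A$ as a sufficiently generic mass problem, using the fact that $\M/\A$-validity of a propositional formula can be phrased as a $\Pi$-statement about Turing functionals, and then diagonalising against all such statements that correspond to non-theorems of IPC while preserving those corresponding to theorems. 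A secondary technical point to handle with care is that we only have countably many objects available and the free Heyting algebra on $\omega$ generators is countable but infinite, so the embedding/surjection must be arranged compatibly with the countable ambient structure — which is fine, but needs to be stated explicitly.
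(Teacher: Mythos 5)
First, note that the paper does not prove this statement at all: it is imported verbatim from Skvortsova \cite{skvortsova-1988} as a known result, so there is no in-paper proof to match yours against. Judged on its own terms, your outline identifies the correct high-level strategy --- soundness gives $\mathrm{IPC} \subseteq \mathrm{Th}(\M/\A)$ for free, and the converse should come from embedding, for each non-theorem $\phi$, a finite Brouwer algebra of upward-closed subsets of a Kripke countermodel into $\M/\A$ as a $(0,1)$-subalgebra, all for one fixed $\A$. Your mention of antichains of Turing degrees is also on target: the actual mechanism (which this paper itself deploys later, in the proof of Theorem \ref{thm-dec-fram}, citing Skvortsova's Lemma 2) is to take an antichain $f_0,f_1,\dots$ of Turing degrees, set $\D = \{g \mid \exists i\, (g \leq_T f_i)\}$, and work with the mass problems $C(\{f_i \mid i \in I\}) \cup \overline{\D}$; the meet-closure of a suitable finite subfamily is isomorphic to the Brouwer algebra of up-sets of the relevant finite poset, and the interval $[C(\{f_i \mid i \in \omega\}) \cup \overline{\D}, \overline{\D}]_\M$ (equivalently, after translation, a factor) hosts all of these simultaneously.

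The genuine gap is that everything substantive is deferred to a black box, and the one concrete mechanism you do propose for filling it would not work. A ``priority-style or forcing-style construction of $\A$ as a sufficiently generic mass problem, diagonalising against all statements corresponding to non-theorems of IPC while preserving those corresponding to theorems'' is not a viable plan: validity of a propositional formula in $\M/\A$ quantifies over all valuations and all Turing functionals, the requirements for different formulas are entangled through the algebraic structure (implication in the quotient depends globally on $\A$), and there is no known way to meet them one at a time by genericity. Skvortsova's proof is not a diagonalisation at all; it is an explicit algebraic embedding theorem verified by direct computation with the sets $C(F) \cup \overline{\D}$, exactly as reproduced in Section 6 of this paper. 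Two smaller points: validity in $\M/\A$ means the interpretation \emph{equals} the bottom element, not merely that it ``lies strictly below $\A$''; and you do not need to embed the free Heyting algebra on $\omega$ generators --- it suffices that every non-theorem is refuted in \emph{some} finite Brouwer algebra that embeds, which is what completeness of IPC with respect to a suitable recursive sequence of finite frames provides.
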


Thus, Skortsova's result tells us that there is a principal factor of the Medvedev lattice which captures exactly intuitionistic propositional logic. There is a natural way to extend principal factors to the hyperdoctrine of mass problems: given $\A$ in $\M$, let $\P_{\M / \A}$ be as in Definition \ref{def-meddoc}, but with $\M$ replaced by $\M / \A$, and $\Mh$ replaced by $\Mh / (\A,\A,\dots)$. It is directly verified that $\P_{\M / \A}$ is also a first-order hyperdoctrine. Thus, there is a first-order analogue to the problem studied by Skortsova in the propositional case: is there an $\A \in \M$ such that the sentences that hold in $\P_{\M / \A}$ are exactly those that are deducible in IQC?

First, note that equality is always decidable (i.e.\ $\forall x,y (x = y \vee \neg x=y)$ holds) by the analogue of Lemma \ref{lem-eq} (with $\omega^\omega$ replaced by $\A$). So, can we get the theory to equal IQC plus decidable equality?
Surprisingly, the answer turns out to be `no' in general, even when we look at intervals instead of just factors. The results of this section and the next section are summarised in Table \ref{table-results} below. We study several types of intervals, and for each approach we state a language and a formula $\phi$ which is not true in IQC plus decidable equality, but which is in the theory of every interval of this type.

\begin{table}[h!]
\begin{tabular}{cccc}
\toprule
Proposition & Type (Definition) & Language & Formula\\
\midrule
 & $\P_\M$ (\ref{def-meddoc}) & Unary $R$ & \eqref{formula1}\\
\ref{prop-cd} & $[\B,\A]_\M$ (\ref{def-simple-interval}) & Nullary $R$, unary $S$ & \eqref{formula2}\\
\ref{prop-cd-2} & $[\B,\A]_\M$ (\ref{def-simple-interval}) & Unary $R$ & \eqref{formula3}\\
\ref{thm-not-iqc} & $[\hiB,\A]_{\P_\M}$ (\ref{def-interval}) & Arithmetic & \eqref{formula4}\\
\bottomrule
\end{tabular}
\caption{Formulas not refutable in intervals.}
\label{table-results}
\end{table}
In this table, we use the following formulas:
\begin{equation}\label{formula1}
\forall x(R(x)) \vee \neg\forall x(R(x)),
\end{equation}
\begin{equation}\label{formula2}
\left(\forall x,y,z (x = y \vee x = z \vee y = z) \wedge \forall z(S(z) \vee R)\right) \to \forall z(S(z)) \vee R,
\end{equation}
\begin{equation}\label{formula3}
(\forall x (S(x) \vee \neg S(x))  \wedge \neg \forall x (\neg S(x))) \to \exists x (\neg\neg S(x)),
\end{equation}
\begin{equation}\label{formula4}
T \to \mathrm{Con}(\mathrm{PA}),
\end{equation}
where $T$ is some finite set of formulas derivable in Heyting arithmetic.

Recall that for a poset $X$ and $x,y \in X$ with $x \leq y$ we have that the interval $[x,y]_X$ denotes the set of elements $z \in X$ with $x \leq z \leq y$. If $\BB$ is a Brouwer algebra then so is $[x,y]_\BB$, with lattice operations as in $\BB$ and implication given by
\[u \to_{[x,y]_\BB} v = (u \to_\BB v) \oplus x.\]
If $x=0$, this gives us exactly the factor $\BB / y$.

We can use this to introduce a specific kind of intervals in the hyperdoctrine of mass problems.

\begin{defi}\label{def-simple-interval}
Let $\A,\B \in \M$. Then the \emph{interval} $[\B,\A]_{\P_\M}$ is the first-order hyperdoctrine defined as in Definition \ref{def-meddoc}, but with $\M$ replaced by $[\B,\A]_\M$, and $\Mh$ replaced by $[(\B,\B,\dots),(\A,\A,\dots)]_\Mh$.
\end{defi}

It can be directly verified that this is a first-order hyperdoctrine; if one is not convinced this also follows from the more general Theorem \ref{thm-ival-hyperdoc} below.

The axiom schema CD, consisting of all formulas of the form $\forall z(\phi(z) \vee \psi) \to \forall z(\phi(z)) \vee \psi$, has been studied because it characterises the Kripke frames with constant domain.
Our first counterexample is based on the fact that a specific instance of this schema holds in every structure in an interval of $\M$ with finite universe.

\begin{prop}\label{prop-cd}
Consider the language consisting of one nullary relation $R$, one unary relation $S$ and equality. Then for every interval $[\B,\A]_{\P_\M}$ the formula
\[\left(\forall x,y,z (x = y \vee x = z \vee y = z) \wedge \forall z(S(z) \vee R)\right) \to \forall z(S(z)) \vee R\]
is in $\Th\left([\B,\A]_{\P_\M}\right)$.
However, this formula is not in IQC plus decidable equality.
\end{prop}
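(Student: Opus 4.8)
The plan is to split the statement into its two halves: first showing the formula \eqref{formula2} lies in the theory of every interval $[\B,\A]_{\P_\M}$, and then showing it fails in IQC plus decidable equality by exhibiting a Kripke countermodel with decidable equality. The first half is where the real content lies. Fix an arbitrary structure $\gM$ in $[\B,\A]_{\P_\M}$ with universe $M \in \bC$, and let $\mathbf{s} = \llbracket S \rrbracket_\gM \in [\B,\A]_{\P_\M}(M)$ and $\mathbf{r} = \llbracket R \rrbracket_\gM \in [\B,\A]_\M$ be the interpretations of $S$ and $R$. Because $R$ is nullary, $\mathbf r$ is a single element of the interval $[\B,\A]_\M$. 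By the adjunction defining $\to$ in a Brouwer algebra, it suffices to prove
\[
\llbracket \forall x,y,z(x=y\vee x=z\vee y=z)\rrbracket \oplus \llbracket\forall z(S(z)\vee R)\rrbracket \;\geq\; \llbracket\forall z(S(z))\rrbracket \oplus \mathbf r,
\]
where the joins are meets of the corresponding disjunctions (recall $\vee$ is interpreted by $\otimes$, $\wedge$ by $\oplus$, and $\forall$ by the left adjoint $\forall_\alpha$). The key observation is that the antecedent $\forall x,y,z(x=y\vee x=z\vee y=z)$, interpreted via Lemma~\ref{lem-eq} (with $\omega^\omega$ replaced by $\A$ and $\emptyset$ as the top of the Brouwer algebra), equals $0$ in the interval precisely when $|M| \leq 2$, and otherwise equals the top element $\A$; but in either case, when we are given a solution to this conjunct together with a solution to $\forall z(S(z)\vee R)$, we can decide \emph{from finitely much data} which of the finitely many elements of $M$ is "repeated", and hence convert a uniform family of solutions-of-$S(z)$-or-$R$ into either a uniform family of solutions of $S(z)$, or a solution of $R$. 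Concretely: a solution of the left side gives, for each $z\in M$, an element of $0\conc\mathbf s_z \cup 1\conc\mathbf r$, i.e. a bit telling us whether we landed in $S(z)$ or in $R$, together with the corresponding witness; if for \emph{some} $z$ the bit says "$R$" we output that witness of $\mathbf r$ (on the right-hand meet we then need a solution of $\forall z(S(z))\vee R$, and we put it in the $R$-component), and if for \emph{every} $z$ the bit says "$S(z)$" — which is a $\Pi^0_1$ event but one we can sidestep because $M$ is \emph{finite}, so it is decidable after querying the finitely many bits — we output the assembled family, placing it in the $\forall z(S(z))$-component. The finiteness of $M$ is exactly what makes this a genuine Turing reduction; and the $x=y$ conjunct, which forces $|M|\leq 2$ or else trivialises the antecedent, is what guarantees finiteness without it being assumed outright. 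One has to be a little careful that the reduction is uniform and that the $\oplus x = \oplus\B$ correction in the interval implication does not obstruct anything — but since we are proving an inequality that already holds in $\Mh$ itself, adding $\B$ on the left only helps.

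For the second half, I would produce an explicit finite Kripke model over a two-node or fork-shaped frame with constant equality (so decidable equality holds trivially) refuting \eqref{formula2}. This is the standard counterexample to the constant-domain schema CD: take a root $w_0$ below two incomparable nodes $w_1,w_2$ (or just the two-node chain, adapting the domain sizes), let the common domain have exactly three elements $\{a,b,c\}$ so that $\forall x,y,z(x=y\vee x=z\vee y=z)$ is forced at every node once we note three elements among any three must have a coincidence — wait, that is false, so instead take the domain to have \emph{two} elements $\{a,b\}$, making the first conjunct valid everywhere while still leaving room to refute CD for $S$. Arrange the valuation of the unary $S$ so that at $w_1$ we have $S(a)$ but not $S(b)$, at $w_2$ we have $S(b)$ but not $S(a)$, and nowhere is $R$ forced; then $\forall z(S(z)\vee R)$ holds at $w_0$ (at each successor and each element, one disjunct holds) while neither $\forall z\,S(z)$ nor $R$ holds at $w_0$. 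This shows the implication fails at $w_0$, hence the formula is not valid in IQC, and since equality was interpreted as actual equality on a constant domain, it is not in IQC plus decidable equality either.

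The main obstacle is the first half: making precise that the interpretation of the antecedent really does let us bound $|M|$ \emph{and} that the resulting case split (some $z$ yields $R$ vs. all $z$ yield $S(z)$) can be performed by a single partial Turing functional uniformly across all structures in the interval. The subtlety is that $\forall z(S(z))$ in the hyperdoctrine is $\bigoplus_{z\in M}\mathbf s_z$, an infinite join if $M$ were infinite — so one genuinely needs the antecedent to collapse to $0$ (forcing $|M|\le 2$) for the argument to give a total reduction; when the antecedent is instead the top element $\A$, the left-hand meet is already $\A$, which dominates everything, so the inequality is trivial. Spelling out this dichotomy cleanly, and checking the uniformity of the reduction (including that it respects the $\oplus\B$ in the interval), is the part that needs care; everything else is bookkeeping with the definitions of $\otimes$, $\oplus$, $\forall_\alpha$ from Sections~\ref{sec-omega-med} and~\ref{sec-med-hyperdoc} and Lemma~\ref{lem-eq}.
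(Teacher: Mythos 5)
Your first half is essentially the paper's argument and is correct: by the interval analogue of Lemma \ref{lem-eq}, the conjunct $\forall x,y,z(x=y\vee x=z\vee y=z)$ is interpreted as the top element $\A$ unless $|M|\leq 2$, in which case it is the bottom element; when it is $\A$ the whole antecedent is $\A$ and the implication is trivial, and when $|M|\leq 2$ a solution of $\llbracket\forall z(S(z)\vee R)\rrbracket$ is a join of at most two functions whose leading bits can be inspected, so the case split (all bits $0$ versus some bit $1$) is performed by a single uniform Turing functional. This is exactly the paper's dichotomy.

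The second half, however, contains a genuine error. The formula is an instance of the constant-domain schema CD (strengthened by an extra hypothesis), and CD is \emph{valid} in every Kripke model with constant domains --- that is precisely why the schema carries that name. In your proposed model the domain $\{a,b\}$ is the same at all three nodes, so by persistence $S(a)$ cannot be forced at the root (it fails at $w_2$) and $S(b)$ cannot be forced at the root (it fails at $w_1$); since $R$ is nowhere forced, the root does not force $S(a)\vee R$, hence does not force $\forall z(S(z)\vee R)$, and the implication holds vacuously at the root. No valuation on a constant-domain frame can work. To refute the formula you must let the domain grow: the paper takes the root's domain to be $\{1\}$ and the domains at the two incomparable successors $a,b$ to be $\{1,2\}$, with $S(1)$ true everywhere, $S(2)$ true only at $a$, and $R$ true only at $b$. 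Then the root forces $\forall z(S(z)\vee R)$ because at the root only the element $1$ must be accounted for, while neither $\forall z\,S(z)$ nor $R$ is forced there; the first conjunct still holds since every domain has at most two elements, and equality remains decidable.
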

\begin{proof}
Let $\gM$ be a structure in $[\B,\A]_{\P_\M}$. Note that by the analogue of Lemma \ref{lem-eq} we know that if $\forall x,y,z (x = y \vee x = z \vee y = z)$ does not hold, then it gets interpreted as $\A$ and then the formula certainly holds. However, $\forall x,y,z (x = y \vee x = z \vee y = z)$ can only hold if $\M$ has at most two elements. Let us first assume $\gM$ has two elements. Let $f$ be an element of $\llbracket \forall z(S(z) \vee R) \rrbracket$. Then $f = f_1 \oplus f_2$, with $f_1 \in \llbracket S(z) \vee R \rrbracket_1$ and $f_2 \in \llbracket S(z) \vee R \rrbracket_2$.\footnote{Note that $\llbracket S(z) \vee R \rrbracket \in \M^2$, so $\llbracket S(z) \vee R \rrbracket_1$ and $\llbracket S(z) \vee R \rrbracket_2$ denote the first and second component.}
There are two cases: either both $f_1$ and $f_2$ start with a $0$ and we can compute an element of $\llbracket\forall z S(z) \rrbracket$, or one of them starts with a $1$ in which case we can compute an element of $\llbracket R \rrbracket$. Since the reduction is uniform in $f$, we see that
\[\llbracket\forall z(S(z) \vee R)\rrbracket \geq_\M \llbracket\forall z(S(z)) \vee R\rrbracket,\]
and thus the formula given in the statement of the proposition holds. If $\gM$ has only one element, a similar proof yields the same result.

To show that the formula is not in IQC, consider the following Kripke frame.
\begin{center}
\begin{tikzpicture}[scale=.7]
  \node (a) at (-1,0) {$a$};
  \node (b) at (1,0) {$b$};
  \node (zero) at (0,-2) {$0$};
  \draw (zero) -- (a);
  \draw (zero) -- (b);
\end{tikzpicture}
\end{center}

Let $\gK_0$ have universe $\{1\}$ and let $\gK_a,\gK_b$ have universe $\{1,2\}$. Let $S(1)$ be true everywhere, let $S(2)$ be true only at $a$ and let $R$ be true only at $b$. Then $\gK$ is a Kripke model refuting the formula in the statement of the proposition.
\end{proof}

Note that the schema CD can be refuted in $\P_\M$, as long as we allow models over infinite structures: namely, let $\phi(z) = S(z)$ and $\psi(z) = R$. We build a structure $\gM$ with $\omega$ as universe. Let $A$ be a computably independent set, i.e.\ for every $n \in \omega$ we have $A \setminus A^{[n]} \not\geq_T A^{[n]}$. Let $\llbracket S \rrbracket_n = A^{[n+1]}$ and let $\llbracket R \rrbracket = A^{[0]}$. Towards a contradiction, assume $\mathrm{CD}$ holds in this structure and let $\Phi$ witness $\llbracket \forall z(S(z) \vee R) \rrbracket \geq_\M \llbracket\forall z(S(z)) \vee R\rrbracket$. Now the function $f$ given by $f^{[n]} = 0 \conc A^{[n+1]}$ is in $\llbracket \forall z(S(z) \vee R) \rrbracket$, so $\Phi(f) \in \llbracket\forall z(S(z)) \vee R \rrbracket$. Because $A$ is computably independent $f$ cannot compute $A^{[0]}$, so $\Phi(f)(0) = 0$. Let $u$ be the use of this computation and let $g$ be the function such that $g^{[n]} = f^{[n]}$ for $n \leq u$ and $g^{[n]} = A^{[0]}$ for $n > u$. Then $\Phi(g)(0) = 0$ so $g$ computes $A^{[u+1]}$, contradicting $A$ being computably independent.

Thus, one might object to our counterexample for being too unnatural by restricting the universe to be finite. However, the next example shows that even without this restriction we can find a counterexample.

\begin{prop}\label{prop-cd-2}
Consider the language consisting of a unary relation $R$. Then for every interval $[\B,\A]_{\P_\M}$ the formula
\[(\forall x (S(x) \vee \neg S(x))  \wedge \neg \forall x (\neg S(x))) \to \exists x (\neg\neg S(x)).\]
is in $\Th\left([\B,\A]_{\P_\M}\right)$.
However, this formula is not in IQC.
\end{prop}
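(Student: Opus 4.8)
The plan is to prove the two halves separately: that $\phi$ (using $S$ for the unary relation, as in the displayed formula) is refuted in some Kripke model, hence not in IQC by Kripke completeness, and that $\phi$ is satisfied in every structure in every interval $[\B,\A]_{\P_\M}$.

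For the first half I would use the Kripke frame $(\omega,\le)$ with root $0$, increasing domains $D_n=\{0,1,\dots,n\}$, and atomic forcing $n\Vdash S(k)$ iff $1\le k\le n$ (which is persistent). Then: $\forall x(S(x)\vee\neg S(x))$ is forced at every node, since at a node $n$ and for $k\in D_n$ either $k\ge1$ and $S(k)$ is already forced, or $k=0$ and $\neg S(0)$ is forced because $S(0)$ is forced nowhere; $\neg\forall x\,\neg S(x)$ is forced at $0$, since $\forall x\,\neg S(x)$ is forced at no node (at $n$, taking $m=\max(n,1)$ one has $1\in D_m$ and $m\not\Vdash\neg S(1)$); and $\exists x\,\neg\neg S(x)$ is not forced at $0$, since $D_0=\{0\}$ and $0\not\Vdash\neg\neg S(0)$ because $S(0)$ is forced at no node above $0$. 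Thus the premise holds at $0$ but the conclusion does not, so $\phi$ fails at $0$, and $\phi\notin\mathrm{IQC}$. Morally this is the failure of Markov's principle over a varying-domain model: under decidability $\neg\neg S(x)$ is equivalent to $S(x)$, so $\phi$ amounts to $\neg\neg\exists x\,S(x)\to\exists x\,S(x)$ for decidable $S$.

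For the second half, fix a structure $\gM$ in $[\B,\A]_{\P_\M}$ with universe $M$ and put $\mathcal S_n=\llbracket S\rrbracket_n$; as $\llbracket S(x)\rrbracket$ these form an element of the Brouwer algebra $[(\B,\B,\dots),(\A,\A,\dots)]_{\Mh}$. Since $\phi$ is a sentence, $\llbracket\phi\rrbracket$ is computed in the Brouwer algebra $[\B,\A]_\M$, and $\llbracket\phi\rrbracket=0$ iff $\llbracket\text{premise}\rrbracket\ge\llbracket\text{conclusion}\rrbracket$, i.e.\ iff some Turing functional sends each solution of $\llbracket\forall x(S(x)\vee\neg S(x))\rrbracket\oplus\llbracket\neg\forall x\,\neg S(x)\rrbracket$ to a solution of $\llbracket\exists x\,\neg\neg S(x)\rrbracket$. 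Unwinding via Remark \ref{rem-proj} and the formulas in Proposition \ref{prop-mh-brouwer} (I will do the case $M=\omega$; finite $M$ is the same but easier): a solution of $\llbracket\forall x(S(x)\vee\neg S(x))\rrbracket=\bigoplus_n(\mathcal S_n\otimes\neg\mathcal S_n)$ codes, uniformly in $n$, a bit together with a solution of $\mathcal S_n$ (bit $0$) or of $\neg\mathcal S_n$ (bit $1$); and a solution of $\llbracket\neg\forall x\,\neg S(x)\rrbracket$ codes in particular a Turing functional turning any solution of $\llbracket\forall x\,\neg S(x)\rrbracket=\bigoplus_n\neg\mathcal S_n$ into a solution of $\bot$, i.e.\ an element of $\A$. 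The two auxiliary facts needed are purely algebraic: in $[(\B,\dots),(\A,\dots)]_{\Mh}$ one has $\llbracket\neg\neg S(x)\rrbracket\le\llbracket S(x)\rrbracket$ and $\llbracket\neg\neg S(x)\rrbracket\le 1$, which yield functionals, uniform in $n$, converting a solution of $\mathcal S_n$ (respectively of $\A$) into a solution of $\neg\neg\mathcal S_n$.

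Given a solution $p\oplus q$ of the premise I would then dovetail two processes. One searches for the least $n$ with the $n$-th bit of $p$ equal to $0$; if it finds such an $n^*$ it has a solution of $\mathcal S_{n^*}$, hence of $\neg\neg\mathcal S_{n^*}$, solving the $n^*$-th component of $\llbracket\exists x\,\neg\neg S(x)\rrbracket=\bigotimes_n\neg\neg\mathcal S_n$. The other process, under the hypothesis that all bits of $p$ are $1$, assembles the coded solutions of the $\neg\mathcal S_n$ into a solution of $\llbracket\forall x\,\neg S(x)\rrbracket$, feeds it to the functional coded by $q$ to get an element of $\A$, and from that a solution of $\neg\neg\mathcal S_0$, solving the $0$-th component. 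Exactly one case occurs, so in either case $\llbracket\exists x\,\neg\neg S(x)\rrbracket$ is solved. The hard part — where essentially all the work is — is packaging this as a single total Turing functional: the two processes commit to different coordinates ($n^*$ versus $0$) of the solution of $\bigotimes_n\neg\neg\mathcal S_n$, and which one applies is not settled at any finite stage, so the commitment must be staged with care (never touching the tail of $p_n$ before its leading bit has been read, and interleaving the search with the $q$-computation so that no coordinate is emitted prematurely while the $q$-computation could still stall on a bit-$0$ coordinate). The algebra and the shapes of the solutions are routine; this bookkeeping is the obstacle.
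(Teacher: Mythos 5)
Your Kripke countermodel is correct: the frame $(\omega,\le)$ with growing domains and the element $0$ that is never put into $S$ does refute the formula at the root, and your verification of the three subformulas is sound. The paper uses a smaller model (a two-node frame with domains $\{1\}\subseteq\{1,2\}$, $S(1)$ false everywhere and $S(2)$ true only at the top node), but both work, so this half is fine, as is your observation that the formula is a Markov-principle instance for decidable predicates.

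For the positive half you have reproduced exactly the paper's case split: if some component of a solution $f$ of $\llbracket\forall x(S(x)\vee\neg S(x))\rrbracket$ carries choice bit $0$, then $\tilde f_n\in\llbracket S(x)\rrbracket_n$ and $n\conc e\conc\tilde f_n\in\llbracket\exists x\,\neg\neg S(x)\rrbracket$, where $e$ is the fixed index with $\Phi_e((k\conc h)\oplus\tilde f_n)=\Phi_k(\tilde f_n\oplus h)\in\A$; and if every choice bit is $1$, then $f$ assembles into a solution of $\llbracket\forall x\,\neg S(x)\rrbracket$, the solution $g$ of its negation converts this into an element of the top $\A$, from which any element of the conclusion is uniformly computable. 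This is precisely the argument the paper gives. But you then declare the packaging of these two cases into one total Turing functional to be ``the hard part --- where essentially all the work is'' and stop. That is a genuine gap rather than deferrable bookkeeping, and your own description of the difficulty shows why: the first symbol of any element of $\llbracket\exists x\,\neg\neg S(x)\rrbracket$ is the existential witness, the two cases demand different witnesses ($n^{*}$ versus an arbitrary one), and the dichotomy ``some bit is $0$'' versus ``all bits are $1$'' is $\Sigma^0_1$ versus $\Pi^0_1$ in the oracle, so no finite amount of computation ever confirms that the second case obtains. A submission that names this as the obstacle and does not resolve it has not proved the reduction. The paper's own proof consists of exactly this case distinction and treats the combination as immediate; to go beyond your current draft you must either exhibit the combined functional explicitly (specifying when the witness coordinate is emitted and why the emitted value can always be completed to a genuine element of $\llbracket\neg\neg S(x)\rrbracket_{n}$ even when the other case later materialises), or give some other argument that the two partial strategies cohere. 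As it stands, the proposal ends where the proof has to begin.
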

\begin{proof}
Towards a contradiction, assume $\gM$ is some structure satisfying the formula. Let $f \in \llbracket\forall x (S(x) \vee \neg S(x))\rrbracket$ and let $g \in \llbracket \neg \forall x (\neg S(x)) \rrbracket$. If for every $n \in \gM$ we have $f^{[n]}(0) = 1$ then $f$ computes an element of $\llbracket \forall x \neg S(x) \rrbracket$, which together with $g$ computes an element of the top element $\A$ so then we are done. Otherwise we can compute from $f$ some $n \in \gM$ with $f^{[n]}(0) = 0$. Let $\tilde{f}$ be $f^{[n]}$ without the first bit. Let $e$ be an index for the functional sending $(k \conc h_1) \oplus h_2$ to $\Phi_k(h_2 \oplus h_1)$. Then if $k \conc h_1 \in \llbracket \neg S(x) \rrbracket_n$ we have
\[\Phi_e((k \conc h_1) \oplus \tilde{f}) = \Phi_k(\tilde{f} \oplus h_1) \in \A,\]
so $e \conc \tilde{f} \in \llbracket \neg\neg S(x) \rrbracket_n$. Therefore $n \conc e \conc \tilde{f} \in \llbracket \exists x (\neg\neg S(x))\rrbracket$. So 
\[\llbracket\forall x (S(x) \vee \neg S(x))\rrbracket \oplus \llbracket \neg \forall x (\neg S(x)) \rrbracket \geq_\Mh \llbracket \exists x (\neg\neg S(x))\rrbracket.\]

To show that the formula is not in IQC, consider the following Kripke frame.
\begin{center}
\begin{tikzpicture}[scale=.7]
  \node (a) at (0,0) {$a$};
  \node (zero) at (0,-2) {$0$};
  \draw (zero) -- (a);
\end{tikzpicture}
\end{center}

Let $\gK_0$ have universe $\{1\}$ and let $\gK_a$ have universe $\{1,2\}$. Let $S(1)$ be false everywhere and let $S(2)$ be true only at $a$. Then $\gK$ is a Kripke model refuting the formula in the statement of the proposition.
\end{proof}

What the last theorem really says is not that our approach is hopeless, but that instead of looking at intervals $[\B,\A]_{\P_\M}$, we should look at more general intervals. Right now we are taking the bottom element $\B$ to be the same for each $i \in \omega$. Compare this with what happens if in a Kripke model we take the domain at each point to be the same: then $\mathrm{CD}$ holds in the Kripke model. Proposition \ref{prop-cd} should therefore not come as a surprise (although it is surprising that the full schema can be refuted). Instead, we should allow $\B_i$ to vary (subject to some constraints); roughly speaking $\B_i$ then expresses the problem of `showing that $i$ exists' or `constructing $i$'. This motivates the next definition.

\begin{defi}\label{def-interval}
Let $\A \in \M$ and $\hiB \in \Mh$ be such that $(\A,\A,\dots) \geq_\Mh \hB \geq_\Mh (\B_{-1},\B_{-1},\dots)$ and such that $\B_i \not\geq_\M \A$ for all $i \geq -1$. We define the \emph{interval} $[\hiB,\A]_{\P_\M}$ as follows. Let $\bC$ be the category with as objects $\{\{1,\dots,m\}^n \mid n,m \in \omega\} \cup \{\omega,\omega^2,\dots\}$.
\begin{itemize}
\item Let the morphisms in $\bC$ be the computable functions $\alpha$ which additionally satisfy that $\B_{y} \geq_\M \B_{\alpha(y)}$ for all $y \in \dom(\alpha)$ uniformly in $y$, where we define $\B_{(y_1,\dots,y_n)}$ to be $\B_{y_1} \oplus \dots \oplus \B_{y_n}$.
\item We send $\{1,\dots,m\}^n$ to the Brouwer algebra
\[\left[(\B_{a_1} \oplus \dots \oplus \B_{a_n})_{(a_1,\dots,a_n) \in \{1,\dots,m\}^n},(\A,\A,\dots)\right]_{\M^{mn}},\]
and we send $\omega^n$ to the Brouwer algebra 
\[[(\B_{a_1} \oplus \dots \oplus \B_{a_n})_{\langle a_1,\dots,a_n\rangle \in \omega},(\A,\A,\dots,\A)]_\Mh.\]
\item We send every morphism $\alpha: Y \to Z$ to $\P_\M(\alpha) \oplus (\B_i)_{i \in Y}$, i.e.\ the function sending $x$ to $\P_\M(\alpha)(x) \oplus (\B_i)_{i \in Y}$, where we implicitly identify $\omega^n$ with $\omega$ and $\{1,\dots,m\}^n$ with $\{1,\dots,mn\}$ through some fixed computable bijection.
\end{itemize}
\end{defi}

\begin{thm}\label{thm-ival-hyperdoc}
The interval $[\hiB,\A]_{\P_\M}$ is a first-order hyperdoctrine.
\end{thm}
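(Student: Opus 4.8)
The plan is to verify the conditions of Definition~\ref{def-hyperdoctrine} one by one, reducing each to the corresponding fact already established for $\P_\M$ (Theorem~\ref{thm-meddoc}) together with a uniform bookkeeping argument to handle the extra $\oplus\,(\B_i)_{i\in Y}$ summand. First I would check that $\bC$ is a well-defined category: the identity maps clearly satisfy $\B_y\geq_\M\B_{\alpha(y)}$ trivially, and closure under composition follows because if $\B_y\geq_\M\B_{\alpha(y)}$ uniformly and $\B_{z}\geq_\M\B_{\beta(z)}$ uniformly then $\B_y\geq_\M\B_{\beta(\alpha(y))}$ uniformly by composing the witnessing functionals. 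Closure under finite products is inherited from $\P_\M$ via the fixed computable bijections $\omega^n\cong\omega$ and $\{1,\dots,m\}^n\cong\{1,\dots,mn\}$, noting that these bijections respect the $\B_{(y_1,\dots,y_n)}=\B_{y_1}\oplus\dots\oplus\B_{y_n}$ convention up to $\equiv_\M$.

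Next I would check that $[\hiB,\A]_{\P_\M}$ is a functor into $\mathbf{Poset}$ whose values are Brouwer algebras. The values are intervals of Brouwer algebras, hence Brouwer algebras with the modified implication $u\to v=(u\to_\M v)\oplus(\B_i)_i$, giving condition~\eqref{def-hyp-1}. For condition~\eqref{def-hyp-2} and functoriality, the key point is that $\alpha^*(x)\oplus(\B_i)_{i\in Y}$ actually lands in the right interval: since $\alpha$ is a morphism of $\bC$, we have $(\B_i)_{i\in Y}\geq_\M(\B_{\alpha(i)})_{i\in Y}$, and $(\B_{\alpha(i)})_{i\in Y}$ is exactly the bottom element of the interval assigned to $Y$ pulled back appropriately, so the result dominates that bottom element; it is dominated by $(\A,\A,\dots)$ because $\alpha^*$ preserves the top and $\B_i\not\geq_\M\A$-type bounds give $(\A,\dots,\A)\oplus(\B_i)_i\equiv_\Mh(\A,\dots,\A)$. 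That this map is a Brouwer-algebra homomorphism on the interval is a routine computation using that $\P_\M(\alpha)$ is one on $\Mh$ and the formula for $\to$ in an interval; functoriality ($(\alpha\circ\beta)^*=\beta^*\circ\alpha^*$ up to the $\oplus\B$ correction) follows from the analogous identity in $\P_\M$ together with the observation that the various $\B$-summands absorb into one another since $(\B_i)_{i\in Y}$ already dominates all the intermediate ones.

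For condition~\eqref{def-hyp-3}, I would exhibit the equality predicate: by the interval formula, the right adjoint to $\Delta_X^*$ at $0$ in the interval is the right adjoint in $\M^{mn}$ (or $\Mh$) capped into the interval, which by Lemma~\ref{lem-eq} is the sequence that is $\A$ off the diagonal and $\B_{(x,x)}=\B_x\oplus\B_x\equiv_\M\B_x$ on the diagonal; one checks directly that this is indeed the required adjoint using that $\B_x\not\geq_\M\A$. For condition~\eqref{def-hyp-4}, the adjoints to the projections are those of Proposition~\ref{prop-adjoint} corrected by the $\oplus\B$ term: $(\exists x)_\Gamma$ is as before (its values already dominate the appropriate $\B$'s since they contain strings of the form $n\conc f$ with $f\in\A_{\dots}$ and one checks the bottom element condition), and $(\forall x)_\Gamma$ is the old one joined with the bottom element of the target interval. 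The adjunction identities then follow from the $\P_\M$ case plus the defining property of $\to$ in an interval. Finally the Beck--Chevalley condition transfers verbatim from the proof of Theorem~\ref{thm-meddoc}, since adding the same $\B$-summand to all four corners of the commuting square does not affect commutativity (the correcting summands on the two composites agree because $(\B_i)_{i\in\Gamma}$ dominates $(\B_{s(i)})_{i\in\Gamma}$ uniformly).

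The main obstacle I expect is \emph{not} any single verification but rather checking carefully that every map stays inside the prescribed interval and that the various $\B$-bookkeeping is genuinely uniform --- in particular, that the left adjoint $(\forall x)_\Gamma$ applied to an element of the $\omega^2$-interval produces something above the correct bottom element $(\B_{a})_{a\in\omega}$ of the $\omega$-interval, which is where the hypothesis that $\alpha$ is a $\bC$-morphism (and hence that the $\B$'s decrease uniformly along the projection, viewed as a morphism) is really used. The hypothesis $\B_i\not\geq_\M\A$ is what guarantees the intervals are nondegenerate and that $\A\oplus\B_i\equiv\A$, which is needed repeatedly to see that top elements are preserved; I would flag this explicitly. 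All other steps are mechanical adaptations of Propositions~\ref{prop-mh-brouwer}, \ref{prop-star-1}, \ref{prop-adjoint-1} and Theorem~\ref{thm-meddoc}.
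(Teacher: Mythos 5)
Your overall strategy is the same as the paper's: verify the conditions of Definition~\ref{def-hyperdoctrine} one by one, reusing the $\P_\M$ constructions and tracking the extra $\oplus\,(\B_i)$ summands. The right adjoint, the homomorphism property, the equality predicate and the closure of $\bC$ under products are all handled essentially as in the paper. However, there is one genuine gap: your candidate for the left adjoint is wrong. You propose taking $\forall_\alpha((\C_i)_{i\in X})\oplus(\B_j)_{j\in Y}$, i.e.\ ``the old one joined with the bottom element of the target interval''. This does not satisfy the adjunction. For $\alpha\colon X\to Y$ the required equivalence is
\[
L((\C_i)_{i\in X})\leq_\Mh(\D_j)_{j\in Y}\quad\Longleftrightarrow\quad(\C_i)_{i\in X}\leq_\Mh(\D_{\alpha(i)}\oplus\B_i)_{i\in X},
\]
and with your $L$ the left-hand side reduces (since $\D$ lies above the bottom of the target interval) to $(\C_i)_i\leq_\Mh(\D_{\alpha(i)})_i$, which is strictly stronger than the right-hand side: take $\C$ and $\D$ to be the bottom elements of their intervals, so the right-hand side holds trivially, while the left-hand side demands $\B_i\leq_\M\B_{\alpha(i)}$ uniformly, whereas the morphism condition only gives the reverse inequality $\B_i\geq_\M\B_{\alpha(i)}$. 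The correct left adjoint is $(\C_i)_{i\in X}\mapsto\forall_\alpha\bigl((\B_i\to_\M\C_i)_{i\in X}\bigr)\oplus(\B_j)_{j\in Y}$: one must first relativise each component by the implication from its own bottom element, exactly as the interval implication is $(u\to_\M v)\oplus\B$; the chain of equivalences then goes through via $(\B_i\to_\M\C_i)_i\leq(\D_{\alpha(i)})_i\Leftrightarrow(\C_i)_i\leq(\B_i\oplus\D_{\alpha(i)})_i$.

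This error propagates into your treatment of Beck--Chevalley, which you claim ``transfers verbatim''. It does not: for the $\forall$ square one must compute with the relativised adjoint, and the verification uses nontrivial equivalences such as $(\B_n\oplus\B_m)\to(\B_n\oplus\B_m\oplus\C_{(s(n),m)})\equiv_\M\B_m\to\C_{(s(n),m)}$ modulo $\oplus\,\B_n$, together with the uniform reduction of $\B_{s(n)}$ to $\B_n$; even the $\exists$ square needs the extra observation that $\C_{(s(n),m)}$ uniformly computes an element of $\B_m$. You correctly identify that the delicate point lies with $\forall$ and the bookkeeping of bottom elements, but the issue is not merely that the output must land in the interval (joining with the bottom fixes that trivially); it is that the adjunction identity itself forces the relativisation $\B_i\to_\M\C_i$ inside $\forall_\alpha$.
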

\begin{proof}
First, note that the base category $\bC$ is closed under $n$-fold products: indeed, the $n$-fold product of $Y$ is just $Y^n$, and the projections are computable functions satisfying the extra requirement. Furthermore, if $\alpha_1,\dots,\alpha_n: Y \to Z$ are in $\bC$, then $(\alpha_1,\dots,\alpha_n): Y^n \to Z$ in in $\bC$ because for all $y_1,\dots,y_n \in Y$ we have
\[\B_{(y_1,\dots,y_n)} = \B_{y_1} \oplus \dots \oplus \B_{y_n} \geq_\M \B_{\alpha(y_1)} \dots \oplus \B_{\alpha(y_n)} = \B_{(\alpha_1,\dots,\alpha_n)(y_1,\dots,y_n)},\]
with reductions uniform in $y_1,\dots,y_n$.
Finally, for each $\alpha: Y \to Z$ in $\bC$ we have that $\P_\M(\alpha) \oplus 0_Y$ (where $0_Y$ is the bottom element in the Brouwer algebra to which $Y$ gets mapped) is a Brouwer algebra homomorphism: that joins and meets are preserved follows by distributivity, that the top element is preserved follows directly from $(\A,\A,\dots) \geq_\Mh \hB \geq_\M (\B_{-1},\B_{-1},\dots)$ and that the bottom element is preserved follows from the assumption that $\B_{y} \geq_\M \B_{\alpha(y)}$ for all $y \in \dom(\alpha)$ uniformly in $y$. That implication is preserved is more work: let $\alpha: X \to Y$. Throughout the remainder of the proof we will implicitly identify $\omega^n$ with $\omega$ and $\{1,\dots,m\}^n$ with $\{1,\dots,mn\}$ through some fixed bijection $\langle a_1,\dots,a_n\rangle$. Now:
\begin{align*}
&((\P_\M(\alpha)((\C_i)_{i \in Y}))_j \oplus \B_j) \to_{[\B_j,\A]_\M} ((\P_\M(\alpha)((\D_i)_{i \in Y}))_j \oplus \B_j)\\
=\;\;\;\; &((\C_{\alpha(j)} \oplus \B_j) \to (\D_{\alpha(j)} \oplus \B_j)) \oplus \B_j\\
\equiv_\M\; &(\C_{\alpha(j)} \to \D_{\alpha(j)}) \oplus \B_j\\
=\;\;\;\; &(\P_\M(\alpha)((\C_i)_{i \in Y} \to (\D_i)_{i \in Y}))_j \oplus \B_j,
\end{align*}
with uniform reductions.

Thus, we need to verify that the product projections have adjoints; in fact, we will show that every morphism $\alpha$ in the base category $\bC$ has adjoints. Let $\alpha: X \to Y$. We claim: $\P_\M(\alpha) \oplus (\B_i)_{i \in X}$ has as a right adjoint $\exists_\alpha$ and as a left adjoint the map sending $(\C_i)_{i \in X}$ to $\forall_\alpha((\B_i \to_\M \C_{i})_{i \in X}) \oplus (\B_i)_{i \in Y}$, where $\exists_\alpha$ and $\forall_\alpha$ are as in Proposition \ref{prop-adjoint}. Indeed, we have:
\begin{align*}
(\D_i)_{i \in Y} \leq_\Mh \exists_\alpha((\C_i)_{i \in X}) &\Leftrightarrow
(\D_{\alpha(i)})_{i \in X} \leq_\Mh (\C_i)_{i \in X}
\intertext{and because $(\C_i)_{i \in X} \in [(\B_i)_{i \in X},(\A,\A,\dots)]_\Mh$:}
\Leftrightarrow (\B_i)_{i \in X} \oplus (\D_{\alpha(i)})_{i \in X} \leq_\Mh (\C_i)_{i \in X} &\Leftrightarrow
\P_\M(\alpha)((\D_i)_{i \in Y}) \oplus (\B_i)_{i \in X} \leq_\Mh (\C_i)_{i \in X}.
\end{align*}

Similarly, for $\forall$ we have:
\begin{align*}
&\forall_\alpha((\B_i \to_\M \C_i)_{i \in X}) \oplus (\B_i)_{i \in Y} \leq_\Mh (\D_i)_{i \in Y}\\
\Leftrightarrow &\forall_\alpha((\B_i \to_\M \C_{i})_{i \in X}) \leq_\Mh (\D_i)_{i \in Y}\\
\Leftrightarrow &(\B_i \to_\M \C_{i})_{i \in X} \leq_\Mh (\D_{\alpha(i)})_{i \in X}\\
\Leftrightarrow &(\C_i)_{i \in X} \leq_\Mh (\B_i \oplus \D_{\alpha(i)})_{i \in X}\\
\Leftrightarrow &(\C_i)_{i \in X} \leq_\Mh \P_\M(\alpha)((\D_i)_{i \in Y}) \oplus (\B_i)_{i \in X}.
\end{align*}

Finally, we need to verify that $[\hiB,\A]_{\P_\M}$ satisfies the Beck-Chevalley condition.
We have (writing $\alpha^*$ for the image of the morphism $\alpha$ under the functor for $[\hiB,\A]_{\P_\M}$):
\[((\exists x)_\Gamma(({s \times 1_X})^*((\C_i)_{i \in \Gamma' \times X})))_n = \{m \conc \langle n,m \rangle \conc f \mid f \in \C_{(s(n),m)} \oplus \B_n \oplus \B_m\}\]
and
\[(s^*((\exists x)_{\Gamma'}((\C_i)_{i \in \Gamma' \times X})))_n = \{\langle s(n),m \rangle \conc m \conc f \mid f \in \C_{(s(n),m)} \oplus \B_{n} \oplus \B_{s(n)}\}.\]
As in the proof of Theorem \ref{thm-meddoc} we have
\[s^*((\exists x)_{\Gamma'}((\C_i)_{i \in \Gamma' \times X})) \leq_\Mh ((\exists x)_\Gamma(({s \times 1_X})^*((\C_i)_{i \in \Gamma' \times X}))).\]
The opposite inequality is also almost the same as in the proof of Theorem \ref{thm-meddoc}, except that we now need to use that $\C_{(s(n),m)}$ uniformly computes an element of $\B_{(s(n),m)}$ and hence of $\B_m$.

For the other part of the Beck-Chevalley condition we have:
\begin{align*}
&(((\forall x)_\Gamma((\B_i)_{i \in \Gamma' \times X} \to ({s \times 1_X})^*((\C_{i})_{i \in \Gamma' \times X}))) \oplus (\B_i)_{i \in \Gamma})_n\\
=\;\;\;\; &\left\{\bigoplus_{m \in X} f_m \mid f_m \in \B_n \oplus \B_m \to \left(\B_n \oplus \B_m \oplus \C_{(s(n),m)}\right)\right\} \oplus \B_n\\
\equiv_\M\; &\left\{\bigoplus_{m \in X} f_m \mid f_m \in \B_m \to \C_{(s(n),m)}\right\} \oplus \B_n.\\
\intertext{Now, using the fact that $\B_{s(n)}$ uniformly reduces to $\B_n$:}
\equiv_\M\; &\left\{\bigoplus_{m \in X} f_m \mid f_m \in (\B_{s(n)} \oplus \B_m) \to \C_{(s(n),m)}\right\} \oplus \B_{s(n)} \oplus \B_n\\
=\;\;\;\; &(s^*((\forall x)_{\Gamma'}((\B_i)_{i \in \Gamma' \times X} \to (\C_{i})_{i \in \Gamma' \times X}) \oplus (\B_i)_{i \in \Gamma'}))_n,
\end{align*}
as desired.
\end{proof}

In Propositions \ref{prop-refute-1} and \ref{prop-refute-2} below we will show that we can refute the formulas from Propositions \ref{prop-cd} and \ref{prop-cd-2} in these more general intervals. Next, let us rephrase Lemma \ref{lem-eq} for our intervals.

\begin{lem}\label{lem-eq-ival}
Given any $X$ in the base category $\bC$ of $[\hiB,\A]_{\P_\M}$, let $0_X$ and $1_X$ be the bottom respectively top elements of the Brouwer algebra corresponding to $X$.
Then the equality $=_X$ in $[\hiB,\A]_{\P_\M}$ is given by:
\begin{align*}
(=_X)_{\langle n,m\rangle} = \begin{cases} 0_X & \text{if } n=m\\
1_X & \text{otherwise.}
\end{cases}
\end{align*}
\end{lem}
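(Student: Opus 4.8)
The plan is to derive Lemma~\ref{lem-eq-ival} from the characterisation of the equality predicate in $\P_\M$ already recorded in Lemma~\ref{lem-eq}, together with the explicit description of the right adjoints in the interval hyperdoctrine given in the proof of Theorem~\ref{thm-ival-hyperdoc}. Recall that in any first-order hyperdoctrine the equality predicate $=_X$ is by definition the right adjoint to $\Delta_X^*$ evaluated at the bottom element $0 \in \P(X)$, where $\Delta_X: X \to X \times X$ is the diagonal. So I would start by identifying, for the interval $[\hiB,\A]_{\P_\M}$, what this right adjoint is.

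First I would recall from the proof of Theorem~\ref{thm-ival-hyperdoc} that for a morphism $\alpha: X \to Y$ in the base category of the interval, the map $\alpha^* = \P_\M(\alpha) \oplus (\B_i)_{i \in X}$ has as right adjoint the same $\exists_\alpha$ as in Proposition~\ref{prop-adjoint} (this is exactly the first of the two adjunction computations in that proof, which shows $(\D_i)_{i\in Y} \leq_\Mh \exists_\alpha((\C_i)_{i\in X})$ iff $\alpha^*((\D_i)_{i \in Y}) \leq_\Mh (\C_i)_{i \in X}$). Applying this with $\alpha = \Delta_X$ and evaluating $\exists_{\Delta_X}$ at the bottom element $0_X$ of the Brouwer algebra attached to $X$, I would compute $(=_X)_{\langle n,m \rangle} = (\exists_{\Delta_X}(0_X))_{\langle n,m\rangle}$. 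Using the formula $(\exists_\alpha(\hA))_{\langle n,m\rangle} = \{k \conc f \mid f \in \A_k \wedge \Delta_X(k) = \langle n,m\rangle\}$ from the proof of Proposition~\ref{prop-adjoint}: since $\Delta_X(k) = \langle k,k\rangle$, the condition $\Delta_X(k) = \langle n,m\rangle$ is satisfiable iff $n = m$, in which case $k = n$ is forced. Hence when $n \neq m$ the set is empty, i.e.\ $=_X$ is $1_X$ in that coordinate; and when $n = m$ it is $\{n \conc f \mid f \in (0_X)_n\}$, which is $\Mh$-equivalent to $0_X$ (prepending the fixed computable tag $n$ and stripping it are both computable and uniform). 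This gives exactly the claimed case distinction.

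The one point requiring a little care — and the likeliest source of a subtlety — is bookkeeping around the identification of $X^2$ (equivalently $\omega^2$ or $\{1,\dots,m\}^2$) with $X$ (equivalently $\omega$ or $\{1,\dots,2m\}$, or more naturally $\{1,\dots,m^2\}$) via the fixed computable bijection $\langle a_1,a_2\rangle$, and the fact that in the interval the bottom element $0_X$ is not $(\emptyset)_i$ but rather $(\B_{a_1} \oplus \cdots \oplus \B_{a_n})_{a}$, while $1_X = (\A,\A,\dots)$. So I would phrase the conclusion purely in terms of $0_X$ and $1_X$, as the statement does, rather than in terms of $\omega^\omega$ and $\emptyset$ as in Lemma~\ref{lem-eq}; the content is that the equality predicate degenerates in the same combinatorial way as in the full hyperdoctrine, only the ambient top and bottom having changed. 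Everything else is the routine observation that all the Turing functionals involved (re-tagging by a computable bijection) are uniform, so the equivalences hold in $\Mh$ and not merely componentwise.

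\begin{proof}
By Definition~\ref{def-hyperdoctrine}\eqref{def-hyp-3}, $=_X$ is the value at the bottom element $0_X$ of the right adjoint to $\Delta_X^*$, where $\Delta_X: X \to X^2$ is the diagonal and $\Delta_X^*$ is its image under the functor defining $[\hiB,\A]_{\P_\M}$. By the proof of Theorem~\ref{thm-ival-hyperdoc}, the right adjoint of $\Delta_X^* = \P_\M(\Delta_X) \oplus (\B_i)_{i \in X}$ is $\exists_{\Delta_X}$, with $\exists_{\Delta_X}$ as in Proposition~\ref{prop-adjoint}. By the formula for the right adjoint given there,
\[
(=_X)_{\langle n,m\rangle} = (\exists_{\Delta_X}(0_X))_{\langle n,m\rangle} = \{k \conc f \mid f \in (0_X)_k \wedge \Delta_X(k) = \langle n,m\rangle\},
\]
where we identify $X^2$ with $X$ via $\langle a_1,a_2\rangle$. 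Since $\Delta_X(k)$ corresponds to $\langle k,k\rangle$, the defining condition is unsatisfiable when $n \neq m$, so in that case $(=_X)_{\langle n,m\rangle} = \emptyset \equiv_\M 1_X$ in the relevant component; and when $n = m$ it forces $k = n$, giving $(=_X)_{\langle n,n\rangle} = \{n \conc f \mid f \in (0_X)_n\}$, which is $\Mh$-equivalent to $0_X$ since prepending and stripping the fixed computable tag $n$ are uniformly computable. This is the claimed description.
\end{proof}
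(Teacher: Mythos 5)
Your proposal is correct and follows exactly the route the paper takes: its proof of Lemma \ref{lem-eq-ival} is precisely ``apply Definition \ref{def-hyperdoctrine}\eqref{def-hyp-3} to the explicit right adjoint $\exists_{\Delta_X}$ from the proof of Theorem \ref{thm-ival-hyperdoc}'', and you have simply written out the computation (empty fibre off the diagonal, tagged copy of the bottom on the diagonal) together with the appropriate bookkeeping for the interval's top and bottom. The only cosmetic caveat is that for $n \neq m$ the set $\emptyset$ is not literally $\equiv_\M \A$ but rather lies above the interval's top and is therefore identified with $1_X$ when the adjoint is computed inside the interval lattice --- the same implicit convention the paper itself uses.
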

\begin{proof}
From the formula given for the right adjoint in the proof of Theorem \ref{thm-ival-hyperdoc}, and the definition of $=_X$ in a first-order hyperdoctrine in Definition \ref{def-hyperdoctrine}.
\end{proof}

As a final remark, note that we cannot vary $\A$ (i.e.\ make intervals of the form $[\hiB,(\A_i)_{i \geq -1}]_{\P_\M}$): if we did, then to make $\alpha^*$ into a homomorphism we would need to meet with $\A_i$. While joining with $\B_i$ was not a problem, if we meet with $\A_i$ the implication will in general not be preserved.

\section{Heyting arithmetic in intervals of the hyperdoctrine of mass problems}\label{sec-ha}

In the previous section we introduced the general intervals $[\hiB,\A]_{\P_\M}$. However, it turns out that even these intervals cannot capture every theory in IQC, in the sense that there are deductively closed theories $T$ for which there is no structure in any general interval which has as theory exactly $T$. We will show this by looking at models of Heyting arithmetic. Our approach is based on the following classical result about computable classical models of Peano arithmetic.

\begin{thm}{\rm (Tennenbaum \cite{tennenbaum-1959})}\label{thm-tennenbaum}
There is no computable non-standard model of Peano arithmetic.
\end{thm}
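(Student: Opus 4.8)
\textbf{Proof proposal for Theorem~\ref{thm-tennenbaum} (Tennenbaum).}

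The plan is to derive a contradiction from the existence of a computable non-standard model $\gM \models \mathrm{PA}$ by using the non-standard elements to code a non-recursive set into the computable structure of $\gM$, thereby making that set computable. First I would fix a computable presentation: the universe is (a computable subset of) $\omega$, and the graphs of $+$, $\cdot$ and the order $<$ are computable. Since $\gM$ is non-standard, fix a non-standard element $c$, i.e.\ one with $\underline{n} < c$ for every standard numeral $\underline{n}$; such $c$ exists because $\gM$ is not the standard model and any element above all numerals works (and by overspill there is one, or simply any element not equal to the interpretation of a numeral is above all of them in a model of $\mathrm{PA}$, after replacing $c$ by a suitable successor if needed). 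The key number-theoretic tool is that $\mathrm{PA}$ proves the existence, for each $k$, of a number coding an arbitrary finite binary sequence of length $k$ via, say, the $\beta$-function or the $\mathrm{CRT}$-based coding; crucially $\mathrm{PA}$ proves the instances $\forall k\, \exists y\, \forall i<k\, (\text{``}i\text{th bit of }y\text{ is }b_i\text{''})$ uniformly, and inside $\gM$ these go through at the non-standard length $c$ as well.

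The main step is: let $X \subseteq \omega$ be a fixed non-computable c.e.\ set (e.g.\ the halting problem). Working inside $\gM$, pick an element $a$ which codes, in the $\gM$-sense, the characteristic function of $X$ restricted to the $\gM$-interval $[0,c)$; the existence of such an $a$ is forced by the coding lemma applied inside $\gM$ at the non-standard length $c$, using that $X$ is $\Sigma_1$ and $\gM$ satisfies the corresponding $\Sigma_1$ reflection/coding statements. Then for a \emph{standard} $n$, membership $n \in X$ is equivalent to $\gM \models$ ``the $n$th bit of $a$ is $1$'', and this latter condition is decidable using only the \emph{computable} operations of $\gM$: one computes the $\gM$-numeral $\underline{n}$ by iterating the computable successor, computes the relevant remainder/bit-extraction term applied to $a$ and $\underline{n}$ using the computable $+$ and $\cdot$, and checks the result against $\underline{0}$ and $\underline{1}$ using the computable order. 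Hence $X$ would be computable, contradicting the choice of $X$. The two things requiring care are (i) that the bit-extraction function used is $\mathrm{PA}$-provably total and given by a \emph{fixed} term or $\Delta_0$ formula, so that its evaluation on standard inputs really reduces to finitely many applications of the computable atomic operations, and (ii) that one correctly separates standard from non-standard behaviour — we only ever query bit positions given by standard numerals, so no appeal to $\gM$'s non-standard arithmetic beyond the single element $a$ is needed.

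The step I expect to be the main obstacle is arranging the coding so that the reduction is genuinely effective: one must ensure that from a standard $n \in \omega$ one can compute the $\gM$-element $\underline{n}$ (fine, by iterating computable successor) \emph{and} that the decoding of the $n$th bit of $a$ is a composition of the model's basic computable operations rather than something requiring a search through the infinite universe. The cleanest route is to use a coding whose bit-extraction is primitive recursive and expressed by a bounded formula, so that $\gM \models \text{``bit}_n(a)=1\text{''}$ unfolds into a finite computation in $(+,\cdot,<,0,S)$ on the inputs $a$ and $\underline{n}$; then no unbounded search over $\gM$ is ever performed, and the whole procedure is a genuine Turing reduction of $X$ to the (computable) atomic diagram of $\gM$. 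Once this effectivity point is pinned down, the contradiction with the non-computability of $X$ is immediate and the theorem follows.
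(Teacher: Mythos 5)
There is a genuine gap at the central step, namely the claim that you can ``pick an element $a$ which codes, in the $\gM$-sense, the characteristic function of $X$ restricted to $[0,c)$'' for a single non-computable c.e.\ set $X$. The coding lemma inside $\gM$ only gives you an element coding a set that is \emph{definable in $\gM$} (bounded by $c$), e.g.\ $\{e < c : \gM \models \exists s<c\,\phi'(e,s)\}$ where $\phi'$ is the $\Delta_0$ stage-predicate for $X$. The standard part of this set contains $X$ (each true instance is witnessed at a standard stage and persists by monotonicity), but it may strictly contain $X$: for a standard $e \notin X$, the $\Sigma_1$ formula $\exists s\,\phi'(e,s)$ can acquire a non-standard witness in $\gM$, so the $e$th bit of $a$ can be $1$ anyway. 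Your asserted equivalence ``$n \in X$ iff the $n$th bit of $a$ is $1$'' therefore fails in one direction, and what you actually prove computable is merely some superset of $X$, which is no contradiction. The problem is not repairable by choosing the coding more carefully: the sets coded in $\gM$ (its standard system) form a Scott set, and there are non-standard models of $\mathrm{PA}$ whose standard system consists entirely of low sets and hence omits the halting problem, so for those models no element codes your $X$ exactly.

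This is precisely why the classical argument, which the paper follows, replaces the single set $X$ by a pair $A,B$ of computably inseparable c.e.\ sets whose disjointness is provable in $\mathrm{PA}$. The element $m$ obtained at a non-standard length then codes a set $C$ with $A \subseteq C$ (by persistence of true $\Sigma_1$ facts) and $C \cap B = \emptyset$ (because provable disjointness forces $\gM \models \neg\phi'(e,n)$ whenever $\gM \models \psi'(e,n)$); one never needs $C$ to equal $A$, only to separate $A$ from $B$, and the computability of $C$ already contradicts inseparability. Your discussion of effectivity --- that bit extraction is a $\mathrm{PA}$-provably total, bounded operation evaluated by finitely many applications of the computable atomic operations of $\gM$ on $a$ and the numeral $\underline{n}$ --- is correct and matches the paper's use of the Euclidean algorithm; the flaw is solely in which set is being coded.
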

\begin{proof}(Sketch) Let $A,B$ be two c.e.\ sets which are computably inseparable and for which PA proves that they are disjoint (e.g.\ take $A = \{e \in \omega \mid \{e\}(e){\downarrow}=0\}$ and $B=\{e \in \omega \mid \{e\}(e){\downarrow}=1\}$). Let $\phi(e) = \exists s \phi'(e,s)$ define $A$ and let $\psi(e) = \exists s \psi'(e,s)$ define $B$, where $\phi,\psi$ are $\Delta^0_0$-formulas which are monotone in $s$.
Now consider the following formulas:
\begin{align*}
\alpha_1 &= \forall e,s \forall s' \geq s ((\phi'(e,s) \to \phi'(e,s')) \wedge (\psi'(e,s) \to \psi'(e,s')))\\
\alpha_2 &= \forall e,s (\neg (\phi'(e,s) \wedge \psi'(e,s)))\\
\alpha_3 &= \forall n,p \exists! a,b(b < p \wedge ap+b=n)\\
\alpha_4 &= \forall n \exists m \forall e < n (\phi'(e,n) \leftrightarrow \exists a < n(ap_e=m)),
\end{align*}
where $p_e$ denotes the $e$th prime.

These are all provable in PA. The first formula tells us that $\phi'$ and $\psi'$ are monotone in $s$. The second formula expresses that $A$ and $B$ are disjoint. The third formula says that the Euclidean algorithm holds. The last formula tells us that for every $n$, we can code the elements of $A[n] \cap [0,n)$ as a single number. We can prove this inductively, by letting $m$ be the product of those $p_e$ such that $e \in A[n] \cap [0,n)$.

Thus, every non-standard model of Peano arithmetic also satisfies these formulas.
Towards a contradiction, let $\gM$ be a computable non-standard model of PA. Let $n \in M$ be a non-standard element, i.e.\ $n > k$ for every standard $k$. Let $m \in M$ be such that 
\[\gM \models \forall e < n (\phi'(e,n) \leftrightarrow \exists a < n . ap_e=m).\]
If $e \in A$, then $\phi'(e,s)$ holds in the standard model for large enough standard $s$, and since $\gM$ is a model of Robinson's $Q$ and $\phi'$ is $\Delta^0_0$ we see that also $\gM \models \phi'(e,s)$ for large enough standard $s$. By monotonicity, we therefore have $\gM \models \phi'(e,n)$. Thus, $\gM \models \exists a < n . a p_e = m$.

Conversely, if $e \in B$, then $\gM \models \psi'(e,s)$ for large enough standard $s$, so by monotonicity we see that $\gM \models \psi'(e,n)$.
Therefore, $\gM \models \neg\phi'(e,n)$ by $\alpha_2$.
Thus, $\gM \models \neg(\exists a < n . ap_e=m)$.
So, the set $C = \{e \in \omega \mid \gM \models \exists a,b < n . ap_{S^e(0)} = m\}$ separates $A$ and $B$.

However, $C$ is also computable: because the Euclidean algorithm holds in $\gM$, we know that there exist unique $a,b$ with $b < p_{S^e(0)}$ such that $ap_{S^e(0)} + b = m$. Since $\gM$ is computable we can find those $a$ and $b$ computably. Now $e$ is in $C$ if and only if $b = 0$. This contradicts $A$ and $B$ being computably separable.
\end{proof}

When looking at models of arithmetic, we often use that fairly basic systems (like Robinson's $Q$) already represent the computable functions (a fact which we used in the proof of Tennenbaum's theorem above). In other words, this tells us that there is not much leeway to change the truth of $\Delta^0_1$-statements.
The next two lemmas show that in a language without any relations except equality (like arithmetic), as long as our formulas are $\Delta^0_1$, their truth value in the hyperdoctrine of mass problems is essentially classical; in other words, there is also no leeway to make their truth non-classical.

\begin{lem}\label{lem-q-0}
Let $\Sigma$ be a language without relations (except possibly equality).
Let $[\hiB,\A]_{\P_\M}$ be an interval and let $\gM$ be a structure for $\Sigma$ in $[\hiB,\A]_{\P_{\M}}$. Let $\phi(x_1,\dots,x_n)$ be a $\Delta^0_0$-formula and let $a_1,\dots,a_n \in M$. Then we have either $\llbracket \phi(x_1,\dots,x_n) \rrbracket_{\langle a_1,\dots,a_n\rangle} \equiv_\M \B_{-1} \oplus \B_{a_1} \oplus \dots \oplus B_{a_n}$ or $\llbracket \phi(x_1,\dots,x_n) \rrbracket_{\langle a_1,\dots,a_n\rangle} \equiv_\M \A$, with the first holding if and only if
$\phi(a_1,\dots,a_n)$ holds classically in the classical model induced by $\gM$ (i.e.\ the classical model with universe $M$ and functions as in $\gM$).

Furthermore, it is decidable which of the two cases holds, and the reductions between $\llbracket \phi(x_1,\dots,x_n) \rrbracket_{\langle a_1,\dots,a_n\rangle}$ and either $\B_{a_1} \oplus \dots \oplus B_{a_n}$ or $\A$ are uniform in $a_1,\dots,a_n$.
\end{lem}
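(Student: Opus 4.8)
The plan is to prove all four assertions (the dichotomy, its agreement with classical truth, decidability, and uniformity in $a_1,\dots,a_n$) simultaneously by induction on the $\Delta^0_0$-formula $\phi$. Write $0^{\vec a} := \B_{-1}\oplus\B_{a_1}\oplus\dots\oplus\B_{a_n}$; since $\hiB\geq_\Mh(\B_{-1},\B_{-1},\dots)$ this is $\equiv_\M$ the bottom $\B_{a_1}\oplus\dots\oplus\B_{a_n}$ of the $\langle a_1,\dots,a_n\rangle$-th component of the Brouwer algebra attached to $M^n$, while the top of that component is $\A$. Because all lattice operations and the interval implication are computed componentwise in $[\hiB,\A]_{\P_\M}$, the entire content is that, modulo $\equiv_\M$, the two-element set $\{0^{\vec a},\A\}$ is a classical Boolean subalgebra of that component algebra, with $0^{\vec a}$ in the role of ``true''; the induction just has to check that the interpretation of $\phi$ never escapes this two-element set and tracks classical truth.

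For an atomic formula $t_1=t_2$ (there being no relations besides equality), unwinding the interpretation gives $\llbracket t_1=t_2\rrbracket_{\langle\vec a\rangle}\equiv_\M (=_M)_{\langle b_1,b_2\rangle}\oplus\B_{a_1}\oplus\dots\oplus\B_{a_n}$, where $b_j=\llbracket t_j\rrbracket(\vec a)$ is the value of $t_j$ in the classical model induced by $\gM$ and the extra join comes from the fact that in an interval every morphism is post-composed with $\oplus(\B_i)$. By Lemma~\ref{lem-eq-ival} the first factor is the component-bottom $\B_{b_1}\oplus\B_{b_2}$ if $b_1=b_2$ and $\A$ otherwise; since the tuple of term maps is a morphism of $\bC$, $\B_{a_1}\oplus\dots\oplus\B_{a_n}$ uniformly computes $\B_{b_1}\oplus\B_{b_2}$, so the whole expression is $\equiv_\M 0^{\vec a}$ in the first case and $\equiv_\M\A$ in the second (as each $\B_{a_i}\leq_\M\A$). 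This is decidable uniformly in $\vec a$ because the $b_j$ are computable from $\vec a$ and equality on $M$ is decidable, and $b_1=b_2$ holds iff $t_1=t_2$ holds classically at $\vec a$.

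For the propositional connectives $\top,\bot,\wedge,\vee,\to$ it suffices, by the componentwise remark, to observe that in a Brouwer algebra the subset $\{0,1\}$ is closed under $\oplus,\otimes,\to$ and, reading ``$x=0$'' as ``$x$ holds'', these operations obey the classical truth tables: $0\oplus 0=0$ and $x\oplus 1\equiv_\M 1$; $0\otimes x\equiv_\M 0$ and $1\otimes 1=1$; and $(x\to_\M y)\oplus 0^{\vec a}$ is $\equiv_\M 0^{\vec a}$ when $x\geq_\M y$ and $\equiv_\M\A$ otherwise, using $\A\to_\M\A\equiv_\M 0$, $\B_{a_i}\leq_\M\A$, $x\to_\M\A\geq_\M\A$, and that the result lies in the interval. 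Since these operations are well defined on $\equiv_\M$-classes, the inductive hypothesis on the immediate subformulas (in the same context $\vec x$, at the same index $\langle\vec a\rangle$) transfers, matches the classical connective, and inherits decidability and uniformity.

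Finally, the bounded quantifiers $\exists y(y<t\wedge\psi)$ and $\forall y(y<t\to\psi)$ are the crux. Using the explicit adjoint formulas from Theorem~\ref{thm-ival-hyperdoc} together with Remark~\ref{rem-proj}, $\llbracket\exists y(\ldots)\rrbracket_{\langle\vec a\rangle}$ is, up to $\equiv_\M$, the ``big meet'' $\bigotimes_{m\in M}\llbracket y<t\wedge\psi\rrbracket_{\langle\vec a,m\rangle}$, and $\llbracket\forall y(\ldots)\rrbracket_{\langle\vec a\rangle}$ is the ``big join'' of the problems $\B_{\langle\vec a,m\rangle}\to_\M\llbracket y<t\to\psi\rrbracket_{\langle\vec a,m\rangle}$ together with $0^{\vec a}$. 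By the inductive hypothesis applied at each index $\langle\vec a,m\rangle$ (uniformly in $m$) each inner problem is $\equiv_\M 0^{\vec a}\oplus\B_m$ or $\equiv_\M\A$ according to the classical truth of the relevant instance, and a short computation shows the big meet collapses to $0^{\vec a}$ exactly when some inner problem is ``true'' (and to $\A$ otherwise), dually for the big join --- which is precisely the classical truth condition for the bounded quantifier. I expect this step to be the main obstacle: one must (i) carry the $\oplus\B$-bookkeeping honestly (the component-bottom genuinely changes from $0^{\vec a}$ at context $\vec x$ to $0^{\vec a}\oplus\B_m$ at context $(\vec x,y)$); (ii) verify that the big meet and big join land on $0^{\vec a}$ or $\A$ rather than on an intermediate degree, using $\B_{a_i}\leq_\M\A$ uniformly, $x\to_\M\A\geq_\M\A$, $x\to_\M x\equiv_\M 0$, the interval bounds, and uniformity in $m$ of all reductions supplied by the inductive hypothesis; and (iii) for decidability, observe that ``$m<\llbracket t\rrbracket(\vec a)$'' is decidable uniformly in $m$ by the inductive hypothesis, so the bounded quantifier reduces to a decidable search over the effectively bounded initial segment $\{m\in M:m<\llbracket t\rrbracket(\vec a)\}$ and a (decidable) check of $\psi$ on it.
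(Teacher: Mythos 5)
Your induction and its treatment of the cases the paper actually writes out coincide with the paper's own proof: the atomic case goes through Lemma~\ref{lem-eq-ival} plus the fact that the tuple of term maps is a morphism of $\bC$ (so $\B_{a_1}\oplus\dots\oplus\B_{a_n}$ uniformly computes $\B_{b_1}\oplus\B_{b_2}$, and equality of the computable values $b_1,b_2$ is decidable), and the propositional connectives are handled by noting that the pair $\{\text{component bottom},\A\}$ is closed under $\oplus,\otimes$ and the interval implication and obeys the classical truth tables. The paper writes out only the atomic, $\wedge$ and $\to$ cases and declares the rest similar, so on this common ground your proposal is the same argument.

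Where you go beyond the paper is the bounded quantifiers, which the paper's proof does not address at all; you are right to single them out as the crux, but your obligation (ii) is a genuine gap rather than bookkeeping. For $\exists y(y<t\wedge\psi)$ the interpretation at $\langle\vec{a}\rangle$ is $\bigcup_m m\conc\llbracket y<t\wedge\psi\rrbracket_{\langle\vec{a},m\rangle}$, and to show it collapses to the component bottom when the formula is classically true you must compute, from an element of $\B_{-1}\oplus\B_{a_1}\oplus\dots\oplus\B_{a_n}$ alone, an element of $\llbracket y<t\wedge\psi\rrbracket_{\langle\vec{a},m_0\rangle}\equiv_\M\B_{-1}\oplus\B_{a_1}\oplus\dots\oplus\B_{a_n}\oplus\B_{m_0}$ for some classical witness $m_0$. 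This needs $\B_{a_1}\oplus\dots\oplus\B_{a_n}\oplus\B_{-1}\geq_\M\B_{m_0}$, which Definition~\ref{def-interval} only guarantees when $m_0$ is the value of a term in $\vec{a}$ (terms are morphisms of $\bC$); an arbitrary witness below a possibly nonstandard bound $t(\vec{a})$ need not be term-definable, so the big meet need not collapse to the bottom. The dual problem occurs for a bounded universal that is classically false: to extract an element of $\A$ from the big join one must feed $f_{m_0}\in\B_{(\vec{a},m_0)}\to_\M\A$ an element of $\B_{m_0}$, which again is not available. So either read ``$\Delta^0_0$'' as quantifier-free (which is all the paper's own proof establishes) or add an argument that in the intended application the witnesses of the bounded quantifiers can be taken to be values of terms in the free variables, in which case your sketch goes through.
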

\begin{proof}
We prove this by induction on the structure of $\phi$.

\begin{itemize}
\item $\phi$ is of the form $t(x_1,\dots,x_n) = s(x_1,\dots,x_n)$: by Lemma \ref{lem-eq-ival} we know that $\llbracket t(x_1,\dots,x_n) = s(x_1,\dots,x_n) \rrbracket_{\langle a_1,\dots,a_n\rangle}$ is either $\B_{-1} \oplus \B_{a_1} \oplus \dots \oplus B_{a_n}$ or $\A$, with the first holding if and only if $t(a_1,\dots,a_n) = s(a_1,\dots,a_n)$ holds classically. Since all functions are computable and equality is true equality, it is decidable which of the two cases holds.
\item $\phi$ is of the form $\psi(x_1,\dots,x_n) \wedge \chi(x_1,\dots,x_n)$: there are three cases:
\begin{itemize}
\item If both $\llbracket \psi(x_1,\dots,x_n) \rrbracket_{\langle a_1,\dots,a_n\rangle}$ and $\llbracket \chi(x_1,\dots,x_n) \rrbracket_{\langle a_1,\dots,a_n\rangle}$ are equivalent to $\B_{-1} \oplus \B_{a_1} \oplus \dots \oplus B_{a_n}$,
then $\llbracket \phi(x_1,\dots,x_n)\rrbracket_{\langle a_1,\dots,a_n\rangle} \equiv_\M \B_{-1} \oplus \B_{a_1} \oplus \dots \oplus B_{a_n}$,
\item If $\llbracket \psi(x_1,\dots,x_n) \rrbracket_{\langle a_1,\dots,a_n\rangle} \equiv_\M \A$, then $\llbracket \phi(x_1,\dots,x_n)\rrbracket_{\langle a_1,\dots,a_n\rangle} \equiv_\M \A$ by sending $f \oplus g$ to $f$,
\item If $\llbracket \chi(x_1,\dots,x_n) \rrbracket_{\langle a_1,\dots,a_n\rangle} \equiv_\M \A$, then $\llbracket \phi(x_1,\dots,x_n)\rrbracket_{\langle a_1,\dots,a_n\rangle} \equiv_\M \A$ by sending $f \oplus g$ to $g$.
\end{itemize}
This case distinction is decidable because the induction hypothesis tells us that the truth of $\psi$ and $\chi$ is decidable.
\item $\phi$ is of the form $\psi(x_1,\dots,x_n) \to \chi(x_1,\dots,x_n)$: this follows directly from the fact that, in any Brouwer algebra with top element $1$ and bottom element $0$, we have $0 \to 1 = 1$ and $0 \to 0 = 1 \to 1 = 1 \to 0 = 0$.
The case distinction is again decidable by the induction hypothesis.
\end{itemize}
The other cases are similar.
\end{proof}

\begin{lem}
Let $\Sigma, \A, \hiB$ and $\gM$ be as in Lemma \ref{lem-q-0}. Let $T$ be some theory which is satisfied by $\gM$, i.e.\ $\llbracket \psi \rrbracket_\gM = \B_{-1}$ for every $\psi \in T$. Let $\phi(x_1,\dots,x_n)$ be a formula which is $\Delta^0_1$ over $T$ and let $a_1,\dots,a_n \in M$. Then either $\llbracket \phi(x_1,\dots,x_n) \rrbracket_{\langle a_1,\dots,a_n\rangle} \equiv_\M \B_{-1} \oplus \B_{a_1} \oplus \dots \B_{a_n}$ or $\llbracket \phi(x_1,\dots,x_n) \rrbracket_{\langle a_1,\dots,a_n\rangle} \equiv_\M \A$, with the first holding if and only if
$\phi(a_1,\dots,a_n)$ holds classically in $\gM$.

Furthermore, it is decidable which of the two cases holds, and the reductions are uniform in $a_1,\dots,a_n$.
\end{lem}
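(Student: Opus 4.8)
The plan is to reduce everything to Lemma~\ref{lem-q-0} by replacing $\phi$ with its $\Sigma^0_1$- and $\Pi^0_1$-presentations. Since $\phi$ is $\Delta^0_1$ over $T$, fix $\Delta^0_0$-formulas $\psi(\vec{x},y)$ and $\chi(\vec{x},y)$ with $T \vdash_{\mathrm{IQC}} \forall \vec{x}(\phi(\vec{x}) \leftrightarrow \exists y\,\psi(\vec{x},y))$ and $T \vdash_{\mathrm{IQC}} \forall \vec{x}(\phi(\vec{x}) \leftrightarrow \forall y\,\chi(\vec{x},y))$. Because $\gM$ satisfies $T$, Proposition~\ref{prop-sound} shows that $\gM$ satisfies both equivalences, and unwinding the adjunction defining $\forall$ (so that a satisfied universally quantified formula has all of its instances satisfied) gives, for every $a_1,\dots,a_n \in M$,
\[\llbracket \phi(\vec{x}) \rrbracket_{\langle \vec{a}\rangle} \equiv_\M \llbracket \exists y\,\psi(\vec{x},y) \rrbracket_{\langle \vec{a}\rangle} \equiv_\M \llbracket \forall y\,\chi(\vec{x},y) \rrbracket_{\langle \vec{a}\rangle}.\]

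Next I would evaluate the two right-hand sides using the interval analogue of Remark~\ref{rem-proj} (read off from the adjoint formulas in the proof of Theorem~\ref{thm-ival-hyperdoc}) together with Lemma~\ref{lem-q-0} applied to the $\Delta^0_0$-matrices. Writing $\gM^{\mathrm{cl}}$ for the classical model induced by $\gM$, Lemma~\ref{lem-q-0} gives that $\llbracket \psi(\vec{x},y)\rrbracket_{\langle \vec{a},b\rangle}$ is $\equiv_\M \B_{-1}\oplus\B_{a_1}\oplus\dots\oplus\B_{a_n}\oplus\B_b$ if $\gM^{\mathrm{cl}}\models\psi(\vec{a},b)$ and is $\equiv_\M \A$ otherwise, uniformly and decidably in $\vec{a},b$, and similarly for $\chi$. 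Consequently: if $\gM^{\mathrm{cl}}\models\chi(\vec{a},b)$ for every $b\in M$ then each ``column'' $\B_{\langle\vec{a},b\rangle}\to_\M\llbracket\chi\rrbracket_{\langle\vec{a},b\rangle}$ collapses uniformly to $0_\M$ and $\llbracket \forall y\,\chi\rrbracket_{\langle\vec{a}\rangle}\equiv_\M \B_{-1}\oplus\B_{a_1}\oplus\dots\oplus\B_{a_n}$, whereas if $\gM^{\mathrm{cl}}\not\models\psi(\vec{a},b)$ for every $b$ then every component is $\equiv_\M\A$ uniformly and $\llbracket \exists y\,\psi\rrbracket_{\langle\vec{a}\rangle}\equiv_\M\A$. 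Since the two interpretations are equal, and $\B_{-1}\oplus\B_{a_1}\oplus\dots\oplus\B_{a_n}\not\equiv_\M\A$, the two dichotomies are forced to coincide: $\llbracket \phi(\vec{x})\rrbracket_{\langle\vec{a}\rangle}\equiv_\M\B_{-1}\oplus\B_{a_1}\oplus\dots\oplus\B_{a_n}$ exactly when $\gM^{\mathrm{cl}}\models\psi(\vec{a},b)$ for some $b$, equivalently when $\gM^{\mathrm{cl}}\models\chi(\vec{a},b)$ for all $b$, and $\equiv_\M\A$ otherwise. Playing the two computations against one another in this way is what supplies the ``$\le$''-directions that neither evaluation yields on its own.

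For decidability, note that by the atomic case of Lemma~\ref{lem-q-0} the atomic diagram of $\gM^{\mathrm{cl}}$ is computable, so ``$\exists b\in M\,\psi(\vec{a},b)$'' is c.e.\ in $\vec{a}$ and ``$\forall b\in M\,\chi(\vec{a},b)$'' is co-c.e., and by the previous step these coincide, hence the common condition is decidable uniformly in $\vec{a}$. Finally, to identify this condition with ``$\gM^{\mathrm{cl}}\models\phi(\vec{a})$'' one uses that $\gM^{\mathrm{cl}}$ is a classical model of $T$, whence classical soundness gives $\gM^{\mathrm{cl}}\models\forall\vec{x}(\phi\leftrightarrow\exists y\,\psi)$; that $\gM^{\mathrm{cl}}\models T$ follows in turn from the fact that a relation-free sentence satisfied by $\gM$ in $[\hiB,\A]_{\P_\M}$ holds classically in $\gM^{\mathrm{cl}}$, which is the induction of Lemma~\ref{lem-q-0} carried through the remaining connectives and quantifiers (the atomic, $\wedge$ and $\to$ clauses verbatim, the $\exists$ and $\forall$ clauses again via the adjunction and the formulas of Remark~\ref{rem-proj}).

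The step I expect to be the main obstacle is this last transfer from satisfaction in $[\hiB,\A]_{\P_\M}$ to classical satisfaction in $\gM^{\mathrm{cl}}$: the $\vee$- and $\exists$-clauses of that induction are delicate, because a disjunction (or an existential) can a priori be satisfied without any single disjunct (witness) being so unless the bottom mass problem $\B_{-1}$ is sufficiently concrete, and one must ensure that possibly non-standard witnesses $b_0$ carrying a ``hard'' $\B_{b_0}$ do not break the required uniform reductions --- which is precisely why, in the core computation above, it is convenient to extract the ``$\le$''-direction from the $\forall$-side rather than from the $\exists$-side.
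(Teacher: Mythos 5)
Your argument is essentially the paper's own proof: the same $\Pi^0_1$/$\Sigma^0_1$ double presentation of $\phi$, soundness (Proposition \ref{prop-sound}) to equate the two interpretations, Lemma \ref{lem-q-0} applied to the $\Delta^0_0$ matrices, and the play-off between the two evaluations to force the dichotomy and make the correct case computably findable. The final transfer to classical satisfaction of $\phi$ itself, which you single out as the delicate step, is also left implicit in the paper; in the intended application $T$ consists of $\Pi^0_1$ sentences, for which Lemma \ref{lem-q-1} supplies exactly the needed fact that satisfaction in $\gM$ implies classical truth in the induced model.
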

\begin{proof}
Let
\[\phi \Leftrightarrow \forall y_1,\dots,y_m \psi(x_1,\dots,x_n,y_1,\dots,y_m) \Leftrightarrow \exists y_1,\dots,y_m \chi(x_1,\dots,x_n,y_1,\dots,y_m),\]
where $\psi$ and $\chi$ are $\Delta^0_0$-formulas. Then by soundness (see Proposition \ref{prop-sound}) we know that
\begin{equation}\label{eqn-237}
\llbracket \forall y_1,\dots,y_m \psi(x_1,\dots,x_n,y_1,\dots,y_m) \rrbracket \equiv_\Mh \llbracket \exists y_1,\dots,y_m \chi(x_1,\dots,x_n,y_1,\dots,y_m)\rrbracket.
\end{equation}
Let $a_1,\dots,a_n \in M$. We claim: there are some $b_1,\dots,b_m$ such that either
\begin{equation}\label{eqn-238}
\llbracket \psi(x_1,\dots,x_n,y_1,\dots,y_m)\rrbracket_{\langle a_1,\dots,a_n,b_1,\dots,b_m\rangle} \equiv_\M \A
\end{equation}
or
\begin{equation}\label{eqn-239}
\llbracket\chi(x_1,\dots,x_n,y_1,\dots,y_m)\rrbracket_{\langle a_1,\dots,a_n,b_1,\dots,b_m\rangle} \equiv_\M \B_{a_1} \oplus \dots \oplus \B_{a_n} \oplus \B_{b_1} \oplus \dots \oplus \B_{b_m}.
\end{equation}
Indeed, otherwise we see from Lemma \ref{lem-q-0} and some easy calculations that
\[\llbracket \forall y_1,\dots,y_m \psi(x_1,\dots,x_n,y_1,\dots,y_m) \rrbracket_{\langle a_1,\dots,a_n\rangle} \equiv_\M \B_{-1} \oplus \B_{a_1} \oplus \dots \oplus \B_{a_n}\]
and
\[\llbracket \exists y_1,\dots,y_m \chi(x_1,\dots,x_n,y_1,\dots,y_m) \rrbracket_{\langle a_1,\dots,a_n\rangle} \equiv_\M \A,\]
which contradicts \eqref{eqn-237}.

Thus, again by Lemma \ref{lem-q-0}, we can find $b_1,\dots,b_m$ computably such that either \eqref{eqn-238} or \eqref{eqn-239} holds. First, if \eqref{eqn-238} holds, then it can be directly verified that
\[\llbracket \forall y_1,\dots,y_m \psi(x_1,\dots,x_n,y_1,\dots,y_m) \rrbracket_{\langle a_1,\dots,a_n\rangle} \equiv_\M \A,\]
while if \eqref{eqn-239} holds, then it can be directly verified that
\[\llbracket \exists y_1,\dots,y_m \chi(x_1,\dots,x_n,y_1,\dots,y_m) \rrbracket_{\langle a_1,\dots,a_n\rangle} \equiv_\M \B_{-1} \oplus \B_{a_1} \oplus \dots \oplus \B_{a_n},\]
with all the reductions uniform in $a_1,\dots,a_n$.
\end{proof}

Next, we slightly extend this to $\Pi^0_1$-formulas and $\Sigma^0_1$-formulas, although at the cost of dropping the uniformity.
\begin{lem}\label{lem-q-1}
Let $\Sigma, \A, \hiB$ and $\gM$ be as in Lemma \ref{lem-q-0}. Let $\phi(x_1,\dots,x_n)$ be a $\Pi^0_1 $-formula or a $\Sigma^0_1$-formula and let $a_1,\dots,a_n \in M$. Then we have either $\llbracket \phi(x_1,\dots,x_n) \rrbracket_{\langle a_1,\dots,a_n\rangle} \equiv_\M \B_{-1} \oplus \B_{a_1} \oplus \dots \B_{a_n}$ or $\llbracket \phi(x_1,\dots,x_n) \rrbracket_{\langle a_1,\dots,a_n\rangle} \equiv_\M \A$, with the first holding if and only if
$\phi(a_1,\dots,a_n)$ holds classically in $\gM$.\footnote{However, unlike the previous two lemmas, the reductions need not be uniform in $a_1,\dots,a_n$.}
\end{lem}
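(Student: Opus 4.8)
The plan is to reduce everything to Lemma~\ref{lem-q-0}. Write $\phi(\vec x)$, up to logical equivalence, as $\forall \vec y\,\psi(\vec x,\vec y)$ when $\phi$ is $\Pi^0_1$ and as $\exists \vec y\,\psi(\vec x,\vec y)$ when $\phi$ is $\Sigma^0_1$, where $\vec y = (y_1,\dots,y_m)$ and $\psi$ is a $\Delta^0_0$-formula (if the quantifier block is empty this is just Lemma~\ref{lem-q-0}). Fix $a_1,\dots,a_n \in M$. By Lemma~\ref{lem-q-0}, for every $\vec b = (b_1,\dots,b_m) \in M^m$ the problem $\llbracket \psi(\vec x,\vec y) \rrbracket_{\langle \vec a,\vec b\rangle}$ is, uniformly in $\vec b$, either $\equiv_\M \B_{-1} \oplus \B_{a_1} \oplus \dots \oplus \B_{a_n} \oplus \B_{b_1} \oplus \dots \oplus \B_{b_m}$ or $\equiv_\M \A$, the first holding exactly when $\psi(\vec a,\vec b)$ holds classically. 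I would then read off $\llbracket \phi(\vec x)\rrbracket_{\langle\vec a\rangle}$ from the explicit formulas for the left and right adjoints given in the proof of Theorem~\ref{thm-ival-hyperdoc}, combined with Remark~\ref{rem-proj} applied to the block projection $M^{n+m} \to M^n$. The crucial difference from the previous two lemmas is that here we can no longer pass between the $\forall$- and $\exists$-forms of $\phi$, which is exactly why the reductions we obtain will not in general be uniform in $\vec a$.

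For $\phi = \forall \vec y\,\psi$, the left-adjoint formula yields
\[
\llbracket \forall \vec y\,\psi \rrbracket_{\langle\vec a\rangle} \equiv_\M \left\{ \bigoplus_{\vec b \in M^m} f_{\vec b} \ \Big|\ f_{\vec b} \in \left(\B_{-1} \oplus \B_{a_1} \oplus \dots \oplus \B_{b_m}\right) \to_\M \llbracket \psi\rrbracket_{\langle\vec a,\vec b\rangle} \right\} \oplus \B_{-1} \oplus \B_{a_1} \oplus \dots \oplus \B_{a_n}.
\]
If $\psi(\vec a,\vec b)$ holds classically for every $\vec b$, then each implication $(\B_{-1} \oplus \dots \oplus \B_{b_m}) \to_\M \llbracket\psi\rrbracket_{\langle\vec a,\vec b\rangle}$ is $\equiv_\M 0_\M$ uniformly in $\vec b$, so the whole right-hand side collapses to $\B_{-1} \oplus \B_{a_1} \oplus \dots \oplus \B_{a_n}$; and $\phi(\vec a)$ holds classically, as wanted. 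If instead $\psi(\vec a,\vec b_0)$ fails for some $\vec b_0$, then $\phi(\vec a)$ fails classically and $\llbracket\psi\rrbracket_{\langle\vec a,\vec b_0\rangle} \equiv_\M \A$; since $\A$ is the top of the interval we have $\llbracket \forall\vec y\,\psi\rrbracket_{\langle\vec a\rangle} \leq_\M \A$ automatically, and for the reverse reduction I would take a member of the displayed set, extract its $\vec b_0$-th component (a member of $(\B_{-1} \oplus \dots \oplus \B_{b_{0,m}}) \to_\M \llbracket\psi\rrbracket_{\langle\vec a,\vec b_0\rangle}$), apply it to a \emph{fixed} member of $\B_{-1} \oplus \B_{a_1} \oplus \dots \oplus \B_{b_{0,m}}$, and compose with a reduction witnessing $\A \leq_\M \llbracket\psi\rrbracket_{\langle\vec a,\vec b_0\rangle}$. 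This gives $\llbracket\forall\vec y\,\psi\rrbracket_{\langle\vec a\rangle} \equiv_\M \A$.

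The case $\phi = \exists \vec y\,\psi$ is dual. The right-adjoint formula together with Remark~\ref{rem-proj} gives $\llbracket \exists\vec y\,\psi\rrbracket_{\langle\vec a\rangle} \equiv_\M \{\, \vec b \conc f \mid \vec b \in M^m,\ f \in \llbracket\psi\rrbracket_{\langle\vec a,\vec b\rangle}\,\}$. If $\phi(\vec a)$ holds classically, fix $\vec b_0$ with $\psi(\vec a,\vec b_0)$ true, so $\llbracket\psi\rrbracket_{\langle\vec a,\vec b_0\rangle} \equiv_\M \B_{-1} \oplus \B_{a_1} \oplus \dots \oplus \B_{b_{0,m}}$; then prepending $\vec b_0$ and hard-coding a member of $\B_{b_{0,1}} \oplus \dots \oplus \B_{b_{0,m}}$ yields a reduction showing $\llbracket\exists\vec y\,\psi\rrbracket_{\langle\vec a\rangle} \equiv_\M \B_{-1} \oplus \B_{a_1} \oplus \dots \oplus \B_{a_n}$. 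If $\phi(\vec a)$ fails, every $\llbracket\psi\rrbracket_{\langle\vec a,\vec b\rangle} \equiv_\M \A$ uniformly in $\vec b$, and one checks directly that $\llbracket\exists\vec y\,\psi\rrbracket_{\langle\vec a\rangle} \equiv_\M \A$ (this subcase is actually uniform, but we do not need that for the statement).

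The main obstacle is the \emph{witness step} in the two non-uniform subcases, where we must build a member of $\A$ out of the interpretation (or a member of the interpretation out of something simpler) using a classically chosen witness or counterexample $\vec b_0$ \emph{together with a fixed solution} of the finite join $\B_{b_{0,1}} \oplus \dots \oplus \B_{b_{0,m}}$. What makes this step legitimate is that the standing hypothesis $\B_i \not\geq_\M \A$ of Definition~\ref{def-interval} excludes $\B_i = \emptyset$ (the empty mass problem lies above $\A$ in the Medvedev ordering), so each such join is non-empty and a solution to hard-code genuinely exists; and what makes the resulting reduction merely non-uniform rather than impossible is that $\vec b_0$ comes from the classical truth value of $\phi$ at $\vec a$, which is not computable in $\vec a$. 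Beyond this, I expect the write-up to consist of routine checks that the coordinate-projection and coding maps involved are computable and land in the advertised sets, all of which follow immediately from Remark~\ref{rem-proj} and the adjoint formulas of Theorem~\ref{thm-ival-hyperdoc}.
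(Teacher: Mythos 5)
Your proposal is correct and follows essentially the same route as the paper: decompose $\phi$ into a $\Delta^0_0$ matrix, apply Lemma \ref{lem-q-0} uniformly to the instances, read off the interpretation from the explicit adjoint formulas, and split on the classical truth value, with the non-uniform witness/counterexample step in the failing case being exactly what the paper leaves as ``directly checked.'' Your additional observation that $\B_i \not\geq_\M \A$ guarantees the non-emptiness needed to hard-code a solution is a correct and worthwhile filling-in of that detail.
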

\begin{proof}
Let $\phi(x_1,\dots,x_n) = \forall y_1,\dots,y_m \psi(x_1,\dots,x_n,y_1,\dots,y_n)$ with $\psi$ a $\Delta^0_0$-formula. First, let us assume $\phi(a_1,\dots,a_n)$ holds classically. Thus, for all $b_1,\dots,b_m \in M$ we know that $\psi(a_1,\dots,a_n,b_1,\dots,b_m)$ holds classically. By Lemma \ref{lem-q-0} we then know that $\psi(a_1,\dots,a_n,b_1,\dots,b_m)$ gets interpreted as $\B_{a_1} \oplus \dots \oplus \B_{a_n} \oplus \B_{b_1} \oplus \dots \oplus \B_{b_m}$ (by a reduction uniform in $b_1,\dots,b_m$). Now note that
\begin{align*}
&\;\;\;\;\llbracket \phi \rrbracket_{\langle a_1,\dots,a_n\rangle}\\
\equiv_\M\;& \bigoplus_{\langle b_1,\dots,b_m \rangle \in \omega} \Big((\B_{a_1} \oplus \dots \oplus \B_{a_n} \oplus \B_{b_1} \oplus \dots \oplus \B_{b_m})\\
&\;\;\to_\M \llbracket \psi(x_1,\dots,x_n,y_1,\dots,y_m)\rrbracket_{\langle a_1,\dots,a_n,b_1,\dots,b_m\rangle}\Big) \oplus (\B_{-1} \oplus \B_{a_1} \oplus \dots \oplus \B_{a_n})\\
\equiv_\M\;& \B_{-1} \oplus \B_{a_1} \oplus \dots \oplus \B_{a_n}.
\end{align*}

Now, let us assume $\phi(a_1,\dots,a_n)$ does not hold classically. Let $b_1,\dots,b_m \in M$ be such that $\psi(a_1,\dots,a_n,b_1,\dots,b_m)$ does not hold classically. By Lemma \ref{lem-q-0} we know that $\psi(a_1,\dots,a_n,b_1\dots,b_m)$ gets interpreted as $\A$. Then it is directly checked that in fact
\[\llbracket \phi(x_1,\dots,x_n)\rrbracket_{\langle a_1,\dots,a_n\rangle} \geq_\M \A,\]
as desired.

The proof for $\Sigma^0_1$-formulas $\phi$ is similar.
\end{proof}

Now, we will prove an analogue of Theorem \ref{thm-tennenbaum} for the hyperdoctrine of mass problems.

\begin{thm}\label{thm-not-iqc}
Let $\Sigma$ be the language of arithmetic consisting of a function symbol for every primitive recursive function, and equality.
There is a finite set of formulas $T \supset Q$ derivable in Heyting arithmetic such that for every interval $[\hiB,\A]_{\P_\M}$ and every classically true $\Pi^0_1$-sentence or $\Sigma^0_1$-sentence $\chi$ we have that every structure $\gM$ in $[\hiB,\A]_{\P_{\M}}$ satisfies $\bigwedge T \to \chi$. In particular this holds for $\chi = \mathrm{Con}(\mathrm{PA})$ and so for this language of arithmetic we have $\mathrm{Th}\left([\hiB,\A]_{\P_{\M}}\right) \not= \mathrm{IQC}$.
\end{thm}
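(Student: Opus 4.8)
The plan is to adapt the proof of Tennenbaum's theorem (Theorem~\ref{thm-tennenbaum}), using Lemmas~\ref{lem-q-0} and~\ref{lem-q-1} to transport the computability-theoretic obstruction from classical models of arithmetic into the interval $[\hiB,\A]_{\P_\M}$.

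First I would fix the data for the Tennenbaum argument: computably inseparable c.e.\ sets $A,B$ together with $\Delta^0_0$-formulas $\phi'(e,s),\psi'(e,s)$, monotone in $s$, defining them by step-counting (say $\phi'(e,s)$ expresses ``$\{e\}(e)$ halts within $s$ steps with output $0$'' and $\psi'$ the same with output~$1$), so that their monotonicity and disjointness are provable already in $Q$. Let $T$ be the finite set of sentences consisting of the axioms of $Q$ (taken in $\Pi^0_1$ form, using the predecessor function of the present language), the axioms $\alpha_1,\alpha_2,\alpha_3$ from the proof of Theorem~\ref{thm-tennenbaum}, a version of $\alpha_4$ in which the code $m$ is uniquely determined (e.g.\ demanding additionally that $m$ be square-free with prime divisors among $p_0,\dots,p_{n-1}$), and the defining equations of the finitely many primitive recursive function symbols occurring in $\alpha_1,\dots,\alpha_4$ and in a fixed $\Pi^0_1$-sentence expressing $\mathrm{Con}(\mathrm{PA})$. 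Every member of $T$ is $\Pi^0_1$ (or $\Delta^0_0$) and derivable in Heyting arithmetic, so $T\supseteq Q$ and $\mathrm{HA}\vdash\bigwedge T$.

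Now fix an interval $[\hiB,\A]_{\P_\M}$, a structure $\gM$ in it, and a classically true $\Pi^0_1$- or $\Sigma^0_1$-sentence $\chi$ — for the intended application $\chi=\mathrm{Con}(\mathrm{PA})$, whose function symbols $T$ has been set up to include — and write $N$ for the classical first-order model induced by $\gM$. Unwinding the Brouwer-algebra implication, $\gM$ satisfies $\bigwedge T\to\chi$ if and only if $\llbracket\bigwedge T\rrbracket_\gM\geq\llbracket\chi\rrbracket_\gM$. By Lemmas~\ref{lem-q-0} and~\ref{lem-q-1}, the sentence $\chi$ and each conjunct of $\bigwedge T$ gets interpreted either as $\B_{-1}$ or as $\A$, the former occurring exactly when the sentence holds in $N$, and $\llbracket\bigwedge T\rrbracket_\gM$ is the join of the interpretations of its conjuncts. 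So if some conjunct of $T$ fails in $N$, then $\llbracket\bigwedge T\rrbracket_\gM\equiv_\M\A$ is the top element and the inequality holds trivially; otherwise $N\models T$, we have $\llbracket\bigwedge T\rrbracket_\gM\equiv_\M\B_{-1}$, and it remains to show $\llbracket\chi\rrbracket_\gM\equiv_\M\B_{-1}$, i.e.\ that $N\models\chi$.

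This last step is the heart of the matter and the main obstacle. The point is that $N$ is a \emph{computable} classical structure: its universe is an object of the base category, hence computably identifiable with $\omega$ and carrying decidable equality; and its function symbols are interpreted by morphisms of the base category, which are computable functions. Since moreover $N\models T\supseteq Q$, the Tennenbaum argument goes through: each fact it uses is either provable in $Q$ (the $\Sigma^0_1$-completeness and the decidability of $\Delta^0_0$-formulas at standard parameters) or is one of $\alpha_1,\dots,\alpha_4$ — this is exactly why the finite theory $T$ suffices in place of $\mathrm{PA}$, no instance of induction being needed. Concretely, were $N$ non-standard, a non-standard element $n\in N$ together with the code $m$ furnished by $\alpha_4$ would make $C=\{e\in\omega:N\models p_e\mid m\}$ a separation of $A$ and $B$ (using $\alpha_1$, $\alpha_2$ and the uniqueness built into $\alpha_4$) which is moreover computable (using $\alpha_3$ and the computability of $N$), contradicting the computable inseparability of $A,B$. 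Hence $N$ is standard, its reduct to the sublanguage of $\chi$ is the standard model of arithmetic, and so $N\models\chi$ because $\chi$ is classically true. This shows every structure in $[\hiB,\A]_{\P_\M}$ satisfies $\bigwedge T\to\chi$; in particular $\bigwedge T\to\mathrm{Con}(\mathrm{PA})\in\mathrm{Th}([\hiB,\A]_{\P_\M})$. But $\bigwedge T\to\mathrm{Con}(\mathrm{PA})\notin\mathrm{IQC}$: otherwise, since $\mathrm{HA}$ extends $\mathrm{IQC}$ and $\mathrm{HA}\vdash\bigwedge T$, we would get $\mathrm{HA}\vdash\mathrm{Con}(\mathrm{PA})$, hence $\mathrm{PA}\vdash\mathrm{Con}(\mathrm{PA})$, contradicting G\"odel's second incompleteness theorem. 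Combined with soundness (Proposition~\ref{prop-sound}), this gives $\mathrm{Th}([\hiB,\A]_{\P_\M})\supsetneq\mathrm{IQC}$.
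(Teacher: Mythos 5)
Your proposal is correct and follows essentially the same route as the paper: Skolemize the Tennenbaum axioms into a finite $\Pi^0_1$ theory $T$ derivable in HA, use Lemmas \ref{lem-q-0} and \ref{lem-q-1} to reduce satisfaction of $\bigwedge T \to \chi$ to classical truth in the induced computable model, and invoke the Tennenbaum argument to conclude that model is standard. Your added care about making the Skolem function for $\alpha_4$ unique and about including defining axioms for the function symbols occurring in $\mathrm{Con}(\mathrm{PA})$ is a reasonable refinement of the same argument, not a different approach.
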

\begin{proof}
Our proof is inspired by the proof of Theorem \ref{thm-tennenbaum} given above. Let $A$, $B$, $\phi'$ and $\psi'$ as in that proof. We first define a theory $T'$ which consists of $Q$ together with the formulas
\begin{align*}
&\forall e,s \forall s' \geq s ((\phi'(e,s) \to \phi'(e,s')) \wedge (\psi'(e,s) \to \psi'(e,s')))\\
&\forall n,s (\neg \phi'(n,s) \wedge \psi'(n,s))\\
&\forall n,p \exists! a,b(b < p \wedge ap+b=n)\\
&\forall n \exists m \forall k,s < n (\phi'(k,s) \leftrightarrow \exists a,b < n . ap_k=m).
\end{align*}
Then $T'$ is deducible in Peano arithmetic; in particular it holds in the standard model. Note that $T'$ is equivalent to a $\Pi^0_2$-formula. Furthermore, note that there are computable Skolem functions (for example, take the function mapping $n$ to the least witness). Thus, we can get rid of the existential quantifiers; for example, we can replace 
\[\forall n,p \exists a,b(b < p \wedge ap+b=n)\]
by
\[\forall n,p (g(n,p) < p \wedge f(n,p)p+g(n,p)=n)\]
where $f$ is the symbol representing the primitive recursive function sending $(n,p)$ to $n$ divided by $p$, and $g$ is the symbol representing the primitive recursive function sending $(n,p)$ to the remainder of the division of $n$ by $p$. We can also turn $Q$ into a $\Pi^0_1$-theory using the predecessor function.

So, let $T$ consist of a $\Pi^0_1$-formula which is equivalent to $T'$, together with $\Pi^0_1$ defining axioms for the finitely many computable functions we used. Then $T$ is certainly deducible in PA, but it is also deducible in Heyting arithmetic because every $\Pi^0_2$-sentence which is in PA in also in HA, see e.g.\ Troelstra and van Dalen \cite[Proposition 3.5]{troelstra-vandalen-1988}.

Now, if $\llbracket \bigwedge T \rrbracket \equiv_\M \A$, we are done. We may therefore assume this is not the case. Then, by Lemma \ref{lem-q-1} we see that $T$ holds classically in $\gM$. Therefore $T'$ also holds classically in $\gM$, and by the proof of Theorem \ref{thm-tennenbaum} we see that $\gM$ is classically the standard model. Therefore $\chi$ holds classically in $\gM$ so we see by Lemma \ref{lem-q-1} that $\llbracket \chi \rrbracket \equiv_\M \B_{-1}$.
\end{proof}

\section{Decidable frames}\label{sec-dec}

In the last section we saw that there are languages such that even for every interval $[\hiB,\A]_{\P_\M}$ we have that $\Th\left([\hiB,\A]_{\P_\M}\right) \not= \mathrm{IQC}$. However, note that Heyting arithmetic, like Peano arithmetic is undecidable. We therefore wonder: what happens if we look at decidable theories? In the classical case, we know that every decidable theory has a decidable model. The intuitionistic case was studied by Gabbay \cite{gabbay-1976} and Ishihara, Khoussainov and Nerode \cite{ikn-1998-2,ikn-1998}, culminating in the following result.

\begin{defi}
A Kripke model is \emph{decidable} if the underlying Kripke frame is computable, the universe at every node is computable and the forcing relation
\[w \Vdash \phi(a_1,\dots,a_n)\]
is computable.
\end{defi}

\begin{defi}
A theory is \emph{decidable} if its deductive closure is computable and equality is decidable, i.e.\
\[\forall x,y (x = y \vee \neg x=y)\]
holds.
\end{defi}

\begin{thm}{\rm (\cite[Theorem 5.1]{ikn-1998-2})}\label{thm-ikn}
Every decidable theory $T$ has a decidable Kripke model, i.e.\ a decidable Kripke model whose theory is exactly the set of sentences deducible from $T$.\footnote{In \cite{ikn-1998-2} this result is stated for first-order languages without equality and function symbols. However,  we can apply the original result to the language with an additional binary predicate $R$ representing equality and to the theory $T'$ consisting of $T$ extended with the equality axioms. Using this equality we can now also represent functions by relations in the usual way.}
\end{thm}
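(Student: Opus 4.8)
The plan is to run an effective version of the standard Kripke--completeness proof for $\mathrm{IQC}$ --- the canonical-model construction described in Troelstra and van Dalen \cite{troelstra-vandalen-1988} --- carrying out every step uniformly computably using the decidability of $T$; this is in essence the argument of Ishihara, Khoussainov and Nerode \cite{ikn-1998-2}. By the footnote I first reduce to a \emph{relational} language: replace each function symbol by its graph relation and $=$ by a binary relation $R$, and let $T'$ be the translation of $T$ together with the finitely many axioms expressing that $R$ is a congruence and that each graph relation is total and single-valued. Then $T'$ is again decidable, and a decidable Kripke model of $T'$ gives one of $T$ by passing at each node to the quotient by the forced instances of $R$; this is effective because $T'$ proves $\forall x,y\,(R(x,y) \vee \neg R(x,y))$, so those instances are decidable and representatives can be computed. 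Hence it suffices to treat a decidable theory $T$ in a relational language. Fix a computable expansion $\Sigma^{+}$ of $\Sigma$ by fresh constants $c_0, c_1, \dots$ and a computable enumeration $(\phi_n)_{n \in \omega}$ of all $\Sigma^{+}$-sentences.

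I would then build, in stages, a computable finitely branching subtree $W \subseteq \omega^{<\omega}$, labelling each node $\sigma$ with a finite set $\Gamma_\sigma$ of $\Sigma^{+}$-sentences and a finite set $D_\sigma$ of activated constants, both monotone along the tree, under the invariant that $\Gamma_\sigma$ is consistent with $T$ while no sentence outside $\Gamma_\sigma$, and no sentence excluded at a node above $\sigma$, is a $\Sigma^{+}$-consequence of $T$. At the node $\sigma$ of length $s$ one processes, in a fixed priority order, the finitely many requirements of level $s$: decide whether $\Gamma_\sigma \cup T \vdash \phi_s$ and record the answer; if a disjunction of low index lies in $\Gamma_\sigma$, add a $T$-consistent disjunct (making $\Gamma_\sigma$ locally prime); if an existential of low index lies in $\Gamma_\sigma$, activate a fresh constant and add the corresponding witness; for each low-index implication or universal not derivable from $\Gamma_\sigma \cup T$, spawn an immediate successor that refutes it --- adding the antecedent while excluding the consequent, respectively activating a fresh constant while excluding the instance --- and commit that successor never to derive the excluded sentence; and for each low-index $\phi_n \notin \mathrm{Cn}(T)$, keep alive a branch committed never to derive $\phi_n$. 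All consistency and derivability tests here reduce to $T$-derivability, which is decidable; and since $T$ is deductively closed, ``$T \vdash \phi_n$'' is simply ``$\phi_n \in T$''. Along each branch $b$ the union $\Gamma_b$ is then a prime $\Sigma^{+}$-theory with the existence property that is consistent with $T$, and $D_b$ is its set of activated constants; these branches serve as the minimal nodes of the usual canonical model.

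The main obstacle is to make the forcing relation $\sigma \Vdash \phi(\bar a)$ genuinely \emph{computable} and not merely $\Pi^0_1$: its clauses for $\to$ and for $\forall$ quantify over the whole infinite cone above $\sigma$. This is exactly why the construction must be so rigid --- every failure of an implication or universal at a node is witnessed by an \emph{immediate} successor, and the branching is finite and computable. One then proves a \emph{truth lemma with bounds}: once $|\sigma|$ is large enough relative to $n$, the value of $\sigma \Vdash \phi_n$ is read off from the recorded status in $\Gamma_\sigma$, so the recursive evaluation of $\Vdash$ at $\sigma$ only ever descends through a finite neighbourhood of $\sigma$ that is computably bounded in $n$ and $|\sigma|$; the remaining (short) nodes are handled using monotonicity of forcing together with the fact that each $\phi_n$ is decided permanently once $|\sigma|$ passes its threshold. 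Verifying this bounded truth lemma, and checking that the competing commitments --- local primeness, existence witnesses, refuting successors, and the avoidance branches --- can all be honoured at once without one overwriting another, is the technical heart of the argument.

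Finally one assembles the pieces: $W$ is computable and finitely branching, each $D_\sigma$ is computable, and by the bounded truth lemma $\Vdash$ is computable, so the resulting Kripke model $\gK$ is decidable. Since each $\Gamma_b$ is a prime $\Sigma^{+}$-theory with the existence property consistent with $T$, and nothing excluded anywhere in $W$ contradicts $\mathrm{Cn}(T)$, an induction on formulas shows that every node of $\gK$ forces every sentence of $\mathrm{Cn}(T)$, so $\mathrm{Cn}(T) \subseteq \Th(\gK)$; conversely, for each $\phi \notin \mathrm{Cn}(T)$ the branch committed to avoiding $\phi$ gives a node not forcing $\phi$, so $\Th(\gK) = \mathrm{Cn}(T)$. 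Translating back along the reduction of the first paragraph then yields a decidable Kripke model of the original theory $T$ with equality and function symbols.
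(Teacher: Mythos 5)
The paper does not prove this statement: Theorem~\ref{thm-ikn} is imported wholesale from Ishihara, Khoussainov and Nerode, and the only argument the paper itself supplies is the footnote's reduction to the relational, equality-free case. Your first paragraph reproduces that reduction correctly (and in somewhat more detail than the footnote, e.g.\ noting that the quotient by the forced instances of $R$ is effective because $T$ proves decidable equality, so the forced and refuted instances of $R$ are stable along the frame). Everything after that is an attempt to reprove the cited theorem from scratch, which the paper deliberately does not do.

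As a proof of the cited result, your sketch follows the right general strategy --- an effectivised canonical-model construction over a computable, finitely branching tree of finite theories, with all derivability tests reduced via the deduction theorem to membership in the computable set $\mathrm{Cn}(T)$ --- but it has a genuine gap exactly where you flag one. The decidability of the forcing relation is the whole content of the theorem, and your ``truth lemma with bounds'' is asserted, not established: you need to show that the value of $\sigma \Vdash \phi_n$ is determined by a computably bounded finite neighbourhood of $\sigma$, which requires proving that the refuting successors you spawn for implications and universals are \emph{exhaustive} (every failure of $\to$ or $\forall$ in the infinite cone above $\sigma$ is already witnessed by an immediate successor), and simultaneously that the commitments to exclude certain sentences along designated branches never collide with the primeness and witnessing requirements. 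Without verifying that these requirements can be jointly satisfied and that the resulting recursion on $\Vdash$ terminates, the construction does not yet yield a decidable model. Since the paper treats this as a black box, the correct move in context is simply the citation plus the footnote reduction; if you do want a self-contained proof, the interaction between the exclusion commitments and the saturation requirements is the part that must be written out.
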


Our next result shows how to encode such decidable Kripke models in intervals of the hyperdoctrine of mass problems. Unfortunately we do not know how to deal with arbitrary decidable Kripke frames; instead we have to restrict to those without infinite ascending chains. As we will see later in this section, this nonetheless still proves to be useful.

\begin{thm}\label{thm-dec-fram}
Let $\gK$ be a decidable Kripke model which is based on a Kripke frame without infinite ascending chains. Then there is an interval $[\hiB,\A]_{\P_\M}$ and a structure $\gM$ in $[\hiB,\A]_{\P_\M}$ such that the theory of $\gM$ is exactly the theory of $\gK$.

Furthermore, if we allow infinite ascending chains, then this still holds for the fragments of the theories without universal quantifiers.
\end{thm}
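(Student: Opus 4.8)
The plan is to encode a decidable Kripke model $\gK$ directly as a structure in an interval $[\hiB,\A]_{\P_\M}$, using the nodes of the Kripke frame (together with the elements of the domains) as indices into the $\omega$-mass problem structure. First I would fix a computable enumeration of the underlying frame $(W,\leq)$ and, for each $w \in W$, of the domain $D_w$ (which may grow along $\leq$). Since $\gK$ is decidable, all of $\leq$, the domains, and the forcing relation $w \Vdash R(\vec a)$ are computable. I would then let the index set of the $\omega$-mass problem be (a computable copy of) the set of pairs $(w,\vec a)$ with $\vec a \in D_w^{\,k}$; for each such pair I would assign a mass problem of the form $\B_{(w,\vec a)}$ which codes, in the style of the standard translation from Kripke semantics to Brouwer-algebra semantics, the ``state of knowledge at $w$''. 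Concretely, the bottom elements $\B_i$ should be chosen so that $\B_{(w,\vec a)}$ is Turing-below $\B_{(v,\vec a)}$ whenever $v \leq w$ (so that the computable maps used in the category $\bC$ of Definition \ref{def-interval} are available), and the top $\A$ should be a fixed mass problem of high enough degree to be incomparable with all the $\B_i$, e.g.\ a sufficiently generic or a $1$-random real's degree, so that the hypothesis $\B_i \not\geq_\M \A$ of Definition \ref{def-interval} holds. For an $n$-ary relation symbol $R$ I would set $\llbracket R\rrbracket_{(w,\vec a)} \equiv_\M \B_{-1}\oplus\B_w\oplus\cdots$ if $w \Vdash R(\vec a)$ and $\equiv_\M \A$ otherwise; this is exactly the kind of two-valued assignment already used in Lemmas \ref{lem-eq-ival} and \ref{lem-q-0}.

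The core of the proof is then an induction on formulas showing $\llbracket \phi(\vec a)\rrbracket_{(w,\vec a)} \equiv_\M \B_{-1}\oplus\B_w\oplus\cdots$ if $w\Vdash\phi(\vec a)$ and $\equiv_\M\A$ otherwise, i.e.\ that the interpretation in $[\hiB,\A]_{\P_\M}$ is Boolean-valued over the Kripke frame in the expected way. The atomic case is the definition; conjunction, disjunction, implication and $\exists$ go through using the explicit component-wise formulas for $\oplus$, $\otimes$, $\to$ in Proposition \ref{prop-mh-brouwer} and for $\exists_\alpha$ in Proposition \ref{prop-adjoint}, together with the fact that in any Brouwer algebra the sub-$\{0,1\}$ fragment behaves classically (as already exploited in Lemma \ref{lem-q-0}); the only subtlety is tracking the uniformity of the reductions across all indices $(w,\vec a)$, which is where the decidability of $\Vdash$ is used. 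The universal quantifier case is the delicate one: here $\llbracket\forall y\,\psi\rrbracket_{(w,\vec a)}$ is, by the formula for the left adjoint in Proposition \ref{prop-adjoint} (and the $\oplus\B$-correction in Theorem \ref{thm-ival-hyperdoc}), a join over \emph{all} nodes $v\geq w$ and all $b\in D_v$ of $\B_v\to\llbracket\psi(\vec a,b)\rrbracket_{(v,\vec a,b)}$, so a solution must simultaneously witness $w'\Vdash\psi(\vec a,b)$ for every $w'\geq w$ — which is precisely the Kripke clause for $\forall$. To turn this equivalence into a genuine $\leq_\M$ reduction one needs to search through the (possibly infinitely many) successors of $w$, and this is exactly where the hypothesis that the frame has no infinite ascending chains enters: it guarantees that this search is well-founded and that a single functional can assemble the witnesses. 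When infinite ascending chains are allowed, this argument breaks for $\forall$ but survives verbatim for the $\forall$-free fragment, giving the ``Furthermore'' clause.

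I would then conclude: by the induction, for a sentence $\phi$ we have $\llbracket\phi\rrbracket_{\gM}=\B_{-1}$ (the bottom of the relevant interval, i.e.\ ``satisfied'') iff the root node of $\gK$ forces $\phi$, which by soundness and completeness of Kripke semantics is iff $\phi$ is in the theory of $\gK$; hence $\Th(\gM)=\Th(\gK)$. One also has to check that the chosen data actually define an interval in the sense of Definition \ref{def-interval} — i.e.\ verify $(\A,\A,\dots)\geq_\Mh\hB\geq_\Mh(\B_{-1},\B_{-1},\dots)$, that the transition maps coming from frame morphisms and from term interpretation lie in the restricted category $\bC$ (they do, because along $\leq$ the $\B$'s only get Turing-easier, uniformly), and that equality in $\gK$ can be handled via the built-in $=_X$ of Lemma \ref{lem-eq-ival} or, following the footnote to Theorem \ref{thm-ikn}, via a relation symbol. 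The main obstacle I anticipate is the $\forall$-step: getting the well-foundedness of the frame to yield a genuinely uniform, total Turing functional computing the universal witness from the bound-variable data, rather than merely an index-by-index equivalence of degrees, and making sure the $\oplus\B_j$ corrections from Theorem \ref{thm-ival-hyperdoc} don't interfere with that uniformity.
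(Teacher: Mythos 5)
There is a genuine gap, and it is at the very first step: the indexing scheme you propose is not available in the hyperdoctrine framework. A structure $\gM$ in $[\hiB,\A]_{\P_\M}$ has a single universe $M$ (an object of the base category, i.e.\ $\omega$ or a finite set), and the interpretation of an $n$-ary relation is an element of the Brouwer algebra assigned to $M^n$ --- that is, a family of mass problems indexed by \emph{tuples $\vec a$ of elements of the universe only}. There is no slot for a Kripke node $w$ in the index, so an assignment of the form $\llbracket R\rrbracket_{(w,\vec a)}$ does not type-check; and if you tried to repair this by making the universe consist of pairs $(w,a)$, the quantifiers of $\gM$ would range over those pairs and you would no longer be interpreting the same language over the same domain as $\gK$. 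Worse, even granting some such indexing, the two-valued assignment you describe (each component either $\B_{-1}\oplus\B_{a_1}\oplus\cdots$ or $\A$) is self-defeating: as Lemma \ref{lem-q-0} shows --- the very lemma you cite --- a structure whose interpretations are all two-valued in this sense has an essentially \emph{classical} theory, so it cannot have the same theory as a Kripke model that refutes, say, $R\vee\neg R$.

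The idea your proposal is missing is the one the paper's proof is built around: for each tuple $\vec a$, the single mass problem $\llbracket\phi(\vec x)\rrbracket_{\langle a_1,\dots,a_n\rangle}$ must encode the entire upward-closed set $Y=\{w\mid w\Vdash\phi(\vec a)\}$ at once. The paper does this by fixing an antichain $f_0,f_1,\dots$ in the Turing degrees, setting $\D=\{g\mid\exists i\,(g\leq_T f_i)\}$, representing the node $t_j$ by $C(\{f_i\mid i\neq j\})\cup\overline{\D}$, and defining $\alpha(Y)=\bigcup\{\,j\conc\left(\left(C(\{f_i\mid i\neq j\})\cup\overline{\D}\right)\otimes\A\right)\mid t_j\in Y\,\}$, so that an element of $\alpha(Y)$ \emph{names} a node in $Y$ together with a witness. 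The top element $\A$ is not an arbitrary sufficiently complicated degree but is itself built from representatives of incomparable pairs of nodes; this is what makes $\alpha$ injective and makes the implication and quantifier clauses go through (it is the Skvortsova-style embedding, via \cite{kuyper-2014}, of the algebra of upward-closed subsets of the frame into an interval of $\M$). Your instinct about where the no-infinite-ascending-chain hypothesis enters is right --- it is used only in the $\forall$ clause, to guarantee that an iterative search through ever-higher nodes terminates --- but that argument only makes sense once the interpretation of a formula is a single node-naming mass problem $\alpha(Y)$ rather than a Boolean value per node.
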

\begin{proof}
Let $T = \{t_0,t_1,\dots\}$ be a computable representation of the poset $T$ on which $\gK$ is based. Let $f_0,f_1,\dots$ be an antichain in the Turing degrees and let $\D = \{g \mid \exists i (g \leq_T f_i)\}$. Consider the collection $\V = \{C(\{f_i \mid i \in I\} \cup \overline{\D}) \mid I \subseteq \omega\}$. By Kuyper \cite[Theorem 3.3]{kuyper-2014}, this is a sub-implicative semilattice of $[C(\{f_i \mid i \in \omega\}) \cup \overline{\D},\overline{\D}]_\M$. We will use the mass problems $C(\{f_i \mid i \not= j\}) \cup \overline{\D}$ to represent the points $t_j$ of the Kripke frame $T$. If $T$ were finite, we would only have to consider a finite sub-upper semilattice of $\V$, and by Skvortsova \cite[Lemma 2]{skvortsova-1988} the meet-closure of this would be exactly the Brouwer algebra of upwards closed subsets of $T$. However, since in our case $T$ might be infinite, we need to suitably generalise this to arbitrary `meets'.

Let us now describe how to do this. First, we define $\A$:
\begin{align*}
\A = \{k_1 \conc k_2 \conc \left(C(\{f_i \mid i \not\in \{k_1,k_2\}\}) \cup \overline{\D}\right) \mid \text{$t_{k_1}$ and $t_{k_2}$ are incomparable})\}
\end{align*}
if $T$ is not a chain, and $\A = \overline{\D}$ otherwise.
The idea behind $\A$ is that if $t_{k_1}$ and $t_{k_2}$ are incomparable in $T$, then there should be no mass problem representing a point above their representations.

Now, let $\U$ be the collection of upwards closed subsets of $T$. We then define the map $\alpha: \U \to \M$ by:
\[\alpha(Y) = \bigcup\{j \conc \left(\left(C(\{f_i \mid i \not= j\}) \cup \overline{\D}\right) \otimes \A\right) \mid t_j \in Y\},\]
and $\alpha(\emptyset) = \A$.
Now let $\B_{-1} = \alpha(T)$ and let $\B_i = \alpha(Z_i)$, where $Z_i$ is the set of nodes where $i$ is in the domain of $\gK$. Then $\alpha: \U \to [\B_{-1},\A]$ as a function; we are not yet claiming that it preserves the Brouwer algebra structure. We will prove a stronger result for a suitable sub-collection of $\U$ below.

First, let us show that $\alpha$ is injective. Indeed, assume $\alpha(Y) \leq_\M \alpha(Z)$. We will show that $Y \supseteq Z$. By applying Lemma \ref{lem-inf-meet-upw} below twice we then have that for every $j$ with $t_j \in Z$ there exists a $k$ with $t_k \in Y$ such that either $C(\{f_i \mid i \not= k\}) \cup \overline{\D}  \leq_\M C(\{f_i \mid i \not= j\}) \cup \overline{\D}$ or $\A \leq_\M C(\{f_i \mid i \not= j\}) \cup \overline{\D}$. In the first case, towards a contradiction let us assume that $k \not= j$. Then $f_k$ computes an element of $C(\{f_i \mid i \not= j\}) \cup \overline{\D}$ and therefore $f_k \in C(\{f_i \mid i \not= k\}) \cup \overline{\D}$ since the latter is upwards closed. However, this contradicts the fact that the $f_i$ form an antichain in the Turing degrees. Thus, $k = j$ and therefore $t_j \in Y$.

In the latter case, we have that $C(\{f_i \mid i \not\in \{k_1,k_2\}) \cup \overline{\D} \leq_\M C(\{f_i \mid i \not= j\}) \cup \overline{\D}$ for some $k_1,k_2 \in \omega$ for which $t_{k_1}$ and $t_{k_2}$ are incomparable. Without loss of generality, let us assume that $k_1 \not= j$. Then, reasoning as above, we see that $f_{k_1} \in C(\{f_i \mid i \not\in \{k_1,k_2\}) \cup \overline{\D}$, a contradiction.

For ease of notation, let us assume the union of the universes of $\gK$ is $\omega$; the general case follows in the same way. Let $\gM$ be the structure with functions as in $\gK$, and let the interpretation of a relation $\llbracket R(x_1,\dots,x_n)\rrbracket_{\langle a_1,\dots,a_n\rangle}$ be $\alpha(Y)$, where $Y$ is exactly the set of nodes where $R(a_1,\dots,a_n)$ holds in $\gK$.

We show that $\gM$ is as desired. To this end, we claim: for every formula $\phi(x_1,\dots,x_n)$ and every sequence $a_1,\dots,a_n$,
\[\llbracket \phi(x_1,\dots,x_n) \rrbracket_{\langle a_1,\dots,a_n\rangle} \equiv_\M \alpha(Y),\]
where $Y$ is exactly the set of nodes where $a_1,\dots,a_n$ are all in the domain and $\phi(a_1,\dots,a_n)$ holds in the Kripke model $\gK$. Furthermore, we claim that this reduction is uniform in $a_1,\dots,a_n$ and in $\phi$.
We prove this by induction on the structure of $\phi$.
First, if $\phi$ is atomic, this follows directly from the choice of the valuations, from the fact that $\gK$ is decidable and from Lemma \ref{lem-eq-ival}.

Next, let us consider $\phi(x_1,\dots,x_n) = \psi(x_1,\dots,x_n) \vee \chi(x_1,\dots,x_n)$. Let $U$ be the set of nodes where $\psi(a_1,\dots,a_n)$ holds in $\gK$ and similarly let $V$ be the set of nodes where $\chi(a_1,\dots,a_n)$ holds.
By induction hypothesis and by the definition of the interpretation of $\vee$ we have
\begin{align*}
&\llbracket \psi(x_1,\dots,x_n) \vee \chi(x_1,\dots,x_n) \rrbracket_{\langle a_1,\dots,a_n\rangle}\\
\equiv_\M\;& \alpha(U) \otimes \alpha(V)\\
=\;\;\;\;& \bigcup\{j \conc \left(\left(C(\{f_i \mid i \not= j\}) \cup \overline{\D}\right) \otimes \A\right) \mid t_j \in U\}\\
\;\;\;\;\;\otimes\;&\bigcup\{j \conc \left(\left(C(\{f_i \mid i \not= j\}) \cup \overline{\D}\right) \otimes \A\right) \mid t_j \in V\}.
\end{align*}
We need to show that this is equivalent to
\[\alpha(Y) = \bigcup\{j \conc \left(\left(C(\{f_i \mid i \not= j\}) \cup \overline{\D}\right) \otimes \A\right) \mid t_j \in Y\},\]
where $Y$ is the set of nodes where $\phi(a_1,\dots,a_n)$ holds. First, let $j \conc f \in \alpha(Y)$. Then $\phi(a_1,\dots,a_n)$ holds at $t_j$. Thus, by the definition of truth in Kripke frames, we know that at least one of $\psi(a_1,\dots,a_n)$ and $\chi(a_1,\dots,a_n)$ holds in $t_j$, and because our frame is decidable we can compute which of them holds. So, send $j \conc f$ to $0 \conc j \conc f$ if $\psi(a_1,\dots,a_n)$ holds, and to $1 \conc j \conc f$ otherwise. Thus, $\alpha(U) \otimes \alpha(V) \leq_\M \alpha(Y)$. Conversely, if either $\psi(a_1,\dots,a_n)$ or $\chi(a_1,\dots,a_n)$ holds then $\phi(a_1,\dots,a_n)$ holds, so the functional sending $i \conc j \conc f$ to $j \conc f$ witnesses that $\alpha(Y) \leq_\M \alpha(U) \otimes \alpha(V)$.

The proof for conjunction is similar. Next, let us consider implication. So, let $\phi(x_1,\dots,x_n) = \psi(x_1,\dots,x_n) \to \chi(x_1,\dots,x_n)$. Let $U$ be the set of nodes where $\psi(a_1,\dots,a_n)$ holds in $\gK$, let $V$ be the set of nodes where $\chi(a_1,\dots,a_n)$ holds and let $Y$ be the set of nodes where $\phi(a_1,\dots,a_n)$ holds. By induction hypothesis, we know that
\begin{align*}
\llbracket \phi(x_1,\dots,x_n) \rrbracket_{\langle a_1,\dots,a_n\rangle}\\
\equiv_\M \alpha(U) \to_{[\B_{(a_1,\dots,a_n)},\A]} \alpha(V).
\end{align*}
First, note that $\alpha(Y) \geq_\M \alpha(U) \to_{[\B_{(a_1,\dots,a_n)},\A]} \alpha(V)$ is equivalent to $\alpha(Y) \oplus \alpha(U) \geq_\M \alpha(V)$. So, let $k \conc h \in \alpha(Y)$ and $j \conc g \in \alpha(U)$.
Then $t_k \in Y$, $h \in \left(C(\{f_i \mid i \not= k\}) \cup \overline{\D}\right) \otimes \A$, $t_j \in U$ and $g \in \left(C(\{f_i \mid i \not= j\}) \cup \overline{\D}\right) \otimes \A$. We need to uniformly compute from this some $m \in \omega$ with $t_m \in Y$ and an element of $\left(C(\{f_i \mid i \not\in p_m\}) \cup \overline{\D}\right) \otimes \A$. First, if either the first bit of $h$ or $g$ is $1$, then $h$ or $g$, respectively, computes an element of $\A$. So, we may assume this is not the case. Then there are $i_1 \not= j$ and $i_2 \not= k$ such that $g \geq_T f_{i_1}$ and $h \geq_T f_{i_2}$. If $i_1 \not= i_2$ then $h \oplus g \in \overline{\D}$, and if $i_1 = i_2$ then $h \oplus g \in C(\{f_i \mid i \not\in \{k,j\}\})$. So, we have
\[h \oplus g \in C(\{f_i \mid i \not\in \{k,j\}\}) \cup \overline{\D}.\]
There are now two cases: if $t_k$ and $t_j$ are incomparable then 
$k \conc j \conc (h \oplus g) \in \A$.
Otherwise, compute $m \in \{k,j\}$ such that $t_m = \max(t_k,t_j)$. Then, because $t_k \in Y$ and $t_j \in U$, we know that $t_m \in V$ and that $h \oplus g \in C(\{f_i \mid i \not= m\}) \cup \overline{\D}$, which is exactly what we needed.
Since this is all uniform we therefore see
\[\alpha(Y) \geq_\M \llbracket \phi(x_1,\dots,x_n) \rrbracket_{\langle a_1,\dots,a_n\rangle}.\]

Conversely, take any element
\[(e \conc g) \oplus h \in (\alpha(U) \to_\M \alpha(V)) \oplus \B_{(a_1,\dots,a_n)} = \alpha(U) \to_{[\B_{(a_1,\dots,a_n)},\A]} \alpha(V).\]
We need to compute an element of $\alpha(Y)$. Let $Z$ be the collection of nodes where $a_1,\dots,a_n$ are all in the domain. Then $h$ computes some element $\tilde{h} \in \alpha(Z)$, as follows from the definition of $\B_{(a_1,\dots,a_n)}$ and the fact that we have already proven the claim for conjunctions applied to $\llbracket x_1 = x_1 \wedge \dots \wedge x_n = x_n\rrbracket_{\langle a_1,\dots,a_n\rangle}$. If the second bit of $\tilde{h}$ is $1$, then $\tilde{h}$ computes an element of $\A$ and therefore also computes an element of $\alpha(Y)$. So, we may assume it is $0$. Let $k = \tilde{h}(0)$. First compute if $\phi(a_1,\dots,a_n)$ holds in $\gK$ at the node $t_k$; if so, we know that $\tilde{h} \in \alpha(Y)$ so we are done. Otherwise, there must be a node $t_{\tilde{k}}$ (above $t_k$) such that $t_{\tilde{k}} \in U$ but $t_{\tilde{k}} \not\in V$.

Let $\sigma$ be the least string such that $\Phi(e)\left(g \oplus \left(\tilde{k} \conc 0 \conc \sigma\right)\right)(0){\downarrow}$ and such that $\Phi(e)\left(g \oplus \left(\tilde{k} \conc 0 \conc \sigma\right)\right)(1){\downarrow}$ and let $m = \Phi_e\left(g \oplus \left(\tilde{k} \conc 0 \conc \sigma\right)\right)(0)$ (such a $\sigma$ much exist, since there is some initial segment of $\tilde{k} \conc 0 \conc f_{\tilde{k}+1} \in \alpha(U)$ for which this must halt by choice of $g$ and $e$).
Then we see, by choice of $g$ and $e$ that $t_m \in V$ and that
\begin{align*}
\{g\} \oplus C\left(\left\{f_i \mid i \not= \tilde{k}\right\}\right) &\geq_\M \{g\} \oplus \left(\sigma \conc C\left(\left\{f_i \mid i \not= \tilde{k}\right\}\right) \right)\\
&\geq_\M \left(C(\{f_i \mid i \not= m\}) \cup \overline{\D}\right) \otimes \A.
\end{align*}
In fact, since the value at $1$ has also already been decided by choice of $\sigma$, we even get that either
\[\{g\} \oplus C\left(\left\{f_i \mid i \not= \tilde{k}\right\}\right) \geq_\M \A\]
or
\[\{g\} \oplus C\left(\left\{f_i \mid i \not= \tilde{k}\right\}\right) \geq_\M C(\{f_i \mid i \not= m\}) \cup \overline{\D}.\]
In the first case, we are clearly done. Otherwise, we claim: $g \oplus \tilde{h} \in C(\{f_i \mid i \not= m) \cup \overline{\D}$.
We distinguish several cases:
\begin{itemize}
\item If $\tilde{h} \in \overline{\D}$, then $g \oplus \tilde{h} \geq_T \tilde{h} \in \overline{\D}$ and $\overline{\D}$ is upwards closed.
\item Otherwise, $\tilde{h} \geq_T f_i$ for some $i \not= k$. If $i \not= \tilde{k}$, then we have just seen that $g \oplus \tilde{h}$ computes an element of $C(\{f_i \mid i \not= m\}) \cup \overline{\D}$. Since the latter is upwards closed, we see that $g \oplus \tilde{h} \in C(\{f_i \mid i \not= m\}) \cup \overline{\D}$.
\item If $\tilde{h} \geq_T f_{\tilde{k}}$, then $g \oplus \tilde{h} \geq_T \tilde{h} \in C(\{f_i \mid i \not=m)$: after all, $t_m \in V$ while $t_{\tilde{k}} \not\in V$, so $\tilde{k} \not= m$.
\end{itemize}
Thus, $g \oplus \tilde{h}$ uniformly computes an element of $\alpha(Y)$, which is what we needed to show.

Now, let us consider the quantifiers. So, let $\phi(x_1,\dots,x_n) = \forall y \psi(x_1,\dots,x_n,y)$. For every $b \in \omega$, let $U_b$ be the set of nodes where $\psi(a_1,\dots,a_n,b)$ holds in $\gK$, and likewise let $Y$ be the set of nodes where $\phi(a_1,\dots,a_n)$ holds. We need to show that
\[\llbracket \phi(x_1,\dots,x_n) \rrbracket_{\langle a_1,\dots,a_n\rangle} \equiv_\M \alpha(Y).\]
By definition of the interpretation of the universal quantifier and the induction hypothesis, we know that
\begin{align*}
\llbracket \phi(x_1,\dots,x_n) \rrbracket_{\langle a_1,\dots,a_n\rangle} &\equiv_\M \left(\bigoplus_{b \in \omega} \B_{(a_1,\dots,a_n,b)} \to_\M \alpha(U_b)\right) \oplus \B_{a_1} \oplus \dots \oplus \B_{a_n}\\
&= \bigoplus_{b \in \omega} \B_{(a_1,\dots,a_n,b)} \to_{[\B_{(a_1,\dots,a_n)},\A]_\M} \alpha(U_b).\\
\intertext{Let $Z_b$ be the set of nodes where $a_1,\dots,a_n$ and $b$ are in the domain, and let $Z$ be the set of nodes where $a_1,\dots,a_n$ are in the domain. Then we get in the same way as above:}
\llbracket \phi(x_1,\dots,x_n) \rrbracket_{\langle a_1,\dots,a_n\rangle} &\equiv_\M \bigoplus_{b \in \omega} \alpha(Z_b) \to_{[\B_{(a_1,\dots,a_n)},\A]_\M} \alpha(U_b).\\
\intertext{Finally, let us introduce new predicates $R_b(x_1,\dots,x_n)$, which are defined to hold in $\gK$ if $\phi(x_1,\dots,x_n,b)$ holds in $\gK$, and let us introduce new nullary predicates $S_b$ which are defined to hold when all of $a_1,\dots,a_n$ and $b$ are in the domain. Then, applying the fact that we have already proven the claim for implications to $\llbracket S_b \to R_b\rrbracket_{\langle a_1,\dots,a_n\rangle}$, we get}
\llbracket \phi(x_1,\dots,x_n) \rrbracket_{\langle a_1,\dots,a_n\rangle} &\equiv_\M \bigoplus_{b \in \omega} \alpha((Z_b \to U_b) \cap Z).
\end{align*}
We now claim that this is equivalent to $\alpha(Y)$. We have $Y \subseteq (Z_b \to U_b) \cap Z$ by the definition of truth in Kripke frames, which suffices to prove that
\[\bigoplus_{b \in \omega} \alpha((Z_b \to U_b) \cap Z) \leq_\M \alpha(Y).\]

Conversely, let
\[\bigoplus_{b \in \omega} g_b \in \bigoplus_{b \in \omega} \alpha((Z_b \to U_b) \cap Z).\]
We show how to compute an element of $\alpha(Y)$ from this.
If the second bit of $g_0$ is $1$, then $h$ computes an element of $\A$; thus, assume it is $0$.
Let $m_0 = g_0(0)$. First compute if $\phi(a_1,\dots,a_n)$ holds in $\gK$ at the node $\gamma(t_{m_0})$; if so, we know that $g_0 \in \alpha(Y)$ so we are done. Therefore, we may assume this is not the case. So, we can compute a $b_1 \in \omega$ such that $t_{m_0} \not\in Z_{b_1} \to U_{b_1}$ by the definition of truth in Kripke frames. Now consider $g_{b_1}$. If the second bit of $g_{b_1}$ is $1$, then $g_{b_1}$ computes an element of $\A$ so we are done. Otherwise, let $m_1 = g_{b_1}(0)$. Then $t_{m_1} \in Z_{b_1} \to U_{b_1}$ and $g_{b_1} \in C(f_i \mid i \not= m_1) \cup \overline{\D}$. Then $m_1 \not\leq m_0$ because $t_{m_0} \not\in Z_{b_1} \to U_{b_1}$. If $m_1$ is incomparable with $m_0$, then $m_0 \conc m_1 \conc (g_{b_1} \oplus h) \in \A$ so we are done. Thus, the only remaining case is when $m_1 > m_0$.

Iterating this argument, if it does not terminate after finitely many steps, we obtain a sequence $m_0 < m_1 < m_2 < \dots$. However, we assumed that our Kripke frame does not contain any infinite ascending chains, so the algorithm has to terminate after finitely many steps. Thus, 
\[\bigoplus_{b \in \omega} \alpha((Z_b \to U_b) \cap Z) \geq_\M \alpha(Y).\]
We note that this is the only place in the proof where we use the assumption about infinite ascending chains.

Finally, we consider the existential quantifier. To this end, let $\phi(x_1,\dots,x_n) = \exists y \psi(x_1,\dots,x_n,y)$. Let $U_b$ and $Z$ be as for the universal quantifier. Then the induction hypothesis tells us that
\[\llbracket \phi(x_1,\dots,x_n) \rrbracket_{\langle a_1,\dots,a_n\rangle} \equiv_\M \bigcup\{b \conc \alpha(Y_b) \mid b \in \omega\}.\]
First, since $Y_b \subseteq Z$, we certainly have that $\alpha(Z) \leq_\M \bigcup\{b \conc \alpha(Y_b) \mid b \in \omega\}$. Conversely, let $j \conc f \in \alpha(Z)$. Then $f \in \left(C(f_i \mid i \not= j) \cup \overline{\D}\right) \otimes \A$ and $t_j \in Z$. Thus, there is some $b \in \omega$ such that $\psi(a_1,\dots,a_n,b)$ holds, and therefore by induction hypothesis $j \conc f \in \alpha(Y_b)$. Furthermore, since $\gK$ is decidable we can compute such a $b$. Thus, $\alpha(Z) \geq_\M \bigcup\{b \conc \alpha(Y_b) \mid b \in \omega\}$, which completes the proof of the claim.

\bigskip
Thus, by the claim we have that, for any sentence $\phi$, that $\llbracket \phi \rrbracket = \alpha(Y)$, where $Y$ is the set of nodes where $\phi$ holds in the Kripke model $\gK$. Furthermore, $\alpha$ is injective so $\alpha(Y) = \B_{-1}$ if and only if $Y = T$. So, $\phi$ holds in $\gM$ if and only if $Y = T$ if and only if $\phi$ holds in $\gK$, which is what we needed to show.

For the second part of the theorem, note that we only used the assumption about infinite ascending chains in the part of the proof dealing with the universal quantifier.
\end{proof}

\begin{lem}\label{lem-inf-meet-upw}
Let $\C \subseteq \omega^\omega$ be non-empty and upwards closed under Turing reducibility, let $\E_i \subseteq \omega^\omega$ and let $\bigcup \{i \conc \E_i\} \leq_\M \C$. Then there is an $i \in \omega$ such that $\E_i \leq_\M \C$.
\end{lem}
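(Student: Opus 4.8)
The plan is to exploit the upward closure of $\C$ to ``pad'' an arbitrary solution of $\C$ with a fixed finite initial segment, which pins down in advance which of the disjuncts $i \conc \E_i$ the given reduction lands in.

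First I would fix a Turing functional $\Phi$ witnessing $\bigcup\{i \conc \E_i\} \leq_\M \C$, so that $\Phi(g)$ is total and lies in $\bigcup\{i \conc \E_i\}$ for every $g \in \C$. Since $\C$ is non-empty, I would pick some $g_0 \in \C$; then $\Phi(g_0)$ begins with a value $i_0 := \Phi(g_0)(0)$ which is already determined by a finite initial segment $\sigma = g_0 \restriction u$ of $g_0$, where $u$ is the use of the computation $\Phi(g_0)(0)$. From now on $i_0$ and $\sigma$ are fixed data.

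Next I would define the Turing functional $\Psi$ which, on oracle $g$, computes $\Phi(\sigma \conc g)$ and deletes its first value, i.e.\ $\Psi(g)(n) = \Phi(\sigma \conc g)(n+1)$, and I would check that it witnesses $\E_{i_0} \leq_\M \C$. Given $g \in \C$, note that $g$ is computable from $\sigma \conc g$, so $\sigma \conc g \geq_T g$ and hence $\sigma \conc g \in \C$ by upward closure. Since $\sigma \conc g$ agrees with $g_0$ on the first $u$ values, the use principle gives $\Phi(\sigma \conc g)(0) = \Phi(g_0)(0) = i_0$; and since $\sigma \conc g \in \C$, the total sequence $\Phi(\sigma \conc g)$ lies in $\bigcup\{i \conc \E_i\}$, so it must be of the form $i_0 \conc h$ with $h \in \E_{i_0}$, whence $\Psi(g) = h \in \E_{i_0}$. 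The only real idea is this padding trick; everything else is routine bookkeeping about uses of computations, so I do not expect a genuine obstacle.
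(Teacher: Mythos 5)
Your proof is correct and is essentially the paper's own argument: both rest on the padding trick of prepending a fixed finite string $\sigma$ that forces $\Phi$'s first output value, with upward closure of $\C$ guaranteeing that $\sigma \conc g$ is still in $\C$. The only (immaterial) difference is that the paper takes $\sigma$ to be the least string on which $\Phi(\sigma)(0)$ converges, whereas you take $\sigma$ to be an initial segment of a chosen $g_0 \in \C$ covering the use.
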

\begin{proof}
Let $\Phi_e(\C) \subseteq \bigcup \{i \conc \E_i\}$. Let $\sigma$ be the least string such that $\Phi_e(\sigma)(0){\downarrow}$. Such a string must exist, because $\C$ is non-empty. Let $i = \Phi_e(\sigma)(0)$. Then:
\[\C \geq_\M \sigma \conc \C \geq_\M \E_i,\]
as desired.
\end{proof}

Our proof relativises if our language does not contain function symbols, which gives us the following result.

\begin{thm}\label{thm-dec-fram-rel}
Let $\gK$ be a Kripke model for a language without function symbols which is based on a Kripke frame without infinite ascending chains. Then there is an interval $[\hiB,\A]_{\P_\M}$ and a structure $\gM$ in $[\hiB,\A]_{\P_\M}$ such that the theory of $\gM$ is exactly the theory of $\gK$.

Furthermore, if we allow infinite ascending chains, then this still holds for the fragments of the theories without universal quantifiers.
\end{thm}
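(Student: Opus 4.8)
The plan is to relativise the proof of Theorem~\ref{thm-dec-fram}. Since $\gK$ need no longer be decidable, fix once and for all an oracle $X \in \omega^\omega$ powerful enough to compute everything about $\gK$: an enumeration $t_0, t_1, \dots$ of the underlying poset $T$, the partial order on $T$, the sets $Z_i$ of nodes at which $i$ lies in the domain of $\gK$, and the forcing relation $w \Vdash R(a_1, \dots, a_n)$. Now carry out the construction of Theorem~\ref{thm-dec-fram} verbatim, but choose the antichain $f_0, f_1, \dots$ in the Turing degrees so that in addition $f_i \geq_T X$ for every $i$ (such an antichain exists, e.g.\ by relativising to $X$ the usual construction of a Turing antichain), and replace $\overline{\D}$ everywhere by $\overline{\D} \cap \{g \mid g \geq_T X\}$, which is again a non-empty upwards closed set disjoint from all the cones $\{g \mid g \leq_T f_i\}$. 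With this change, an inspection of the definitions of $\A$, of $\B_{-1} = \alpha(T)$ and $\B_i = \alpha(Z_i)$, and of $\alpha(Y)$ shows that every element of each of these mass problems, and more generally every element of every interval $[\B_{(a_1,\dots,a_n)},\A]$ arising in the proof, uniformly computes $X$. The hypotheses of \cite[Theorem~3.3]{kuyper-2014} and \cite[Lemma~2]{skvortsova-1988} used in the proof concern only the $f_i$ forming an antichain and $\overline{\D}$ being a non-empty upwards closed set avoiding the $f_i$-cones, so all the structural facts about $\alpha$ --- in particular that it maps into the relevant intervals and is injective (which uses Lemma~\ref{lem-inf-meet-upw}) --- remain valid.

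With this setup in place, the inductive claim at the heart of the proof of Theorem~\ref{thm-dec-fram}, namely that $\llbracket \phi(x_1,\dots,x_n)\rrbracket_{\langle a_1,\dots,a_n\rangle} \equiv_\M \alpha(Y)$ where $Y$ is the set of nodes at which $\phi(a_1,\dots,a_n)$ holds in $\gK$, goes through unchanged once ``computable'' is read as ``$X$-computable'' at the finitely many points where decidability of $\gK$ was invoked: deciding which disjunct of a disjunction holds at a node, deciding at a node whether a subformula holds, computing the maximum of two comparable nodes, locating a node witnessing the failure of an implication or of a $\forall y\,\psi$, and computing a witness for an existential quantifier. In each of these steps the Turing functional effecting the reduction is applied only to inputs lying in mass problems (or their joins with $\B_{(a_1,\dots,a_n)}$, as occurs in the implication and quantifier cases) all of whose elements compute $X$; it may therefore consult $X$ and still be a genuine, unrelativised Turing functional. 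Since injectivity of $\alpha$ survives, $\phi$ holds in $\gM$ iff $\alpha(Y) \equiv_\M \B_{-1}$ iff $Y = T$ iff $\phi$ holds in $\gK$, so $\Th(\gM) = \Th(\gK)$; and, exactly as in Theorem~\ref{thm-dec-fram}, the hypothesis forbidding infinite ascending chains is used only in the universal quantifier case, whence the ``furthermore'' clause.

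It remains to see why the language must be free of function symbols. A structure for $\Sigma$ in $[\hiB,\A]_{\P_\M}$ must interpret every $n$-ary function symbol as a morphism $M^n \to M$ in the base category $\bC$ of Definition~\ref{def-interval}, i.e.\ as a genuinely computable function (satisfying the side condition on the $\B_i$); if the functions of $\gK$ are only $X$-computable there is in general no way to provide this. When $\Sigma$ has no function symbols, by contrast, $\gM$ need only supply the universe (which we take to be $\omega$, identifying it with the union of the domains of $\gK$ as in the proof of Theorem~\ref{thm-dec-fram}) together with the elements $\llbracket R(x_1,\dots,x_n)\rrbracket_{\langle a_1,\dots,a_n\rangle} = \alpha(Y_R)$ of the Brouwer algebras $\P_\M(\omega^n)$, and these are arbitrary elements of those algebras, subject to no computability constraint; the relativised construction therefore produces a legitimate structure. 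The one genuinely delicate point is the bookkeeping of the first paragraph: arranging the base mass problems so that $X$-computability propagates uniformly through $C(\cdot)$, the finitary meets and joins, and the map $\alpha$, without destroying the antichain and sub-implicative semilattice structure on which injectivity of $\alpha$ and the Brouwer-algebra identities rely.
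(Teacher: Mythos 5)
Your overall strategy --- relativise the construction of Theorem \ref{thm-dec-fram} to an oracle coding $\gK$, and observe that function symbols are the only obstruction to doing so --- is the same as the paper's. But the mechanism you use to give the reduction functionals access to the oracle does not work. You arrange that $f_i \geq_T X$ for all $i$ and replace $\overline{\D}$ by $\overline{\D} \cap \{g \mid g \geq_T X\}$, and then claim that every element of the resulting mass problems \emph{uniformly} computes $X$. That claim is false whenever $X$ is noncomputable: a set of the form $\{g \mid g \geq_T f\}$ (and likewise any union of such upper cones, or any Turing-upward-closed set) Medvedev-computes a singleton $\{h\}$ only when $h$ is computable. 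Indeed, if $\Phi(g) = h$ for all $g$ in an upward-closed set $\C$, then every finite string $\sigma$ extends to an element of $\C$ (e.g.\ $\sigma \conc g_0$ for any $g_0 \in \C$), so whenever $\Phi(\sigma)(n){\downarrow}$ the output must equal $h(n)$; searching over $\sigma$ then computes $h$. So membership of every element of your modified $C(\{f_i \mid i \neq j\}) \cup \overline{\D}$ in the cone above $X$ gives only \emph{pointwise} (Muchnik-style) access to $X$, and the Turing functionals in the inductive claim --- which must be single, fixed functionals working on all inputs --- cannot ``consult $X$'' as you assert. This is precisely the uniformity issue that distinguishes the Medvedev from the Muchnik lattice, and it is fatal to the argument as written.

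The paper's proof avoids this by attaching the oracle as an explicit join with a \emph{singleton}: it sets $\beta(Y) = \alpha(Y) \oplus \{h\}$ (and likewise builds $\oplus\, h$ into the definition of $\A$ and takes $\D = \{g \mid \exists i\, (g \leq_T f_i \oplus h)\}$, with the $f_i$ an antichain over $h$). Every element of $\beta(Y)$ is then literally of the form $f \oplus h$, so $h$ can be read off uniformly and the functionals in the induction may genuinely use it. This change of shape forces a second adjustment you do not need in your (non-working) version but would need in a correct one: injectivity of $\beta$ can no longer be obtained from Lemma \ref{lem-inf-meet-upw} alone, and the paper proves a relativised variant (Lemma \ref{lem-inf-meet-upw-rel}, with hypothesis $\bigcup\{i \conc \E_i\} \leq_\M \C \oplus \{h\}$) to re-establish it. Your closing discussion of why function symbols must be excluded is correct and is a useful supplement to the paper, which leaves that point implicit.
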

\begin{proof}
Let $h$ be such that $\gK$ is $h$-decidable. We relativise the construction in the proof of Theorem \ref{thm-dec-fram} to $h$. We let all definitions be as in that proof, except where mentioned otherwise.
This time we let $f_i$ be an antichain over $h$, i.e.\ for all $i \not= j$ we have $f_i \oplus h \not\geq_T f_j$. We change the definition of $\D$ into $\{g \mid \exists i (g \leq_T f_i \oplus h)\}$
We let
\begin{align*}
\A = \{\left(k_1 \conc k_2 \conc \left(C(\{f_i \mid i \not\in \{k_1,k_2\}\}) \cup \overline{\D}\right)\right) \oplus h \mid \text{$t_{k_1}$ and $t_{k_2}$ are incomparable}\}
\end{align*}
if $T$ is not a chain, and let $\A = \overline{\D} \oplus h$ otherwise.
We let $\beta(Y) = \alpha(Y) \oplus \{h\}$ for all $Y \in \U$. Then $\beta$ is still injective. Indeed, let us assume $\beta(Y) \leq_\M \beta(Z)$; we will show that $Y \supseteq Z$. By applying Lemma \ref{lem-inf-meet-upw-rel} below we see that for every $j$ with $t_j \in Z$ there exists a $k$ with $t_k \in Y$ such that either $\left(C(\{f_i \mid i \not= k\}) \cup \overline{\D}\right) \oplus \{h\}  \leq_\M \left(C(\{f_i \mid i \not= j\}) \cup \overline{\D}\right) \oplus \{h\}$ or $\A \leq_\M \left(C(\{f_i \mid i \not= j\}) \cup \overline{\D}\right) \oplus \{h\}$. If the first holds, let us assume that $k \not= j$; we will derive a contradiction from this. Then $f_k \in C(\{f_i \mid i \not= j\}) \cup \overline{\D}$ and therefore $f_k \oplus h \in C(\{f_i \mid i \not= k\}) \cup \overline{\D}$ since this set is upwards closed. However, we know that the $f_i$ form an antichain over $h$ in the Turing degrees, which is a contradiction. So, $k = j$ and therefore $t_j \in Y$.

In the second case, we have that
\[C(\{f_i \mid i \not\in \{k_1,k_2\}) \cup \overline{\D} \leq_\M \left(C(\{f_i \mid i \not= j\}) \cup \overline{\D}\right) \oplus \{h\}\]
for some $k_1,k_2 \in \omega$ for which $t_{k_1}$ and $t_{k_2}$ are incomparable. Without loss of generality, we may assume that $k_1 \not= j$. Then, in the same way as above, we see that $f_{k_1} \oplus h \in C(\{f_i \mid i \not\in \{k_1,k_2\}) \cup \overline{\D}$ which is again a contradiction.

We let $\B_{-1} = \beta(T)$ and we let $\B_i = \beta(Z_i)$, where $Z_i$ is the set of nodes where $i$ is in the domain of $\gK$.
We claim: for every formula $\phi(x_1,\dots,x_n)$ and every sequence $a_1,\dots,a_n$,
\[\llbracket \phi(x_1,\dots,x_n) \rrbracket_{\langle a_1,\dots,a_n\rangle} \equiv_\M \beta(Y),\]
where $Y$ is exactly the set of nodes where $a_1,\dots,a_n$ are all in the domain and $\phi(a_1,\dots,a_n)$ holds in the Kripke model $\gK$. The proof is the same as before, except that this time we use that all mass problems we deal with are above $\B_{-1} = \alpha(T) \oplus \{h\}$ and hence uniformly compute $h$. Thus, we can still decide all the properties about $\gK$ which we need during the proof.
\end{proof}

\begin{lem}\label{lem-inf-meet-upw-rel}
Let $\C \subseteq \omega^\omega$ be non-empty and upwards closed under Turing reducibility, let $\E_i \subseteq \omega^\omega$, let $h \in \omega^\omega$ and let $\bigcup \{i \conc \E_i\} \leq_\M \C \oplus \{h\}$. Then there is an $i \in \omega$ such that $\E_i \leq_\M \C$.
\end{lem}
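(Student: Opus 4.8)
The plan is to relativise the proof of Lemma~\ref{lem-inf-meet-upw}, isolating the single place where the oracle $h$ intervenes. First I would fix a partial Turing functional $\Phi_e$ witnessing $\bigcup\{i \conc \E_i\} \leq_\M \C \oplus \{h\}$, so that $\Phi_e(x)$ is a total element of $\bigcup\{i \conc \E_i\}$ for every $x \in \C \oplus \{h\}$. As $\C$ is non-empty, fix $c \in \C$; then $c \oplus h \in \C \oplus \{h\}$, so $\Phi_e(c \oplus h)(0){\downarrow}$. Let $\tau$ be the least initial segment of $c \oplus h$ on which this first value converges, write $\tau = \sigma \oplus \rho$ with $\sigma \prec c$ and $\rho \prec h$ its even and odd parts, and set $i = \Phi_e(\tau)(0)$. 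This $i$ will be the required index.

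Next I would build the reduction witnessing $\E_i \leq_\M \C$. Given $g \in \C$, form $\sigma \conc g$; since $\sigma \conc g \geq_T g \in \C$ and $\C$ is upwards closed, $\sigma \conc g \in \C$. The key observation is that $(\sigma \conc g) \oplus h$ extends $\tau$: its even part $\sigma \conc g$ extends $\sigma$, while its odd part is exactly $h$, which extends $\rho \prec h$. Hence $(\sigma \conc g) \oplus h \in \C \oplus \{h\}$, and by monotonicity of the use $\Phi_e\big((\sigma \conc g) \oplus h\big)(0) = \Phi_e(\tau)(0) = i$; therefore $\Phi_e\big((\sigma \conc g) \oplus h\big) \in i \conc \E_i$, and deleting the leading entry produces an element of $\E_i$ uniformly in $g$. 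This is the whole of the argument except for the role of $h$.

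The main obstacle is precisely that this reduction consults $h$ on the odd coordinates of its input, so literally it computes an element of $\E_i$ from $g \oplus h$ rather than from $g$ alone. Removing this last dependence on $h$ is the heart of the statement, and the mechanism I would try is to push all use of $h$ into the finite prefix: the index $i$ depends only on $\tau$, hence on the finite string $\rho \prec h$, and the inclusion $\C \oplus \{h\} \subseteq \C$ — valid because $\C$ is upwards closed and $g \oplus h \geq_T g$ — is what should allow the oracle $\C \oplus \{h\}$ to be traded for $\C$ in the final step. Pinning down exactly how prefixing by $\sigma$ interacts with this inclusion so that $\Phi_e$ still lands in $i \conc \E_i$ without reading $h$ beyond $\rho$ is where I expect essentially all of the difficulty to lie; once it is settled, totality and uniformity are routine, and Lemma~\ref{lem-inf-meet-upw} falls out as the case $h = \mathbf{0}$.
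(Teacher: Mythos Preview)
Your first two paragraphs are essentially the paper's argument, and they are correct: from a witness $\Phi_e$ and a use-bounded prefix you extract an index $i$ and show, uniformly in $g \in \C$, that $\Phi_e((\sigma \conc g) \oplus h) \in i \conc \E_i$. This establishes $\E_i \leq_\M \C \oplus \{h\}$, which is exactly what the paper proves and exactly what is used in Theorem~\ref{thm-dec-fram-rel}.

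The difficulty you wrestle with in your third paragraph is not a gap in your argument but a typo in the lemma: the conclusion should read $\E_i \leq_\M \C \oplus \{h\}$, not $\E_i \leq_\M \C$. The stronger statement is simply false. Take $\C = \omega^\omega$, let $h$ be any non-computable function, and set $\E_0 = \{h\}$, $\E_i = \emptyset$ for $i > 0$. Then $g \oplus h \mapsto 0 \conc h$ witnesses $\bigcup\{i \conc \E_i\} \leq_\M \C \oplus \{h\}$, yet $\E_0 = \{h\} \not\leq_\M \omega^\omega$ and every other $\E_i$ is empty. Your proposed escape via the inclusion $\C \oplus \{h\} \subseteq \C$ points the wrong way: that inclusion gives $\C \leq_\M \C \oplus \{h\}$, whereas eliminating $h$ from the reduction would require $\C \oplus \{h\} \leq_\M \C$. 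Likewise, there is no reason $\Phi_e$ should consult only a finite amount of $h$; in the counterexample it must read all of it. So stop after your second paragraph: you have the intended lemma, and nothing more is either needed or true.
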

\begin{proof}
Let $\Phi_e(\C) \subseteq \bigcup \{i \conc \E_i\}$. Let $\sigma$ be the least string such that $\Phi_e(\sigma \oplus h)(0){\downarrow}$. Such a string must exist, because $\C$ is non-empty. Let $i = \Phi_e(\sigma \oplus h)(0)$. Then:
\[\C \oplus h \geq_\M (\sigma \conc \C) \oplus h \geq_\M \E_i,\]
as desired.
\end{proof}

We will now use Theorem \ref{thm-dec-fram-rel} to show that we can refute the formulas discussed in section \ref{sec-theory}.

\begin{prop}\label{prop-refute-1}
There is an interval $[\hiB,\A]_{\P_\M}$ and a structure $\gM$ in $[\hiB,\A]_{\P_\M}$ such that $\gM$ refutes the formula
\[\forall x,y,z (x = y \vee x = z \vee y = z) \wedge \forall z(S(z) \vee R) \to \forall z(S(z)) \vee R\]
from Proposition \ref{prop-cd}.
\end{prop}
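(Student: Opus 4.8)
The plan is to apply Theorem~\ref{thm-dec-fram-rel} to the Kripke countermodel already exhibited in the proof of Proposition~\ref{prop-cd}. Recall that this is the Kripke model $\gK$ built over the three-element frame with least node $0$ and two incomparable nodes $a,b$ above it, where $\gK_0$ has universe $\{1\}$, both $\gK_a$ and $\gK_b$ have universe $\{1,2\}$, $S(1)$ is true at every node, $S(2)$ is true only at $a$, and $R$ is true only at $b$. As verified there, $\gK$ refutes the formula in the statement: the antecedent $\forall x,y,z(x=y\vee x=z\vee y=z)\wedge\forall z(S(z)\vee R)$ is forced at $0$ while $\forall z(S(z))\vee R$ is not. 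In particular this formula does not belong to $\Th(\gK)$.

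Next one checks that the hypotheses of Theorem~\ref{thm-dec-fram-rel} hold trivially for $\gK$: the frame is finite, hence contains no infinite ascending chain, and the language of Proposition~\ref{prop-cd} — a nullary relation $R$, a unary relation $S$, and equality — contains no function symbols. (If one prefers, one may instead invoke Theorem~\ref{thm-dec-fram}, since the finite model $\gK$ is patently decidable.) Theorem~\ref{thm-dec-fram-rel} therefore provides an interval $[\hiB,\A]_{\P_\M}$ together with a structure $\gM$ in $[\hiB,\A]_{\P_\M}$ whose theory equals $\Th(\gK)$.

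Since the displayed formula is not in $\Th(\gK)=\Th(\gM)$, it is not satisfied in $\gM$; that is, $\gM$ refutes it, which is exactly what is claimed. There is no genuine obstacle here: the whole substance of the argument is packaged inside Theorem~\ref{thm-dec-fram-rel}, and the only points one must check by hand — that the frame of $\gK$ admits no infinite ascending chain and that $\gK$ really fails the formula — are both immediate from the construction in the proof of Proposition~\ref{prop-cd}.
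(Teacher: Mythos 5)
Your proposal is correct and follows exactly the paper's own argument: the paper likewise takes the finite Kripke countermodel from the proof of Proposition~\ref{prop-cd} and applies Theorem~\ref{thm-dec-fram-rel} to realise its theory in an interval $[\hiB,\A]_{\P_\M}$. The extra checks you spell out (no function symbols, no infinite ascending chains since the frame is finite) are exactly the hypotheses the paper leaves implicit.
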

\begin{proof}
As shown in the proof of Proposition \ref{prop-cd} there is a finite Kripke frame refuting the formula. Now apply Theorem \ref{thm-dec-fram-rel}.
\end{proof}

\begin{prop}\label{prop-refute-2}
There is an interval $[\hiB,\A]_{\P_\M}$ and a structure $\gM$ in $[\hiB,\A]_{\P_\M}$ such that $\gM$ refutes the formula 
\[(\forall x (S(x) \vee \neg S(x))  \wedge \neg \forall x (\neg S(x))) \to \exists x (\neg\neg S(x)).\]
from Proposition \ref{prop-cd-2}.
\end{prop}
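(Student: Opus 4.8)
The plan is to reuse the Kripke countermodel already exhibited in the proof of Proposition~\ref{prop-cd-2} and transport it into an interval of the hyperdoctrine of mass problems via Theorem~\ref{thm-dec-fram-rel}, in exactly the same manner as Proposition~\ref{prop-refute-1} was obtained from Proposition~\ref{prop-cd}.

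First I would recall the countermodel constructed there: the Kripke model $\gK$ over the two-node frame $0 \leq a$, with $\gK_0$ having universe $\{1\}$, $\gK_a$ having universe $\{1,2\}$, with $S(1)$ false at both nodes and $S(2)$ true only at $a$, which was checked to refute the displayed formula. I would then observe that $\gK$ meets every hypothesis of Theorem~\ref{thm-dec-fram-rel}: the underlying frame is finite, hence computable and (vacuously) without infinite ascending chains; the universe at each node is finite, hence computable; the forcing relation is computable because the whole model is finite; and the language consists only of the unary relation together with equality, so it contains no function symbols. Hence Theorem~\ref{thm-dec-fram-rel} applies and produces an interval $[\hiB,\A]_{\P_\M}$ and a structure $\gM$ in it with $\Th(\gM) = \Th(\gK)$.

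Since the formula
\[(\forall x (S(x) \vee \neg S(x)) \wedge \neg \forall x (\neg S(x))) \to \exists x (\neg\neg S(x))\]
is not in $\Th(\gK)$, it is not in $\Th(\gM)$ either, so $\gM$ refutes it, which is the conclusion. Note that we do not even need the weaker ``no universal quantifiers'' clause of Theorem~\ref{thm-dec-fram-rel} here, since the frame is finite; this matters because the refuted formula carries an existential quantifier in its consequent, so the full strength of the theorem (not just its quantifier-restricted fragment) is what we are invoking.

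I do not expect any real obstacle: the entire content of the argument is the bookkeeping verification that the countermodel of Proposition~\ref{prop-cd-2} genuinely satisfies the definition of a decidable Kripke model on a frame without infinite ascending chains, and this is immediate from finiteness. The only mild subtlety is making sure the language fits the hypothesis of the relativised theorem (no function symbols), which it does.
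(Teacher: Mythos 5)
Your proposal is correct and is essentially identical to the paper's own proof, which likewise just cites the finite Kripke countermodel from Proposition \ref{prop-cd-2} and applies Theorem \ref{thm-dec-fram-rel}. (One minor quibble: the reason the quantifier-restricted clause of that theorem would not suffice is the universal quantifiers in the antecedent rather than the existential in the consequent, but since the frame is finite the full-strength hypothesis is met anyway and your conclusion stands.)
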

\begin{proof}
In the proof of Proposition \ref{prop-cd-2} we showed that there is a finite Kripke frame refuting the given formula. So, the claim follows from Theorem \ref{thm-dec-fram-rel}.
\end{proof}

Thus, moving to the more general intervals $[\hiB,\A]_{\P_\M}$ did allow us to refute more formulas. Let us next note that Theorem \ref{thm-not-iqc} really depends on the fact that we chose the language of arithmetic to contain function symbols.

\begin{prop}
Let $\Sigma$ be the language of arithmetic, but formulated with relations instead of with function symbols. Let $T$ be derivable in $\mathrm{PA}$ and let $\chi$ be a $\Pi^0_1$-sentence or $\Sigma^0_1$-sentence which is not derivable in $\mathrm{PA}$.
Then there is an interval $[\hiB,\A]_{\P_\M}$ and a structure $\gM$ in $[\hiB,\A]_{\P_{\M}}$ refuting $\bigwedge T \to \chi$.
\end{prop}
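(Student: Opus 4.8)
The plan is to exploit the essential difference between this setting and that of Theorem~\ref{thm-not-iqc}: once the language is purely relational, the interpretation of a relation symbol may be chosen to be an arbitrary element of $\P_\M(M^n)$, with no constraint of being a morphism of the base category. This lets us build a structure $\gM$ which is, inside the hyperdoctrine, a verbatim copy of a suitable classical model, so that every formula is interpreted by either the least or the greatest degree of $\M$ according to its classical truth value.

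First I would note that $\bigwedge T \to \chi$ is not classically valid: were it provable in classical first-order logic with equality, then since $\mathrm{PA} \vdash \bigwedge T$ we would get $\mathrm{PA} \vdash \chi$, contradicting the hypothesis on $\chi$. (This remark also shows that the hypothesis that $\chi$ is $\Pi^0_1$ or $\Sigma^0_1$ is not actually needed here; it is stated only for symmetry with Theorem~\ref{thm-not-iqc}.) By classical completeness and the L\"owenheim--Skolem theorem there is therefore a classical structure $\mathcal{N}$ for $\Sigma$ with $\mathcal{N} \models \bigwedge T$ and $\mathcal{N} \models \neg\chi$, whose universe $M$ is countable, hence either finite or equal to $\omega$; in either case $M$ is an object of the base category.

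Next I would take the interval $[\hiB,\A]_{\P_\M}$ with $\A = \emptyset$ (the greatest degree of $\M$) and $\B_i$ a fixed representative of $0_\M$, say $\{0^\omega\}$, for every $i \geq -1$; the conditions of Definition~\ref{def-interval} are immediate, and the resulting hyperdoctrine is in fact just $\P_\M$. I would then let $\gM$ have universe $M$, and, for each $n$-ary relation symbol $R$ of $\Sigma$, set $\llbracket R \rrbracket_{\langle a_1,\dots,a_n\rangle}$ equal to $0_\M$ if $\mathcal{N} \models R(a_1,\dots,a_n)$ and to $1_\M = \emptyset$ otherwise (there being no function symbols to interpret). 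The heart of the argument is then a routine induction on $\phi$, parallel to but simpler than Lemma~\ref{lem-q-0}: for every $\phi(x_1,\dots,x_n)$ and all $a_1,\dots,a_n \in M$, $\llbracket \phi(x_1,\dots,x_n)\rrbracket_{\langle a_1,\dots,a_n\rangle}$ equals $0_\M$ if $\mathcal{N} \models \phi(a_1,\dots,a_n)$ and equals $1_\M$ otherwise. The atomic cases follow from the choice of the $\llbracket R\rrbracket$ and from Lemma~\ref{lem-eq-ival}; the propositional connectives reduce to elementary identities such as $0_\M \oplus 0_\M = 0_\M$, $1_\M \oplus z = 1_\M$, $0_\M \otimes z = 0_\M$, $0_\M \to_\M 1_\M = 1_\M$ and $1_\M \to_\M z = 0_\M$; and for the quantifiers one uses the explicit descriptions from Remark~\ref{rem-proj}, noting that $\bigoplus_b f_b$ is computable when each $f_b$ is, while the relevant union or join collapses to $\emptyset$ as soon as one component mass problem is $\emptyset$. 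Unlike in Lemmas~\ref{lem-q-0} and~\ref{lem-q-1}, no bound on quantifier complexity is needed, precisely because every relation has been pinned to $\{0_\M,1_\M\}$.

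Applying the induction to the sentences $\bigwedge T$ and $\chi$ gives $\llbracket\bigwedge T\rrbracket_\gM = 0_\M$ and $\llbracket\chi\rrbracket_\gM = 1_\M$, whence $\llbracket\bigwedge T \to \chi\rrbracket_\gM = 0_\M \to_\M 1_\M = 1_\M \neq 0_\M$, i.e.\ $\bigwedge T \to \chi \notin \Th(\gM)$, as required. The one place demanding care is checking that the induction really passes through arbitrary quantifier prefixes; the temptation to simply cite Lemma~\ref{lem-q-1} should be resisted, since it is stated only for relation-free languages, but once the relations are fixed classically this verification is routine.
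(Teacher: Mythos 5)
There is a genuine gap in the central induction. Your claim that every formula is interpreted by $0_\M$ or $1_\M$ breaks down at the universal quantifier as soon as the classical model $\mathcal{N}$ is not decidable. Concretely, with $\llbracket R \rrbracket_b$ equal to $\omega^\omega$ or $\emptyset$ according to whether $\mathcal{N} \models R(b)$, one computes $\llbracket R(y) \vee \neg R(y)\rrbracket_b = i_b \conc \omega^\omega$ where $i_b \in \{0,1\}$ records the classical truth value; each component is indeed $\equiv_\M 0_\M$, but \emph{not uniformly in $b$}, and by Remark \ref{rem-proj} every element of $\llbracket \forall y (R(y) \vee \neg R(y))\rrbracket$ is of the form $\bigoplus_b f_b$ with $f_b \in i_b \conc \omega^\omega$ and hence computes the characteristic function of $R^{\mathcal{N}}$. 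So this interpretation has the Medvedev degree of $\{\chi_{R^{\mathcal{N}}}\}$, which is strictly above $0_\M$ whenever $R^{\mathcal{N}}$ is noncomputable. You cannot escape this by choosing $\mathcal{N}$ decidable: for the finite $T$ of Theorem \ref{thm-not-iqc} and $\chi = \mathrm{Con}(\mathrm{PA})$, every model of $\bigwedge T \wedge \neg\chi$ is nonstandard, and the Tennenbaum argument (which only uses the graphs of $+$ and $\cdot$, so it applies verbatim in the relational formulation) shows it has no computable presentation. This is precisely why the paper does not work in $\P_\M$ directly but instead regards $\mathcal{N}$ as a one-point Kripke model and invokes Theorem \ref{thm-dec-fram-rel}: the relativised construction there builds an oracle $h$ deciding $\mathcal{N}$ into the interval $[\hiB,\A]_{\P_\M}$, so that all the case distinctions in the induction become uniformly available. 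The interval in the statement is doing real work, not just normalisation.

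That said, your structure $\gM$ does refute $\bigwedge T \to \chi$, only for a weaker reason than the one you give: the map sending a mass problem to the truth value of ``is nonempty'' commutes with $\oplus$, $\otimes$, $\to$, the adjoints $\forall_\alpha$, $\exists_\alpha$, substitution and equality, so an induction establishes only that $\llbracket \phi \rrbracket_{\langle a_1,\dots,a_n\rangle} \neq \emptyset$ if and only if $\mathcal{N} \models \phi(a_1,\dots,a_n)$. Hence $\llbracket \bigwedge T \rrbracket \neq \emptyset$ and $\llbracket \chi \rrbracket = \emptyset$, giving $\llbracket \bigwedge T \to \chi \rrbracket = \emptyset \neq 0_\M$. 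If you replace your two-valued induction by this nonemptiness invariant, your argument becomes a correct and genuinely more elementary alternative to the paper's appeal to Theorem \ref{thm-dec-fram-rel}; as written, however, the stated induction hypothesis is false and the proof does not go through.
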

\begin{proof}
Let $\gK$ be a classical model refuting $\bigwedge T \to \chi$, which can be seen as a Kripke model on a frame consisting of one point. Now apply Theorem \ref{thm-dec-fram-rel}.
\end{proof}

Finally, let us consider the schema $\forall x \neg\neg\phi(x) \to \neg\neg\forall x \phi(x)$, called \emph{Double Negation Shift (DNS)}. It is known that this schema characterises exactly the Kripke frames for which every node is below a maximal node (see Gabbay \cite{gabbay-1981}), so in particular it holds in every Kripke frame without infinite chains. We will show that we can refute it in an interval of the hyperdoctrine of mass problems, even though Theorem \ref{thm-dec-fram-rel} does not apply.

\begin{prop}
Let $\Sigma$ be the language containing one unary relation $R$. There is an interval $[\hiB,\A]_{\P_\M}$ and a structure $\gM$ in $[\hiB,\A]_{\P_\M}$ such that $\gM$ refutes $\forall x \neg\neg R(x) \to \neg\neg\forall x R(x)$.
\end{prop}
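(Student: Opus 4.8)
The plan is to re-use the standard Kripke countermodel for Double Negation Shift and then encode it in an interval by hand, adapting the machinery of Theorems~\ref{thm-dec-fram} and~\ref{thm-dec-fram-rel} but being forced to deviate from them precisely because the relevant frame has an infinite ascending chain. Concretely, let $\gK$ be the Kripke model whose frame is $(\omega,\leq)$, with domain $\omega$ at every node, and with $R(n)$ declared true at node $k$ iff $n<k$. One checks classically that $\gK$ forces $\forall x\,\neg\neg R(x)$ at the root (given $n$ and a node $k$, pass to $k'=\max(k,n+1)$, where $R(n)$ holds) but does \emph{not} force $\neg\neg\forall x R(x)$ at the root, since $\forall x R(x)$ holds at no node ($R(k)$ fails at $k$). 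I would then set up the usual ingredients from those proofs: an antichain $(f_i)_{i\in\omega}$ in the Turing degrees, $\D=\{g\mid\exists i\,(g\leq_T f_i)\}$, the mass problems $P_j=C(\{f_i\mid i\neq j\})\cup\overline{\D}$ representing the nodes $t_j$, an interval $[\hiB,\A]_{\P_\M}$ with $\B_{-1}=\alpha(T)$ (and $\B_i=\alpha(T)$, the frame having a constant domain), and the structure $\gM$ with $M=\omega$ and $\llbracket R(x)\rrbracket_n=\alpha(\{k\mid n<k\})$; the choice of the top $\A$ (and possibly of the node representatives) is the part that has to be adjusted, as explained below.

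The first thing I would prove is that $\forall x\,\neg\neg R(x)$ is satisfied in $\gM$. The formula $\neg\neg R(x)$ is built only from implication and $\bot$, and the implication case of the inductive claim in the proof of Theorem~\ref{thm-dec-fram} does \emph{not} use the assumption that the frame has no infinite ascending chains (that assumption is used only in the converse direction of the universal-quantifier case). Hence $\llbracket\neg\neg R(x)\rrbracket_n\equiv_\M\alpha(\{k\mid k\Vdash\neg\neg R(n)\})=\alpha(T)=\B_{-1}$, uniformly in $n$, and the ascending-chain-free part of the $\forall$-case then gives $\llbracket\forall x\,\neg\neg R(x)\rrbracket\equiv_\M\bigoplus_n\alpha(T)\equiv_\M\B_{-1}$. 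Thus $\forall x\,\neg\neg R(x)\in\Th(\gM)$, and by soundness (Proposition~\ref{prop-sound}) the Double Negation Shift instance is refuted in $\gM$ as soon as $\llbracket\neg\neg\forall x R(x)\rrbracket\not\equiv_\M\B_{-1}$.

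\textbf{The main obstacle} is therefore the computation of $\llbracket\neg\neg\forall x R(x)\rrbracket$, and here the failure of the ascending chain condition is essential in two opposite ways. On the one hand, the ascending-chain-free part of the $\forall$-case still yields $\llbracket\forall x R(x)\rrbracket\equiv_\M\bigoplus_n\alpha((Z_n\to U_{n+1})\cap Z)$, but the converse direction of that case fails here, so this value is \emph{not} the top $\alpha(\emptyset)$; on the other hand, if one uses the top $\A=\overline{\D}$ from the chain case of Theorem~\ref{thm-dec-fram} verbatim, then a short antichain argument shows that $\llbracket\forall x R(x)\rrbracket$ contains, for every $m$, an element Turing-equivalent to $f_m$, which forces its negation up to $\overline{\D}$ and hence collapses $\llbracket\neg\neg\forall x R(x)\rrbracket$ back to $\B_{-1}$ — i.e.\ DNS would hold. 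The real work is to choose the top element $\A$ (and, if needed, the node representatives) so that this collapse is blocked: the interpretation of $\forall x R(x)$ must be kept genuinely intermediate even after double negation, and one establishes $\llbracket\neg\neg\forall x R(x)\rrbracket\equiv_\M\A$ by showing that any putative Turing functional witnessing $\llbracket\neg\neg\forall x R(x)\rrbracket\leq_\M\B_{-1}$ can be diagonalised against: feeding it a suitably constructed potential solution forces the induced ``decoding'' of a member of $\bigoplus_n\alpha(\dots)$ to climb the chain $0<1<2<\cdots$ forever, exactly mirroring the non-terminating procedure in the converse $\forall$-direction of the proof of Theorem~\ref{thm-dec-fram}. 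Carrying out this diagonalisation uniformly against all functionals, and checking that the modified $\A$ still keeps $\gM$ inside a legitimate interval $[\hiB,\A]_{\P_\M}$ (in particular $\B_{-1}\not\geq_\M\A$), is where the substance of the argument lies.
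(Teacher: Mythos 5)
Your setup matches the paper's: the Kripke countermodel on the frame $(\omega,\leq)$, the encoding via the machinery of Theorem \ref{thm-dec-fram}, the observation that $\llbracket \forall x\,\neg\neg R(x)\rrbracket \equiv_\M \B_{-1}$ follows from the parts of that proof that do not need the ascending-chain condition, and — importantly — the correct diagnosis that with the top element $\A=\overline{\D}$ of the chain case the construction backfires (since $\llbracket\forall x\,R(x)\rrbracket$ then contains elements of degree $f_m$ for every $m$, forcing $\llbracket\neg\forall x\,R(x)\rrbracket\equiv_\M\overline{\D}$ and hence validating DNS). That diagnosis is exactly the point of the paper's proof.

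However, there is a genuine gap: you announce that ``the real work is to choose the top element $\A$'' and then stop, so the one idea that actually carries the proof is missing. The paper's choice is
\[\A=\bigcup\left\{\left(C(\{f_i\mid i\notin X\})\cup\overline{\D}\right)\oplus X \;\middle|\; X\subseteq\omega \text{ infinite}\right\},\]
and with it one must re-verify that $\alpha$ is still injective and that the propositional clauses of the inductive claim still go through. Moreover, the final step is not the diagonalisation you sketch (refuting each functional purporting to witness $\llbracket\neg\neg\forall x\,R(x)\rrbracket\leq_\M\B_{-1}$); it is a direct, uniform, positive reduction: every $g\in\llbracket\forall x\,R(x)\rrbracket\equiv_\M\bigoplus_m\B_{m+1}$ computes a strictly increasing sequence $k_0<k_1<\cdots$ with $g\in C(\{f_i\mid i\neq k_j\})\cup\overline{\D}$ for all $j$, whence $g\oplus X\in\A$ for $X=\{k_j\mid j\in\omega\}$. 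This gives $\llbracket\forall x\,R(x)\rrbracket\geq_\M\A$, so $\llbracket\neg\forall x\,R(x)\rrbracket\equiv_\M\B_{-1}$ and $\llbracket\neg\neg\forall x\,R(x)\rrbracket\equiv_\M\A\not\equiv_\M\B_{-1}$, refuting DNS. Without an explicit $\A$ and this (or some) concrete reduction, the proposal identifies the obstacle but does not overcome it.
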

\begin{proof}
We let $\gK$ be the Kripke model based on the Kripke frame $(\omega,<)$, where $n$ is in the domain at $m$ if and only if $m \geq n$, and $R(n)$ holds at $m$ if and only if $m > n$.
Let everything be as in the proof of Theorem \ref{thm-dec-fram}, except we change the definition of $\A$ into:
\[\bigcup\left\{\left(C\left(\{f_i \mid i \not\in X\right\}) \cup \overline{\D}\right) \oplus X \mid X \in 2^\omega \text{ is infinite}\right\},\]
where by $X$ being infinite we mean that the subset $X \subseteq \omega$ represented by $X$ is infinite.
We claim: $\alpha$ is still injective under this modified definition of $\A$.
Indeed, assume that
\[\A \leq_\M C(\{f_i \mid i \not= j\}) \cup \overline{\D},\]
say through $\Phi_e$; we need to show that this still yields a contradiction.
Let $\sigma$ be the least string such that the right half of $\Phi_e(\sigma)$ has a $1$ at a position different from $j$, say at position $k$; such a $\sigma$ must exist since $\Phi_e(f_{j+1}) \in \A$. Then $\Phi_e(\sigma \conc f_{k}) \in C(\{f_i \mid i \not= k\}) \cup \overline{\D}$, which is a contradiction.

All the other parts of the proof of Theorem \ref{thm-dec-fram} now go through as long as we look at formulas not containing existential quantifiers. Since $\forall x \neg\neg R(x)$ is intuitionistically equivalent to $\neg \exists x \neg R(x)$, we therefore see that
\[\llbracket \forall x \neg\neg R(x) \rrbracket \equiv_\M \B_{-1}.\]
We claim: $\llbracket \neg\forall x (R(x)) \rrbracket \equiv_\M \B_{-1}$, which is enough to prove the proposition. Note that $\llbracket \forall x (R(x)) \rrbracket \equiv_\M \B_{-1} \oplus \bigoplus_{m \in \omega} (\B_m \to_\M \B_{m+1})$. By introducing new predicates $S_m$ which hold if and only if $m$ is in the domain and looking at $\llbracket S_m \to S_{m+1}\rrbracket$, we therefore get that 
$\llbracket \forall x (R(x)) \rrbracket \equiv_\M \bigoplus_{m \in \omega} \B_{m+1}$.

We claim that from every element $g \in \bigoplus_{m \in \omega} \B_{m+1}$ we can uniformly compute an element of $\A$. In fact, we show how to uniformly compute from $g$ a sequence $k_0 < k_1 < \dots$ such that $g  \in C(\{f_i \mid i \not= k_j\}) \cup \overline{\D}$ for every $j \in \omega$; then if we let $X = \{k_j \mid j \in \omega\}$ we have $g \oplus X \in \left(C(\{f_i \mid i \not\in X) \cup \overline{\D}\right) \oplus X \subseteq \A$.
For ease of notation let $k_{-1} = 0$. We show how to compute $k_{i+1}$ if $k_i$ is given.
There are two possibilities:
\begin{itemize}
\item The second bit of $g^{[k_i]}$ is $0$: take $k_{i+1}$ to be the first bit of $g^{[k_i]}$; then $k_{i+1} > k_i$ by the definition of $\B_{k_i+1}$.
\item The second bit of $g^{[k_i]}$ is $1$: then $g^{[k_i]}$ computes an element of $\A$ and therefore computes infinitely many $j$ such that 
$g^{[k_i]} \in C(\{f_i \mid i \not= j\}) \cup \overline{\D}$, so take $k_{i+1}$ to be such a $j$ which is greater than $k_i$.\qedhere
\end{itemize}
\end{proof}

We do not know how to combine the proof of the last Proposition with the proofs of Theorems \ref{thm-dec-fram} and \ref{thm-dec-fram-rel}, because it makes essential use of the fact that the formula is refuted in a model on a frame which is a chain, and of the fact that the subformulas containing universal quantifiers hold either everywhere or nowhere in this model. Table \ref{table-decidable} below summarises the positive results we know; however, this characterisation is not complete.

\begin{question}
For which theories $T$ is there an interval $[\hiB,\A]_{\P_\M}$ and a structure $\gM$ in $[\hiB,\A]_{\P_\M}$ such that the theory of $\gM$ is exactly $T$?
\end{question}

\begin{table}[H]
\begin{tabular}{cccc}
\toprule
Theorem & Language & Fragment & Kripke frame condition\\
\midrule
\ref{thm-dec-fram} & Arbitrary & Full & Decidable and no infinite chains\\
\ref{thm-dec-fram} & Arbitrary & Existential & Decidable\\
\ref{thm-dec-fram-rel} & No functions & Full & No infinite chains\\
\ref{thm-dec-fram-rel} & No functions & Existential & None\\
\bottomrule
\end{tabular}
\caption{Fragments of theories which have a structure in some interval $[\hiB,\A]_{\P_\M}$, given their satisfiability by a certain kind of Kripke frame.}
\label{table-decidable}
\end{table}

\providecommand{\bysame}{\leavevmode\hbox to3em{\hrulefill}\thinspace}
\providecommand{\MR}{\relax\ifhmode\unskip\space\fi MR }
\providecommand{\MRhref}[2]{%
  \href{http://www.ams.org/mathscinet-getitem?mr=#1}{#2}
}
\providecommand{\href}[2]{#2}


\begin{thebibliography}{10}

\bibitem{balbes-dwinger-1975}
R.~Balbes and P.~Dwinger, \emph{Distributive lattices}, University of Missouri
  Press, 1975.

\bibitem{basu-simpson-2014}
S.~S. Basu and S.~G. Simpson, \emph{Mass problems and intuitionistic higher-order
  logic}, submitted.

\bibitem{chagrov-zakharyaschev-1997}
A.~Chagrov and M.~Zakharyaschev, \emph{Modal logic}, Clarendon Press, 1997.

\bibitem{gabbay-1976}
D.~M. Gabbay, \emph{Properties of {H}eyting's predicate calculus with respect
  to r.e.\ models}, Journal of Symbolic Logic \textbf{41} (1976), no.~1,
  81--94.

\bibitem{gabbay-1981}
\bysame, \emph{Semantical investigations in {H}eyting's intuitionistic logic},
  Springer, 1981.

\bibitem{hinman-2012}
P.~G. Hinman, \emph{A survey of {M}u{\v c}nik and {M}edvedev degrees}, The
  Bulletin of Symbolic Logic \textbf{18} (2012), no.~2, 161--229.

\bibitem{ikn-1998-2}
H.~Ishihara, B.~Khoussainov, and A.~Nerode, \emph{Computable {K}ripke models
  and intermediate logics}, Information and Computation \textbf{143} (1998),
  205--230.

\bibitem{ikn-1998}
\bysame, \emph{Decidable {K}ripke models of intuitionistic theories}, Annals of
  Pure and Applied Logic \textbf{93} (1998), 115--123.

\bibitem{kleene-vesley-1965}
S.~C. Kleene and R.~E. Vesley, \emph{The foundations of intuitionistic
  mathematics}, Studies in Logic and the Foundations of Mathematics,
  North-Holland Publishing Company, 1965.

\bibitem{kolmogorov-1932}
A.~Kolmogorov, \emph{Zur {D}eutung der intuitionistischen {L}ogik},
  Mathematische Zeitschrift \textbf{35} (1932), no.~1, 58--65.

\bibitem{kolmogorov-1932-tr}
\bysame, \emph{On the interpretation of intuitionistic logic}, Selected works
  of A. N. Kolmogorov, Volume I: Mathematics and Mechanics (V.~M. Tikhomirov,
  ed.), Kluwer, 1991, pp.~151--158.

\bibitem{kuyper-2014}
R.~Kuyper, \emph{Natural factors of the {M}edvedev lattice capturing {IPC}}, to
  appear in Archive for Mathematical Logic, 2014.

\bibitem{lawvere-1969}
F.~W. Lawvere, \emph{Adjointness in foundations}, Dialectica \textbf{23}
  (1969), 281--296.

\bibitem{medvedev-1955}
Yu.~T. Medvedev, \emph{Degrees of difficulty of the mass problems}, Doklady
  Akademii Nauk SSSR, (NS) \textbf{104} (1955), no.~4, 501--504.

\bibitem{muchnik-1963}
A.~A. Muchnik, \emph{On strong and weak reducibilities of algorithmic
  problems}, Sibirskii Matematicheskii Zhurnal \textbf{4} (1963), 1328--1341.

\bibitem{odifreddi-1989}
P.~G. Odifreddi, \emph{Classical recursion theory}, Studies in Logic and the
  Foundations of Mathematics, vol. 125, North-Holland, 1989.

\bibitem{pitts-1989}
A.~M. Pitts, \emph{Notes on categorical logic}, Computer Laboratory, University
  of Cambridge, Lent Term, 1989.

\bibitem{pitts-2000}
\bysame, \emph{Categorical logic}, Logic and algebraic methods (S.~Abramsky,
  D.~M. Gabbay, and T.~S.~E. Maibaum, eds.), Handbook of Logic in Computer
  Science, vol.~5, Clarendon Press, 2000, pp.~39--128.

\bibitem{pitts-2002}
\bysame, \emph{Tripos theory in retrospect}, Mathematical Structures in
  Computer Science \textbf{12} (2002), 265--279.

\bibitem{skvortsova-1988}
E.~Z. Skvortsova, \emph{A faithful interpretation of the intuitionistic
  propositional calculus by means of an initial segment of the {M}edvedev
  lattice}, Sibirskii Matematicheskii Zhurnal \textbf{29} (1988), no.~1,
  171--178.

\bibitem{sorbi-1996}
A.~Sorbi, \emph{The {M}edvedev lattice of degrees of difficulty},
  Computability, Enumerability, Unsolvability: Directions in Recursion Theory
  (S.~B. Cooper, T.~A. Slaman, and S.~S. Wainer, eds.), London Mathematical
  Society Lecture Notes, vol. 224, Cambridge University Press, 1996,
  pp.~289--312.

\bibitem{tennenbaum-1959}
S.~Tennenbaum, \emph{Non-{A}rchimedian models for arithmetic}, Notices of the
  American Mathematical Society \textbf{6} (1959), 270.

\bibitem{troelstra-vandalen-1988}
A.~S. Troelstra and D.~van Dalen, \emph{Constructivism in mathematics}, vol.~1,
  North Holland, 1988.

\bibitem{vanoosten-2008}
J.~van Oosten, \emph{Realizability: an introduction to its categorical side},
  Studies in Logic and the Foundations of Mathematics, vol. 152, Elsevier,
  2008.

\end{thebibliography}
\end{document}